\documentclass[11pt]{amsart}
\usepackage{amsmath, amscd, amsthm, amssymb}
\usepackage{graphicx}
\usepackage{enumitem}

\usepackage{fullpage}

\numberwithin{equation}{section}

\def\C{{\mathbf C}}
\def\R{{\mathbf R}}
\def\Z{{\mathbf Z}}
\def\Q{{\mathbf Q}}

\def\A{{\mathbf A}}
\def\F{{\mathbf F}}

\def\O{{\mathcal O}}

\def\blank{\phantom{x}}

\newtheorem{theorem}{Theorem}[section]

\newtheorem{lemma}[theorem]{Lemma}

\newtheorem{proposition}[theorem]{Proposition}

\newtheorem{corollary}[theorem]{Corollary}

\theoremstyle{definition}
\newtheorem{definition}[theorem]{Definition}

\theoremstyle{remark}
\newtheorem{remark}[theorem]{Remark}


\DeclareMathOperator{\diag}{diag}

\DeclareMathOperator{\charf}{char}
\renewcommand{\(}{\left(}
\renewcommand{\)}{\right)}

\newcommand{\mm}[4]{\(\begin{smallmatrix} #1 & #2\\ #3 & #4\end{smallmatrix}\)}

\DeclareMathOperator{\tr}{tr}

\DeclareMathOperator{\Sp}{Sp}
\DeclareMathOperator{\GSp}{GSp}
\DeclareMathOperator{\PGSp}{PGSp}
\DeclareMathOperator{\GU}{GU}

\DeclareMathOperator{\SL}{SL}
\DeclareMathOperator{\GL}{GL}

\DeclareMathOperator{\iIm}{Im}

\DeclareMathOperator{\GG}{G}
\DeclareMathOperator{\WW}{W_{\Sp_6}}
\DeclareMathOperator{\GS}{\widetilde{G}}

\begin{document}
\title{The Spin $L$-function on $\GSp_6$ for Siegel modular forms}
\author{Aaron Pollack}
\address{Department of Mathematics\\ Stanford University\\ Stanford, CA USA}
\email{aaronjp@stanford.edu}

\begin{abstract}
We give a Rankin-Selberg integral representation for the Spin (degree eight) $L$-function on $\PGSp_6$ that applies to the cuspidal automorphic representations associated to Siegel modular forms.  If $\pi$ corresponds to a level one Siegel modular form $f$ of even weight, and if $f$ has a non-vanishing \emph{maximal} Fourier coefficient (defined below), then we deduce the functional equation and finiteness of poles of the completed Spin $L$-function $\Lambda(\pi,Spin,s)$ of $\pi$.
\end{abstract}

\subjclass[2010]{Primary 11F66, 11F46; Secondary 11F03, 22E55}

\maketitle

\section{Introduction}
\subsection{Main theorems} The purpose of this paper is to give a family of Rankin-Selberg integrals $I_B(\phi,s)$ for the Spin $L$-function of the cuspidal automorphic representations of $\GSp_6(\A)$ that correspond to Siegel modular forms.  The Rankin-Selberg integrals are indexed by quaternion algebras $B$ over $\Q$ that are ramified at the infinite place.

Denote by $\mathcal H_3$ the Siegel upper half-space of degree three.  $\mathcal H_3$ is the space of three-by-three complex symmetric matrices $Z$ whose imaginary part is positive definite:
\[\mathcal H_3 = \{Z = X+i Y \in M_3(\C): \,^t Z = Z, Y > 0\}.\]
Denote by $\GSp_6^+(\R)$ the subgroup of $\GSp_6(\R)$ whose elements have positive similitude.  Then $\GSp_6^+(\R)$ acts on $\mathcal H_3$ by the formula $g Z = (AZ+B)(CZ+D)^{-1}$ if $g = \mm{A}{B}{C}{D}$. Define $K_{\infty,\Sp_6} \simeq U(3)$ to be the subgroup of $\Sp_6(\R)$ that stabilizes $i \in \mathcal{H}_3$ for this action.  For $Z \in \mathcal H_3$ and $g \in \GSp_6^+(\R)$, define $j(g,Z) = \nu(g)^{-2}\det(CZ+D)$, where $\nu$ is the similitude. 

Suppose $r \geq 0$ is an integer, and $\pi$ is a cuspidal automorphic representation of $\GSp_6(\A)$.  
\begin{definition}\label{holAssumption} We say that $\pi$ is associated to a level one Siegel modular form of weight $2r$ if the following conditions are satisfied:
\begin{itemize}
\item There is a nonzero $\phi$ in the space of $\pi$ satisfying $\phi(gk_f k_\infty) = j(k_\infty,i)^{-2r}\phi(g)$ for all $g \in \GSp_6(\A)$, $k_f \in \prod_{p}{\GSp_6(\Z_p)}$, and $k_\infty \in K_{\infty,\Sp_6}$.
\item $\pi$ has trivial central character.
\item The function $f_\phi: \mathcal H_3 \rightarrow \C$ well-defined by the equality $f_\phi(g_\infty(i)) = \nu(g_\infty)^r j(g_\infty,i)^{2r}\phi(g_\infty)$ for all $g_\infty \in \GSp_6(\R)^+$ is a classical Siegel modular form of weight $2r$.
\end{itemize}
\end{definition}
See, for example, \cite[Section 4]{asSch} for more on the relationship between Siegel modular forms and their associated automorphic representations.  The content of the final condition is that $f_\phi$ is a holomorphic function on $\mathcal H_3$, and thus has a Fourier expansion of the form
\[f_\phi(Z) = \sum_{T > 0}{a_f(T)e^{2\pi i \tr(TZ)}}.\]
Here the sum ranges over all half-integral, symmetric, three-by-three matrices $T$ that are positive definite.

Via the even Clifford construction, half-integral, symmetric, three-by-three matrices $T$ correspond to pairs $(\Lambda, \{1,v_1, v_2, v_3\})$ consisting of orders $\Lambda$ in quaternion algebras over $\Q$ equipped with a particular type of nice ordered basis $\{1, v_1, v_2, v_3\}$ of $\Lambda$.  This correspondence goes back at least to Brzezinski \cite[Proposition 5.2]{brzezinski}, and more recently it has been studied and generalized by Gross-Lucianovic \cite{gL}. The condition that $T$ be positive (or negative) definite is exactly that the associated quaternion algebra $\Lambda \otimes \Q$ be ramified at infinity.  As an example, the identity matrix $1_3$ corresponds to the order $\Z + \Z i +\Z j + \Z k$ in Hamilton's quaternions, while the matrix
\[\left(\begin{array}{ccc} 1 & \frac{1}{2} & \frac{1}{2} \\ \frac{1}{2} & 1 & \frac{1}{2} \\ \frac{1}{2} & \frac{1}{2} & 1 \end{array} \right)\]
corresponds to Hurwitz's maximal order 
\[\left\{a + xi + yj + zk: a,x, y, z \in \Z \text{ or } a,x,y,z \in \Z + \frac{1}{2}\right\}.\]

A cuspidal representation $\pi$ that is associated to a level one Siegel modular form is spherical at all finite places, and thus has a Spin $L$-function $L(\pi,Spin,s)$, defined for $Re(s) >>0$ as an absolutely convergent Euler product over all the primes of $\Q$.  If $\pi$ is associated to Siegel modular form of weight $2r$, we define the completed Spin $L$-function $\Lambda(\pi,Spin,s)$ to be
\[\Lambda(\pi, Spin, s) = \Gamma_\C(s+r-2)\Gamma_\C(s+r-1)\Gamma_\C(s+r)\Gamma_\C(s+3r-3)L(\pi,Spin,s)\]
where, as usual, $\Gamma_\C(s) = 2(2\pi)^{-s} \Gamma(s)$.  Our main theorem is the following.
\begin{theorem}\label{MainThm} Suppose that $\pi$ is a cuspidal automorphic representation of $\GSp_6(\A)$ associated to a level one Siegel modular form $f$ of weight $2r$.  Suppose furthermore that $a_f(T) \neq 0$ for some Fourier coefficient $a_f(T)$ of $f$, such that $T$ corresponds to a \emph{maximal} order in some quaternion algebra over $\Q$ ramified at infinity.  Then $\Lambda(\pi,Spin,s)$ has meromorphic continuation in $s$ to the entire complex plane with at worst finitely many poles, and satisfies the functional equation $\Lambda(\pi,Spin,s) = \Lambda(\pi,Spin,1-s)$.\end{theorem}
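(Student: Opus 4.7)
The plan is to construct, for each quaternion algebra $B/\Q$ ramified at infinity, a global Rankin--Selberg integral
\[ I_B(\phi,s) = \int \phi(g)\, E_B(g,s)\, dg \]
built from $\phi\in \pi$ and a degenerate Eisenstein series $E_B(g,s)$ on an auxiliary reductive group $\GG_B$ attached to $B$ (obtained via the even Clifford construction, which naturally relates ternary quadratic data to quaternion orders). The strategy is then to unfold $I_B$ so as to identify it, up to a nonzero constant and finitely many elementary factors, with $\Lambda(\pi,Spin,s)$, and to transfer the analytic properties of $E_B$ to $\Lambda(\pi,Spin,s)$.

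First I would unfold $I_B(\phi,s)$. Writing $E_B$ as a sum over $P_B(\Q)\backslash \GG_B(\Q)$ and using cuspidality of $\phi$ should reduce the sum to a single open orbit. Further Fourier expansion of $\phi$ along the unipotent radical of the Siegel parabolic of $\GSp_6$ should then collapse $I_B$ to the Fourier coefficient $a_f(T_B)$, where $T_B$ is the Gram matrix of a basis of a maximal order $\Lambda_B \subset B$, together with a product of local zeta integrals. This is precisely why the theorem is stated in terms of a nonvanishing \emph{maximal} Fourier coefficient: given such $T$, I would take $B$ to be the associated quaternion algebra, whence $a_f(T_B)\neq 0$ and $I_B(\phi,s)$ is not identically zero.

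Next I would carry out the local computations. At each finite prime $p$, the unramified computation should identify the local integral with $L_p(\pi,Spin,s)$, with the maximality of $\Lambda_B$ at every $p$ being essential to ensure the computation is clean for all primes simultaneously (non-maximality at $p$ would introduce local correction factors that would spoil the identification). At the archimedean place, the inducing section at $\infty$ would be chosen to match the weight-$2r$ holomorphic discrete-series vector underlying $f$, and the resulting local integral should produce exactly the product $\Gamma_\C(s+r-2)\Gamma_\C(s+r-1)\Gamma_\C(s+r)\Gamma_\C(s+3r-3)$. Combining these steps yields $I_B(\phi,s) = c \cdot a_f(T_B) \cdot \Lambda(\pi,Spin,s)$ for a nonzero constant $c$. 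The meromorphic continuation, finiteness of poles, and functional equation $s \leftrightarrow 1-s$ for $\Lambda(\pi,Spin,s)$ then follow from the corresponding standard properties of $E_B(g,s)$ and its intertwining operator.

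The main obstacle will be the unramified local calculation: the Spin representation of the dual group $\Spin_7(\C)$ is $8$-dimensional and non-minuscule, so extracting this specific Euler factor from a $p$-adic zeta integral is subtle and depends on a precise choice of the degenerate principal series on $\GG_B$. A secondary difficulty is the archimedean matching, which must produce the unusual shift $\Gamma_\C(s+3r-3)$ that breaks the symmetric pattern of the other three $\Gamma_\C$-factors; this will likely require a delicate analysis of the holomorphic section of $E_B$ against the lowest $K$-type of the discrete series at infinity. The orbit analysis in the unfolding, while technical, should be tractable using the explicit Clifford-algebra dictionary between symmetric matrices and quaternion orders.
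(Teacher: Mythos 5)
Your proposal follows essentially the same route as the paper: the auxiliary group is the Freudenthal group of type $D_6$ attached to $H_3(B)$, the integral unfolds over a unique open orbit to the maximal Fourier coefficient $a_f(T)$ times an Euler product, and the analytic properties are inherited from the normalized degenerate Eisenstein series (whose functional equation $s\mapsto 5-s$ becomes $s\mapsto 1-s$ after the shift in the identification $I_{2r}^*(\phi,s)\doteq a_f(T)\Lambda(\pi,Spin,s-2)$). The only notable executional differences are that the paper sidesteps your anticipated ``main obstacle'' --- a prime-by-prime unramified computation --- by recognizing the unfolded finite-place integral as Evdokimov's classical Dirichlet series for the Spin $L$-function, and that it establishes the analytic properties of the weight-$2r$ Eisenstein series by applying Maass--Shimura type differential operators to the spherical case.
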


The meromorphic continuation of the $L$-function, but not the functional equation or finiteness of the poles, was known previously.  Namely, the Spin $L$-function on $\GSp_6$ appears in the constant term of a cuspidal Eisenstein series on an exceptional group of type $F_4$.  Thus its meromorphic continuation follows from the method of Langlands \cite{LanglandsEP}, see Shahidi \cite[Theorem 6.1]{shahidi}.  Since the underlying cuspidal representation of a Siegel modular form is not generic, the Langlands-Shahidi method cannot be used to obtain the functional equation or the finiteness of the poles of this $L$-function.

The bound we get on the poles is that they are contained in the set of integer and half-integer points in the interval $[-2,3]$; this bound is not expected to be optimal.  One should be able to get much better control of the poles by applying the theorems of Yamana \cite{yamana2}. Let us also remark that it is known \cite{yamana1} that every level one Siegel modular form has a non-vanishing Fourier coefficient $a(T)$ for a $T$ that is \emph{primitive}, meaning that $T/n$ is not half-integral for all integers $n > 1$.  The primitive $T$ correspond to \emph{Gorenstein} quaternion orders \cite{gL}, the set of which strictly contain the set of maximal orders.  

It is an interesting question to determine if every level $1$ Siegel modular form $f$ on $\GSp_6$ as in Theorem \ref{MainThm} has a non-vanishing Fourier coefficient $a_{f}(T)$ with $T$ corresponding to a maximal order.  The analogous statement for Siegel modular forms on $\GSp_4$ was proved by Saha in \cite{saha}.  On $\GSp_4$, Fourier coefficients $a(T)$ of level $1$ Siegel modular forms are parametrized by $2 \times 2$ half-integral symmetric matrices.  Applying Gauss composition, such $T$ correspond to pairs $(S,I)$ where $S$ is a quadratic ring and $I$ is an ideal class for $S$.  It is a consequence of the main results of \cite{saha} that every level $1$ Siegel modular form on $\GSp_4$ has a non-vanishing Fourier coefficient $a(T)$ for a $T$ that corresponds to a pair $(S,I)$ with $S$ the maximal order in a quadratic imaginary extension of $\Q$.  Analogously to Theorem \ref{MainThm}, the need for a non-vanishing ``maximal" Fourier coefficient for Siegel modular forms on $\GSp_4$ arises in the integral representation of Furusawa \cite{furusawa}.

Since we obtain the functional equation of the completed Spin $L$-function for level one Siegel modular forms, we thus get the exact order of vanishing of $L(\pi,Spin,s)$ for these $\pi$ at sufficiently negative integers.
\begin{corollary} Suppose that $\pi$ is a cuspidal automorphic representation of $\GSp_6(\A)$ associated to a level one Siegel modular form $f$ of even weight, and that $a_f(T) \neq 0$ for a Fourier coefficient of $f$ corresponding to a maximal order in a quaternion algebra over $\Q.$  Then $ord_{s=-j}L(\pi,Spin,s) = 4$ for sufficiently large integers $j$. \end{corollary}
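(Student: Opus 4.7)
The plan is to read off the order of vanishing directly from the functional equation and the Gamma factors. Each factor $\Gamma_\C(s+a) = 2(2\pi)^{-(s+a)}\Gamma(s+a)$ is nowhere zero and has a simple pole exactly when $s+a \in \Z_{\leq 0}$. In the product
\[\Gamma_\C(s+r-2)\Gamma_\C(s+r-1)\Gamma_\C(s+r)\Gamma_\C(s+3r-3),\]
all four shifts are integers, so for every sufficiently large integer $j$ the point $s=-j$ is a simple pole of each of the four factors. Their product therefore has a pole of order exactly $4$ at $s=-j$ whenever $j$ is sufficiently large.

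Next I would show that $\Lambda(\pi,Spin,-j)$ is finite and nonzero for large $j$. For $Re(s)$ large the Euler product defining $L(\pi,Spin,s)$ converges absolutely and is a product of nonvanishing local Euler factors, while the Gamma factors are finite and strictly positive; hence $\Lambda(\pi,Spin,1+j) \neq 0$ and finite once $1+j$ lies in the region of absolute convergence. Theorem~\ref{MainThm} supplies both the functional equation $\Lambda(\pi,Spin,-j) = \Lambda(\pi,Spin,1+j)$ and the fact that $\Lambda$ has only finitely many poles. Thus for $j$ large enough to clear the region of absolute convergence \emph{and} to avoid the finite pole set of $\Lambda$, the value $\Lambda(\pi,Spin,-j)$ is finite and nonzero.

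Combining the two observations, $L(\pi,Spin,s)$ equals $\Lambda(\pi,Spin,s)$ divided by the Gamma product. Since the denominator has a pole of order exactly $4$ at $s=-j$ and the numerator is finite and nonzero there, $L(\pi,Spin,s)$ has a zero of order exactly $4$ at $s=-j$, giving $ord_{s=-j}L(\pi,Spin,s)=4$ as claimed.

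There is really no obstacle here: each ingredient — the simple-pole structure of $\Gamma$, absolute convergence of the Euler product in a right half-plane, and the functional equation plus finiteness of poles of $\Lambda$ — is either in the hypotheses or a standard fact. The only care needed is to choose $j$ large enough that all three auxiliary conditions hold simultaneously, which is clearly possible since each imposes only a lower bound on $j$.
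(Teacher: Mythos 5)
Your proposal is correct and is exactly the argument the paper intends (the paper states the corollary as an immediate consequence of the functional equation without writing out a proof): the four $\Gamma_\C$ factors each acquire a simple pole at $s=-j$ for large integers $j$, while the functional equation transports $\Lambda(\pi,Spin,-j)$ to a point in the region of absolute convergence where it is finite and nonzero, forcing a zero of order exactly $4$ of $L(\pi,Spin,s)$. Your care in choosing $j$ large enough to satisfy all three conditions simultaneously is the only point requiring attention, and you handle it correctly.
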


Theorem \ref{MainThm} follows from the analysis of the Rankin-Selberg integral $I_B(\phi,s)$ for the Spin $L$-function, together with the analytic properties of the normalized Eisenstein series used in the integral.  Suppose that $B$ is a quaternion algebra over $\Q$, ramified at infinity.  Denote by $J$ the set of three-by-three Hermitian matrices over $B$: $J = \{ h \in M_3(B): h^* = h\},$ where for $x \in M_3(B)$, $x^*$ denotes the transpose conjugate of $x$, the conjugation being from the quaternion structure on $B$.  Associated to $J$, the \emph{Freudenthal construction} produces a reductive $\Q$-group $\GG$ of type $D_6$, and there is an embedding $\GSp_6 \rightarrow \GG$.  The group $\GG$ has a maximal parabolic $P$, whose unipotent radical is abelian, and naturally isomorphic to $J$ under addition.  Suppose $r \geq 0$ is an integer, and $B_0$ is a maximal order in $B$.  Associated to $r, B_0$, and a character $\chi_s: P(\A) \rightarrow \C^\times$, we define a normalized Eisenstein series $E_{2r}^*(g,s)$ for $\GG$. 

For a cusp form $\phi$ in the space of a cuspidal representation $\pi$ associated to a Siegel modular form of weight $2r$, the Rankin-Selberg integral is
\[I_{2r}(\phi,s) = \int_{\GSp_6(\Q)Z(\A)\backslash \GSp_6(\A)}{\phi(g)E_{2r}^*(g,s)\,dg}.\]
Suppose that the maximal order $B_0$ corresponds to the half-integral symmetric matrix $T$.  When $\phi$ is as in Definition \ref{holAssumption}, we show that $I_{2r}(\phi,s)$ is equal to $a_{f_\phi}(T) \Lambda(\pi,Spin,s-2)$, up to a nonzero constant.  To show this equality, we unfold the integral $I_{2r}(\phi,s)$, and then apply the result \cite{evdokimov} of Evdokimov, who computed a Dirichlet series for the Spin $L$-function on $\GSp_6$.  More precisely, \cite[Theorem 3]{evdokimov} is not quite a Dirichlet series for the Spin $L$-function.  However, when one combines Theorem 3 of loc. cit. with Lemma \ref{AITlemma} below, which involves the arithmetic invariant theory of quaternion orders, and the fact that $B_0$ is a maximal order, one does obtain a Dirichlet series for the Spin $L$-function.  The integral $I_{2r}(\phi,s)$ then unfolds to exactly this Dirichlet series.

In section \ref{Eisenstein}, we use Langlands' theorem of the constant term and functional equation, together with Maass-Shimura differential operators, to analyze the analytic properties of the normalized Eisenstein series $E_{2r}^*(g,s)$.  We prove that it satisfies the functional equation $E_{2r}^*(g,s) = E^*_{2r}(g,5-s)$ and has at most finitely many poles, all contained in the set of integral and half-integral points in the interval $[0,5]$.  Theorem \ref{MainThm} then follows.

\subsection{The Spin $L$-function} When a reductive $\Q$-group has an associated Shimura variety, the minuscule cocharacter $\mu$ that is part of the Shimura datum singles out a representation of the Langlands dual group, and thus an automorphic $L$-function, that is of particular importance.  These $L$-functions are expected to control the arithmetic of the Shimura variety.  On Siegel modular varieties this is the Spin $L$-function.  For Siegel modular forms on $\GSp_4$, the desired analytic properties of this $L$-function were first worked out by Andrianov \cite{andrianov2A}, \cite{andrianov2B}.  Other Rankin-Selberg integrals for spinor $L$-functions that apply to holomorphic modular forms are the triple product of Garrett \cite{garrett} on $\GL_2 \times \GL_2 \times \GL_2$, which was extended to $\GL_2$ over a general \'etale cubic algebra by Piatetski-Shapiro and Rallis \cite{psrRankin3}, and the integral of Furusawa \cite{furusawa} for the Spin $L$-function on $\GSp_4 \times \GU(1,1)$.  The Spin $L$-function of Siegel modular forms on $\GSp_{2n}$ for $n >3$ remains poorly understood.

For the Spin $L$-function on $\GSp_6$, there is also the integral of Bump-Ginzburg \cite{bg} that applies to generic cusp forms, and more recently the result \cite{pollackShah} of Pollack-Shah that applies to cuspidal representations that support a rank two Fourier coefficient.  The integrals in \cite{bg} and \cite{pollackShah} vanish identically for holomorphic Siegel modular forms.  

It turns out that the integral in this paper, and the older integrals \cite{garrett}, \cite{psrRankin3}, \cite{furusawa}, \cite{bg}, and \cite{pollackShah} all appear to be mysteriously connected to the magic triangle of Deligne and Gross \cite{dG}, as we explain in Appendix \ref{magic}.  This connection seems to be closely related to the so-called \emph{towers} of Rankin-Selberg integrals introduced by Ginzburg-Rallis \cite{ginzRallis}.  Furthermore, that Freudenthal's magic square might be connected to the Rankin-Selberg method is discussed, from a perspective different from our own, in \cite{bumpRS}.  Nevertheless, we hope the examples we discuss in the appendix will stimulate future inquiry. 

\subsection{Layout of paper} The contents of the paper is as follows.  In section \ref{sec:Freud} we review the Freudenthal construction.  In section \ref{AIT}, we review and slightly extend the results of \cite{gL} that relate ternary quadratic forms and quaternion orders.  In section \ref{sec:dirichlet}, we recall \cite[Theorem 3]{evdokimov} of Evdokimov, and combine this result with the work of section \ref{AIT} to obtain a Dirichlet series for the Spin $L$-function.  We define the global Rankin-Selberg integral and unfold it in section \ref{sec:Global}.  In section \ref{unram} we evaluate the unfolded integral in terms of $\Lambda(\pi,Spin,s)$.  We give some results on the group $\GG(\A)$ and analyze the normalized Eisenstein series $E_{2r}^*(g,s)$ in section \ref{Eisenstein}.  The group $\GG$ is closely related to a classical group $\GU_6(B)$.  In Appendix \ref{GU6} we make this relationship explicit.  In Appendix \ref{magic} we briefly discuss the magic triangle of Deligne and Gross and its connection to some Rankin-Selberg integrals.

\subsection{Acknowledgments} It is a pleasure to thank Shrenik Shah for his collaboration on \cite{pollackShah},  without which this work might never have come to fruition.  We are grateful to Benedict Gross, Mark Lucianovic, Ila Varma, and Akshay Venkatesh for enlightening conversations.  We also thank Siegfried B\"ocherer, Gordan Savin and John Voight for their very helpful comments on an earlier version of this manuscript.  This work has been partially supported through the NSF by grant DMS-1401858.

\section{The Freudenthal construction}\label{sec:Freud}
In this section we review the Freudenthal construction \cite{freudenthal}, specialized to the case of interest for us.  Nothing in this section is new.  We follow the exposition of Bhargava-Ho \cite[Sections 5.3 and 6.4]{bhargavaHo} fairly closely, with the exception of our definition of rank one elements.  The reader is also instructed to see Springer \cite{springer} and the references contained therein for more on the Freudenthal construction.
\subsection{Three by three hermitian matrices}
Suppose that $B$ is a quaternion algebra over the ground field $F$ of characteristic zero.  For $a \in B$, write $a^*$ for the conjugate of $a$, $\tr(a) = a + a^*$, $n(a) = aa^*$, the trace and norm of $a$.  These are considered as elements of $F$.  The symmetric pairing on $B$ is defined by 
\begin{equation}\label{sym B pairing}(u,v) = \tr(uv^*) = (v,u). \end{equation}

We consider $3 \times 3$ hermitian matrices over $B$.  If $h$ is such a matrix, then 
\begin{equation}\label{h def}h = \left(\begin{array}{ccc}c_1 & a_3 & a_2^*\\ a_3^* & c_2 & a_1 \\ a_2 & a_1^* & c_3\end{array}\right)\end{equation}
where $c_1, c_2, c_3$ are elements of $F$ and $a_1, a_2, a_3$ are elements of $B$.  The set of such matrices is denoted $H_3(B)$.

For $h$ as in (\ref{h def}), define
\[N(h) := c_1c_2c_3 - c_1n(a_1) - c_2n(a_2) - c_3n(a_3) + \tr(a_1a_2a_3)\]
the norm of $h$, and 
\[\tr(h) = c_1 + c_2 + c_3\]
the trace of $h$.  Also define
\begin{equation}\label{hSharp} h^{\#} := \left(\begin{array}{ccc}c_2c_3-n(a_1) & a_2^*a_1^*-c_3a_3 & a_3a_1-c_2a_2^*\\ a_1a_2-c_3a_3^* & c_1c_3-n(a_2) & a_3^*a_2^*-c_1a_1 \\ a_1^*a_3^*-c_2a_2 & a_2a_3-c_1a_1^* & c_1c_2-n(a_3)\end{array}\right).\end{equation}
One has that $hh^\# = N(h)1_3 = h^\#h$, and $(h^\#)^\# = N(h)h$.  
\begin{definition}\label{rnkH3B} An element $h \in H_3(B)$ is said to be of \emph{rank three} if $N(h) \neq 0$, of \emph{rank two} if $N(h) = 0$ but $h^\# \neq 0$, and of \emph{rank one} if $h^\# = 0$ but $h \neq 0$. \end{definition}

For $x, y$ three-by-three hermitian matrices over $B$, define
\[x \times y := (x+y)^\# - x^\# - y^\#\]
and
\[ \tr(x,y) := \frac{1}{2}\tr(xy+yx).\]
Finally, denote by
\[(\blank, \blank,\blank): H_3(B) \times H_3(B) \times H_3(B) \rightarrow F\]
the unique symmetric trilinear form satisfying $(x,x,x) = 6N(x)$.  Equivalently, 
\[(x,y,z) = N(x+y+z) - N(x+y)-N(x+z)-N(y+z) + N(x) + N(y) + N(z).\]
For $x, y, z \in H_3(B)$, one has
\[\tr(x \times y, z) = (x,y,z) = \tr(x, y\times z).\] 

The trilinear form is nondegenerate in the sense that if $(x,y,z) = 0$ for all $x, y$, then $z = 0$.  The formula
\[\tr(x,y) = \sum_{1 \leq i \leq 3}{x_{ii}y_{ii}} + \sum_{1\leq i < j \leq 3}{(x_{ij},y_{ij})}\]
shows that the bilinear form $\tr(\blank, \blank)$ is nondegenerate.  Here $x_{ij}$ denotes the $i, j$ entry of $x$, and $(\blank, \blank)$ is the symmetric pairing on $B$ defined in (\ref{sym B pairing}).  For $x, y \in H_3(B)$, one has the formula
\begin{equation}\label{Nx+y} N(x+y) = N(x) + \tr(x^\#,y) + \tr(x,y^\#) + N(y).\end{equation}
\subsection{The defining representation}
Fix the quaternion algebra $B$ over $F$.  We denote by $W$ the vector space $F \oplus H_3(B) \oplus H_3(B) \oplus F$.   Typical elements of $W$ are denoted $(a,b,c,d)$ with $a, d \in F$, $b,c \in H_3(B)$.  The space $W$ is equipped with a symplectic and quartic form.  The symplectic form $\langle \blank, \blank \rangle : W \times W \rightarrow F$ is defined by 
\[ \langle (a,b,c,d) , (a',b',c',d') \rangle = ad' - \tr(b,c') + \tr(c,b') - da'.\]
The quartic form $Q: W \rightarrow F$ is defined by 
\[Q((a,b,c,d)) = (ad - \tr(b,c))^2 + 4aN(c) + 4dN(b) - 4\tr(b^\#,c^\#).\]

We now define the (similitude) algebraic group $\GG$.
\begin{definition} Consider $\GL(W)$ to act on the right of $W$.  The group $\GG$ is defined to be the set of $(g,\nu(g)) \in \GL(W) \times \GL_1$ that satisfy $\langle ug , vg \rangle = \nu(g) \langle u, v \rangle$ and $Q(wg) = \nu(g)^2 Q(w)$ for all $u, v, w \in W$.  The map $\nu: \GG \rightarrow \GL_1$ sending $g \mapsto \nu(g)$ is called the \emph{similitude}. \end{definition}
Typically, the Freudenthal construction defines the subgroup of $\GG$ with similitude 1, but we will need this similitude version.  It is clear that $\GG$ is a reductive group; knowledge of the generators of $\GG$ proves that it is connected as well \cite{brown}.

We will give some explicit elements of $\GG$. Most simply, we have the following maps:
\begin{itemize}
\item For $\lambda \in \GL_1$, $(a,b,c,d) \mapsto (\lambda a, \lambda b, \lambda c, \lambda d)$, with similitude $\lambda^2$
\item and $(a,b,c,d) \mapsto (\lambda^2 a, \lambda b, c, \lambda^{-1} d)$, with similitude $\lambda$;
\item $(a,b,c,d) \mapsto (-d,c,-b,a)$ with similitude 1.
\end{itemize}
This last map we denote by $J_6$, or $J$, if no confusion seems likely.  Under the map $\GSp_6 \rightarrow \GG$ defined in section \ref{gsp6toG}, $J_6$ is the image of the element $\mm{}{1_3}{-1_3}{}$ of $\GSp_6$.

Here are some more interesting elements of $\GG$.  First, for $X \in H_3(B)$, define $n(X): W \rightarrow W$ via the formula
\begin{equation}\label{n(X)form} (a,b,c,d)n(X) = (a, b + aX, c+ b \times X + aX^\#, d + \tr(c,X) + \tr(b,X^\#) + aN(X)).\end{equation}
Similarly, for $Y \in H_3(B)$, define $\bar{n}(Y): W \rightarrow W$ via the formula
\[(a,b,c,d)\bar{n}(Y) = (a + \tr(b,Y) + \tr(c,Y^\#) + dN(Y), b+c\times Y + dY^\#, c + dY, d).\]
The following lemma says that the $n(X), \bar{n}(Y)$ are elements of $\GG$ with similitude 1, that $n(X_1 + X_2) = n(X_1)n(X_2)$, and similarly for $\bar{n}$.  For a proof of the lemma, see \cite[Lemma 2.3]{springer}. 
\begin{lemma}\label{n(X)} The maps $X \mapsto (n(X),1)$ and $Y \mapsto (\bar{n}(Y),1)$ define group homomorphisms $H_3(B) \rightarrow \GG$.\end{lemma}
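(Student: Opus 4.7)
The plan is a direct polynomial computation in the variables $a,b,c,d,X$; essentially the same verification is carried out in Springer \cite[Lemma 2.3]{springer} in a more general setting. Three things must be checked: that $(n(X),1)\in\GG$ (i.e.\ $n(X)$ preserves $\langle\cdot,\cdot\rangle$ and $Q$ with similitude $1$), that $n(X_1+X_2) = n(X_1)\,n(X_2)$, and the analogous statements for $\bar{n}(Y)$.

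I would begin with the group law, since the identities it uses are also needed later. Applying $n(X_1)$ followed by $n(X_2)$ to $(a,b,c,d)$ and matching each component against $(a,b,c,d)\,n(X_1+X_2)$ reduces to: bilinearity of $\times$, the expansion $(X_1+X_2)^{\#} = X_1^{\#} + X_1\times X_2 + X_2^{\#}$ (immediate from the definition of $\times$), the identity \eqref{Nx+y}, and the trilinear identity $\tr(b\times X_1, X_2) = (b,X_1,X_2) = \tr(b, X_1\times X_2)$. This simultaneously shows that $n(X)$ is invertible with inverse $n(-X)$.

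Next, preservation of the symplectic form is a cancellation check in $\langle u\,n(X),\,v\,n(X)\rangle - \langle u,v\rangle$. Beyond the symmetry of $\tr(\cdot,\cdot)$, the only specific identities needed are $\tr(X,X^{\#}) = 3N(X)$ and $\tr(b\times X, X) = 2\tr(b,X^{\#})$, both direct consequences of $\tr(x\times y, z) = (x,y,z)$ applied with $y = z = X$ and of $X\times X = 2X^{\#}$.

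The main obstacle is preservation of the quartic form $Q$, since each of the four summands of $Q(u\,n(X))$ contributes a polynomial in $X$ of degree up to four with many monomials. The efficient strategy is to expand $B^{\#} = b^{\#} + a(b\times X) + a^{2}X^{\#}$ and $C^{\#} = (c + b\times X + aX^{\#})^{\#}$ using the multivariate version of \eqref{Nx+y} and the polarization of $\#$, then invoke the adjoint identities $hh^{\#} = N(h)1_3$ and $(h^{\#})^{\#} = N(h)\,h$, and finally collect coefficients of each monomial type in $X$. All non-constant coefficients vanish by the standard Jordan-algebraic identities already recalled in this section; the computation is tedious but mechanical. Finally, $\bar{n}(Y)$ is handled by symmetry: the involution $J$ given above lies in $\GG$ (a direct check on $\omega$ and $Q$), and conjugation yields $J\,n(-Y)\,J^{-1} = \bar{n}(Y)$, so the corresponding assertions for $\bar{n}$ follow from those for $n$.
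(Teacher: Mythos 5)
Your proposal is correct and follows essentially the same route as the paper, which simply cites Springer \cite[Lemma 2.3]{springer} for exactly this direct verification: your outline (group law from bilinearity of $\times$, $(X_1+X_2)^{\#}=X_1^{\#}+X_1\times X_2+X_2^{\#}$ and \eqref{Nx+y}; symplectic invariance from $\tr(X,X^{\#})=3N(X)$ and $\tr(b\times X,X)=2\tr(b,X^{\#})$; quartic invariance from the adjoint identities) is the content of that reference. The reduction $\bar{n}(Y)=J\,n(-Y)\,J^{-1}$ with $J\in\GG$ is a nice economy that halves the work and checks out.
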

We define one more type of (fairly) explicit element of $\GG$.  Consider triples 
\[(t, \tilde{t}, \lambda) \in \GL(H_3(B)) \times \GL(H_3(B)) \times \GL_1\]
that satisfy, for all $b, c \in H_3(B)$
\begin{itemize}
\item $\tr(t(b),\tilde{t}(c)) = \tr(b,c)$
\item $N(t(b)) = \lambda N(b)$
\item $N(\tilde{t}(c)) = \lambda^{-1} N(c)$
\item $t(b)^\# = \lambda \tilde{t}(b^\#)$ and $\tilde{t}(c)^\# = \lambda^{-1}t(c^\#)$
\end{itemize}
Since the pairing $\tr(\;,\;)$ is nondegenerate, the first property uniquely determines $\tilde{t}$ in terms of $t$, and then the first two properties imply the final two. If $m \in \GL_3(B)$, and $r \in \GL_1(F)$, then the linear map $t(b) = r^{-1} m^* b m$ satisfies $N(t(b)) = r^{-3}N(m^* m)N(b)$ for all $b$ in $H_3(B)$, and thus defines such an automorphism.  For such a triple $(t , \tilde{t}, \lambda)$, one defines a map $m(t): W \rightarrow W$ via
\[ (a,b,c,d)m(t) = (\lambda a, t(b), \tilde{t}(c), \lambda^{-1} d).\]
It is immediate that $m(t)$ defines an element of $\GG$ with similitude $1$.

We now define the rank of an element of $W$.  First, polarizing the quartic form $Q$, there is a unique symmetric $4$-linear form on $W$ that satisfies $(v,v,v,v) = Q(v)$ for all $v$ in $W$.
\begin{definition} All elements of $W$ have rank $\leq 4$.  An element $v$ of $W$ is of rank $\leq 3$ if $Q(v) = (v,v,v,v) = 0$.  An element $v$ is of rank $\leq 2$ if $(v,v,v,w) = 0$ for all $w$ in $W$.  An element $v$ is of rank $\leq 1$ if $(v,v,w,w') = 0$ for all $w$ in $W$ and $w'$ in $(Fv)^\perp$.  Here $(Fv)^\perp$ is the set of $x \in W$ with $\langle v, x \rangle = 0$.  Finally, $0$ is the unique element of rank $0$. Since $\GG$ preserves the symplectic and the $4$-linear form, $\GG$ preserves the rank of elements of $W$.\end{definition}

Elements of $W$ of rank one will play an important role in this paper.  For example, $f = (0,0,0,1)$ is of rank one, and then applying $\overline{n}(X)$ for $X$ in $H_3(B)$, one gets $(N(X),X^\#,X,1)$ is rank one.  To check that $f$ is rank one, the reader must simply observe (immediately) that the coefficient of $\mu \mu'$ in $Q(f + \mu w + \mu' w')$ is zero for general $w = (a,b,c,d)$ and $w' = (0,b',c',d')$ in $W$.

It is easy to check that all the elements of rank one are in a single $\GG$ orbit.  For example, one argument goes as follows: First, by applying operators $n(X)$, $J_6$, and scalar multiplication, it is easy to see that every element $v$ of $W$ can be moved to one with $d=1$.  Then applying $\overline{n}(X)$, every element has a $\GG$ translate of the form $v_0 = (z,y,0,1)$.  Now, if $v$ and thus $v_0$ is rank one, then $(v_0,v_0,w,w') = 0$ for arbitrary $w = (0,b,c,0)$, $w' = (0,b',0,0)$.  The coefficient of $2 \mu \mu'$ in $Q(v_0 + \mu w + \mu w')$ is now easy to compute, and it is found to be $2\tr(y,b\times b') -z\tr(b',c)$.  Since this must be zero for all $b, b', c$, it follows that $y$ and $z$ are zero, by the nondegeneracy of the two pairings.  Hence $v_0 = (0,0,0,1) = f$.

If $v$ is a rank one element, the stabilizer in $\GG$ of the line $Fv$ is a parabolic subgroup of $\GG$.  Indeed, consider the set of flags of the form $0 \subseteq \ell \subseteq W' \subseteq W$ where $\ell$ is dimension one and $W'$ is codimension one.  The conditions $\langle v, w' \rangle =0$ and $(v,v,w,w') = 0 $ for all $v \in \ell$, $w' \in W'$ and $w \in W$ clearly form closed conditions on the Grassmanian of flags of these dimensions.  Since these conditions single out the rank one lines, and since $\GG$ acts transitively on these lines, the stabilizer of a rank one line is a parabolic subgroup.

The argument above also shows that if $(z,y,x,1)$ is rank one, then $y = x^\#$ and $z = N(x)$. (Just apply $\overline{n}(-x)$ to $(z,y,x,1)$.) In fact, the following is true.
\begin{proposition}\label{rk1Sharp} The element $(a,b,c,d)$ has rank at most one if and only if $b^\# = ac, c^\# = db$, $bc = cb$, and $ad = bc$.  Here, the products $bc$ and $cb$ are taken in $M_3(B)$. \end{proposition}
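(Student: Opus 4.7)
The plan is to reduce to the case $d \neq 0$, which is essentially handled by the paragraph immediately preceding the proposition. Three cases arise: (i) $d \neq 0$; (ii) $d = 0$ and $a \neq 0$; (iii) $a = d = 0$. Case (ii) reduces to case (i) via $J_6 \in \GG$, which sends $(a, b, c, 0) \mapsto (0, c, -b, a)$; a short verification shows that the four equations for $(a, b, c, 0)$ and for its $J_6$-image are equivalent (the roles of equations (i)/(ii) and (iii)/(iv) interchange). Case (iii) requires the unipotent element $n(X)$, varied over $X \in H_3(B)$, to escape the $a = d = 0$ locus.

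For direction $(\Rightarrow)$: case (i) follows by scaling $v$ by $1/d$ and applying the calculation of the text, which gives $b = c^\#/d$ and $a = N(c)/d^2$; the four equations then follow from $c^{\#\#} = N(c) c$ and $c c^\# = c^\# c = N(c) 1_3$ in $M_3(B)$. For case (iii), given $v = (0, b, c, 0)$ rank one with $(b, c) \neq (0, 0)$, I first observe that the polynomial $p(X) := \tr(c, X) + \tr(b, X^\#)$ in $X \in H_3(B)$ is not identically zero: its linear part is nonzero when $c \neq 0$ by nondegeneracy of the bilinear form on $H_3(B)$, and when $c = 0$ the quadratic part $\tr(b, X^\#) = (b, X, X)/2$ is nonzero for some $X$ by nondegeneracy of the trilinear form (both noted in the text). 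Hence for $X$ in a Zariski-dense open subset of $H_3(B)$, the element $v n(X) = (0, b, c + b \times X, p(X))$ has nonzero last coordinate and, by case (i), satisfies the four equations. Since these are polynomial identities in $X$ holding on a dense open set, they hold for all $X \in H_3(B)$; evaluating at $X = 0$ yields $b^\# = 0$, $c^\# = 0$, $bc = cb$, and $bc = 0$, which together give the four equations for $v$ (since $a = d = 0$).

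For direction $(\Leftarrow)$: case (i) follows by normalizing $d = 1$, in which case $b = c^\#$ and (when $c \neq 0$) $a = N(c)$, so $v = f \bar n(c)$ in the notation of the text. In case (iii), choose $X$ as above with $p(X) \neq 0$, and verify that $v n(X)$ satisfies the four equations. Equations (i), (iii), and (iv) follow from $b^\# = c^\# = 0$, $bc = cb = 0$, and the identity $b(b \times X) = 0$ when $b^\# = 0$ (obtained from the coefficient of $t$ in $(b+tX)(b+tX)^\# = N(b+tX) 1_3$), together with the hermitian property giving $(b \times X) b = 0$. Equation (ii) follows from the polarization identity $(b \times X)^\# = \tr(b, X^\#) b$ when $b^\# = 0$ (a special case of $(h \times X)^\# + h^\# \times X^\# = \tr(h^\#, X) X + \tr(h, X^\#) h$) together with an analogous relation for $c \times (b \times X)$. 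Then $v n(X)$ is rank one by case (i), and since $n(X) \in \GG$, so is $v$.

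The main obstacle is case (iii) of $(\Rightarrow)$: extracting four polynomial conditions on $(b, c)$ from the single rank-one condition requires varying $X$ in the action of $n(X)$ and then exploiting the polynomial nature of the resulting identities to reduce to the value $X = 0$. The nondegeneracy of the bilinear and trilinear forms on $H_3(B)$, which guarantees the existence of suitable $X$, is essential.
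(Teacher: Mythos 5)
Your overall architecture is sound, and it is worth noting that the paper itself offers no proof of this proposition (it cites \cite{ganSavin}), so a self-contained argument is welcome. The forward direction checks out in all three cases: the reduction of $d=0$, $a\neq 0$ to $d\neq 0$ via $J_6$ (the four equations are indeed permuted into each other), the use of the normal form $(N(c),c^{\#},c,1)$ from the text together with $c^{\#\#}=N(c)c$ and $cc^{\#}=c^{\#}c=N(c)1_3$, and the Zariski-density argument in the case $a=d=0$ (your nonvanishing claim for $p(X)=\tr(c,X)+\tr(b,X^{\#})$ is correct by nondegeneracy of the bilinear and trilinear forms, and specializing the resulting polynomial identities at $X=0$ is legitimate over a field of characteristic zero). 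The converse in cases (i) and (ii) is also fine, as are the identities $b(b\times X)=(b\times X)b=0$ when $b^{\#}=0$.

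The gap is the last step of the converse in the case $a=d=0$: to get equation (ii) for $vn(X)$ you need, after your (correct) identity $(b\times X)^{\#}=\tr(b,X^{\#})\,b$, the further identity
\[
c\times(b\times X)=\tr(c,X)\,b \qquad \text{given } b^{\#}=c^{\#}=0,\ bc=cb=0,
\]
which you wave off as ``an analogous relation.'' It is not analogous: the $st$-coefficient of the polarization of $(h^{\#})^{\#}=N(h)h$ at $h=b+sc+tX$ gives, when $b^{\#}=0$, the identity $(b\times c)\times(b\times X)=\tr(b\times c,X)\,b$ --- with $b\times c$ where you need $c$, and these differ (e.g.\ for $b=e_{11}$, $c=e_{22}$ in $H_3(F)$ one has $b\times c=e_{33}$). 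The identity you actually need is true, but it uses the hypotheses $c^{\#}=0$ and $bc=cb=0$ in an essential way and requires its own argument; one workable route is to reduce $(b,c)$ to $(\beta e_{11},\gamma e_{22})$ via the structure-group elements $m(t)$ (using that for $B$ division the elements with vanishing adjoint are the scalar multiples of $vv^{*}$, and $bc=0$ forces the corresponding vectors to be orthogonal, hence part of a $B$-basis) and then verify $e_{22}\times(e_{11}\times X)=X_{22}\,e_{11}$ by direct computation with the explicit formula for $\#$. Until that identity is supplied, the converse in the case $a=d=0$ is not established.
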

A proof of the proposition may be found in \cite[Proposition 11.2]{ganSavin}.

\section{Arithmetic invariant theory}\label{AIT}
In this section we review the arithmetic invariant theory of Gross-Lucianovic \cite{gL}, and extend it slightly.  The results in \cite{gL} relate ternary quadratic forms to orders in quaternion algebras, together with a nice basis of this order, up to isomorphism. The reader should consult Gross-Lucianovic \cite{gL} and Voight \cite{voight} for more details and related results.  

The relation between ternary quadratic forms and quaternion algebras has a long history, going back at least to Brandt \cite{brandt}, Eichler \cite{eichler}, and Brzezinksi \cite{brzezinski, brzezinski2}.  It has been studied by many others as well, and from varying perspectives.  The formulation of \cite{gL} turns out to be very well-suited for our analysis of the Rankin-Selberg integral.

First, following \cite{gL}, a quaternion ring is defined as follows. Suppose $R$ is a commutative ring, and $\Lambda$ is an associative $R$-algebra with unit.  Then $\Lambda$ is a \emph{quaternion ring} over $R$ if the following properties are satisfied:
\begin{itemize}
\item $\Lambda$ is free of rank four as an $R$-module, and $\Lambda/(R \cdot 1)$ is free of rank three.
\item There is an $R$-linear anti-involution $x \mapsto x^*$ on $\Lambda$, such that $x^* = x$ for $x$ in $R$, and $\tr(x):=x + x^*$, $n(x) := x x^*$ are in $R$. 
\item If $x \in \Lambda$, then left multiplication by $x$ on $\Lambda$ has trace $2\tr(x)$. \end{itemize}
For instance then, if $x \in \Lambda$, then $x^2 = \tr(x)x-n(x)$.  Note that we always assume our quaternion rings are free; a vast generalization is considered in \cite{voight}.  The following definition and lemma from \cite{gL} are crucial.
\begin{definition}[\cite{gL}] Suppose $1, v_1, v_2, v_3$ form a basis of the quaternion ring $\Lambda$.  Then $1, v_1, v_2, v_3$ are said to form a \emph{good basis} if there exist $a, b,c$ in $R$ such that the matrix
\begin{equation}\label{A matrix}A = \left(\begin{array}{ccc} a & v_3 & v_2^* \\ v_3^* & b & v_1 \\ v_2 & v_1^* & c \end{array}\right)\end{equation}
is rank one in the sense of Definition \ref{rnkH3B}.  Equivalently, the basis $1, v_1, v_2, v_3$ is a good basis if there exists $a,b, c$ in $R$ such that
\begin{equation}\label{gB1} v_2 v_3 = a v_1^*; \qquad v_3 v_1 = b v_2^*; \qquad v_1 v_2 = c v_3^*; \end{equation}
and
\begin{equation}\label{gB2} n(v_1) = bc; \qquad n(v_2) = ca; \qquad n(v_3) = ab.\end{equation}
\end{definition}
The formulas (\ref{gB1}) and (\ref{gB2}) are how good bases are defined in \cite{gL}.  That one can equivalently define good bases in terms of the $A$ of (\ref{A matrix}) being rank one is suggested by \cite[Proposition 2.5.2]{lucianovic}.  Gross-Lucianovic prove:
\begin{lemma}[\cite{gL}] \label{uniquexi} If $1, w_1, w_2, w_3$ are a basis of $\Lambda$, then there exist unique $x_1, x_2, x_3$ in $R$ so that $v_i:=w_i - x_i$ form a good basis. \end{lemma}
Let us very briefly sketch a proof.  Consider the structure coefficients of $\Lambda$, namely, those elements $s_{ij}^k$ in $R$ so that
\[w_i w_j = s_{ij}^0 + s_{ij}^1 w_1 + s_{ij}^2 w_2 + s_{ij}^3 w_3.\]
The key point is that the definition of the quaternion ring forces the matrix
\[S:=\left(\begin{array}{ccc} s_{23}^1 & s_{23}^2 & s_{23}^3 \\ s_{31}^1 & s_{31}^2 & s_{31}^3 \\ s_{12}^1 & s_{12}^2 & s_{12}^3 \end{array}\right)\]
to be \emph{symmetric}.  For example, to see that $s_{23}^2 = s_{31}^1$, one considers the trace of left multiplication by $w_3$ on $\Lambda$.  Using the expression for $w_2 w_3$ in terms of the structure constants, and conjugating it, one can obtain that the trace is $2\tr(w_3) + s_{31}^1-s_{23}^2$.  Consequently, $s_{31}^1 = s_{23}^2$.  One sets $x_1 = s_{31}^3 = s_{12}^2$, $x_2 = s_{12}^1 = s_{23}^3$, and $x_3 = s_{23}^2 = s_{31}^1$.  Then $v_i = w_i - x_i$ is the good basis.  In fact, if
\[W = \left(\begin{array}{ccc} & w_3 & w_2^* \\ w_3^* & & w_1 \\ w_2 & w_1^* & \end{array}\right),\]
then using the associativity of the multiplication in $\Lambda$ and that $1, v_1, v_2, v_3$ is a basis, one can check that $W-S$ is rank one.  The uniqueness of the $x_i$ is immediate.  For a slightly different proof, but more details, consult \cite{gL}. 

Suppose now that $v_1, v_2, v_3$ is a good basis, and $A$ is the rank one matrix in (\ref{A matrix}).  Associated to $A$, one can form the ternary quadratic form 
\[q_T(x,y,z) = ax^2 + by^2 + cz^2 + dyz+ ezx + fxy,\]
where $d = \tr(v_1)$, $e = \tr(v_2)$, $f = \tr(v_3)$.  Equivalently, if $2$ is not a zero-divisor in $R$, one can form the half-integral symmetric matrix
\begin{equation}\label{Tentries} T = \left(\begin{array}{ccc} a & \frac{f}{2} & \frac{e}{2} \\ \frac{f}{2} & b & \frac{d}{2} \\ \frac{e}{2} & \frac{d}{2} & c \end{array}\right).\end{equation}
The strength of the definition of good basis is that $q_T$ determines all the multiplication rules in the quaternion ring.  For instance, $v_1^2 = d v_1 - bc$, $v_1 v_2 = c v_3^* = c f - cv_3$, and
\[ v_2 v_1 = (e - v_2)(d-v_1) - de + ev_1 + dv_2 = - de + e v_1 + d v_2 + cv_3.\]

Conversely, starting with a ternary quadratic form $q_T(x,y,z) = ax^2 + by^2 + cz^2 + dyz + ezx + fxy$ on $R^3$, there is the data of a quaternion ring $\Lambda_T$ over $R$, together with a good basis of $\Lambda_T$, as follows.  Let us write $N = R^3$, with basis $e_1 = (1,0,0)$, $e_2 = (0,1,0), e_3 = (0,0,1)$.  One puts on $N$ the quadratic form
\[q_T(xe_1 + ye_2 + ze_3) = ax^2 + by^2 + cz^2 + dyz + ezx + fxy.\]
Then, one can construct the even Clifford algebra $C^+((N,q_T))$, defined as usual to be the even degree part of the tensor algebra of $N$, modulo the ideal generated by the relations $v^2 = q_T(v)$ for $v \in N$. $C^+((N,q_T))$ is a free $R$-module of rank $4$; it is the quaternion ring $\Lambda_T$.  The map $v_1v_2 \cdots v_r \mapsto v_r \cdots v_2v_1$ on the tensor algebra descends to an involution on $\Lambda_T$.  This involution is the conjugation $v \mapsto v^*$ on the quaternion algebra.  The preferred choice of basis of $\Lambda_T$ is the element $1$, together with $v_1 = e_2e_3, v_2 = e_3e_1, v_3 = e_1e_2$.  It is immediate that this is a good basis, because it is essentially the definition of the Clifford algebra
\[\left(\begin{array}{c} e_1 \\ e_2 \\ e_3 \end{array}\right) \left(\begin{array}{ccc} e_1 & e_2 & e_3 \end{array}\right) = \left(\begin{array}{ccc} a & v_3 & v_2^* \\ v_3^* & b & v_1 \\ v_2 & v_1^* & c\end{array}\right)\]
that the matrix on the right factorizes as on the left, and thus (heuristically, at least) is rank one.

If $\Lambda, (v_1,v_2, v_3)$, $\Lambda', (v_1',v_2',v_3')$ are two quaternion rings over $R$ with good bases, we say a map $\phi: \Lambda \rightarrow \Lambda'$ is an isomorphism of good-based quaternion rings if $\phi$ is an isomorphism of $R$-algebras, preserving the involution, and $\phi(v_i) = v_i'$, $i = 1,2,3$.  The constructions associating a ternary quadratic form $q_T$ to a good-based quaternion ring and the even Clifford algebra to a ternary quadratic form are inverse to one another.  Furthermore, there is a notion of ``reduced discriminant'' of quaternion rings, see \cite[pg. 8]{gL}, and similarly for ternary quadratic forms.
\begin{proposition}[Gross-Lucianovic \cite{gL}]\label{gLBij} Every quaternion ring has a good basis.  The even Clifford construction induces a discriminant preserving bijection between ternary quadratic forms $q_T$ and isomorphism classes of good-based quaternion rings. \end{proposition}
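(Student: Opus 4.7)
The plan is to establish the four claims in order: (i) every quaternion ring has a good basis; (ii) the even Clifford construction produces a good-based quaternion ring whose associated ternary form is $q_T$; (iii) every good-based quaternion ring is recovered from its form by the Clifford construction; (iv) discriminants match.

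For (i), I would simply invoke Lemma \ref{uniquexi}, whose proof has already been sketched: starting from any basis $1, w_1, w_2, w_3$ of $\Lambda$, the trace condition in the definition of a quaternion ring forces the symmetry of the structure-constant matrix $S$, and the diagonal of $S$ produces the unique translation $v_i = w_i - x_i$ turning the basis into a good one.

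For (ii), I would work inside $C^+(N, q_T)$ using the defining relations $e_i^2 = q_T(e_i)$ and $e_i e_j + e_j e_i = B_{q_T}(e_i, e_j)$ of the full Clifford algebra $C(N, q_T)$. Setting $v_1 = e_2 e_3$, $v_2 = e_3 e_1$, $v_3 = e_1 e_2$, one computes directly, for instance,
\[
v_2 v_3 = (e_3 e_1)(e_1 e_2) = e_3 \cdot q_T(e_1) \cdot e_2 = a\, e_3 e_2 = a\, v_1^*,
\]
and similarly $v_3 v_1 = b\, v_2^*$, $v_1 v_2 = c\, v_3^*$; this shows the basis is good with diagonal entries $a,b,c$. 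The computations $v_1 + v_1^* = e_2 e_3 + e_3 e_2 = B_{q_T}(e_2,e_3) = d$ and $v_1 v_1^* = e_2 e_3 e_3 e_2 = bc$ (and cyclic analogues) show that the ternary form read off from this good basis is precisely $q_T$, so the composition \{ternary forms\} $\to$ \{good-based rings\} $\to$ \{ternary forms\} is the identity.

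For (iii), the essential observation -- already highlighted in the exposition above -- is that a good basis makes the \emph{entire} multiplication table of $\Lambda$ a function of $q_T$. Given $v_2 v_3 = a v_1^*$ together with $v_i^* = \tr(v_i) - v_i$ and $v_i v_i^* = n(v_i) \in \{bc, ca, ab\}$, every product $v_i v_j$ and its reverse $v_j v_i$ can be expanded as an $R$-linear combination of $1, v_1, v_2, v_3$ with coefficients in $\Z[a,b,c,d,e,f]$ (for instance $v_3 v_2 = v_3(e - v_2^*) = ev_3 - (v_2 v_3^*)^* = -ef + av_1 + fv_2 + ev_3$). Consequently, if $(\Lambda, v_1, v_2, v_3)$ and $(\Lambda', v_1', v_2', v_3')$ are good-based quaternion rings giving the same $q_T$, the $R$-linear map $v_i \mapsto v_i'$ is an isomorphism of involutive $R$-algebras. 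Applied to $\Lambda' = C^+(N, q_T)$ with its good basis from (ii), this gives the reverse composition and completes the bijection. For (iv), one computes on either side in terms of $(a,b,c,d,e,f)$ and checks that the reduced discriminant of $\Lambda$ matches $\det(q_T) = 4abc + def - ad^2 - be^2 - cf^2$; this is a short but convention-sensitive calculation.

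The main obstacle is the bookkeeping for (iii): one must verify that the derived multiplication table is \emph{consistent} -- i.e., associativity of each triple product, compatibility of $v_j v_i$ with the prescribed traces and norms, and invariance of the involution -- purely formally from the identities built into the notion of good basis. Once this consistency is checked (or equivalently, once one knows via (ii) that at least one quaternion ring realizing $q_T$ exists), the isomorphism in (iii) is automatic and the bijection follows.
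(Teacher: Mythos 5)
Your proposal is correct and follows exactly the route the paper itself sketches around this (cited) proposition: existence of good bases via Lemma \ref{uniquexi}, the direct Clifford-algebra computation showing $v_1=e_2e_3$, $v_2=e_3e_1$, $v_3=e_1e_2$ form a good basis recovering $q_T$, and the observation that a good basis forces the entire multiplication table to be a function of $(a,b,c,d,e,f)$, so the two constructions are mutually inverse; your remark that associativity/consistency is automatic once the Clifford model exists is the right way to close that loop. The only piece left implicit is the discriminant comparison, which you correctly flag as a routine convention-dependent computation and which the paper likewise defers to \cite{gL}.
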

They furthermore give a $\GL_3(R)$ action on ternary quadratic forms, and on good bases, and then descend the bijection in the proposition to a bijection between $\GL_3(R)$ orbits of ternary quadratic forms and isomorphism classes of quaternion rings over $R$.

Because the bijection of Proposition \ref{gLBij} preserves discriminant, one obtains the following corollary.
\begin{corollary}\label{4detT} Suppose the half-integral symmetric matrix $T$ corresponds to a maximal order in a quaternion algebra $B$ over $\Q$.  Denote by $D_B = \prod_{p \text{ ram}}{p}$ the product of the primes $p$ for which $B \otimes \Q_p$ is nonsplit.  Then $4 \det(T) = \pm D_B$. \end{corollary}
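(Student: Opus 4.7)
The approach is to invoke the discriminant-preserving property of the Gross--Lucianovic bijection in Proposition~\ref{gLBij}, thereby reducing the identity $4\det(T) = \pm D_B$ to a matching of discriminant conventions on both sides.

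First I would compute the classical discriminant of the ternary form $q_T(x,y,z) = ax^2+by^2+cz^2+dyz+ezx+fxy$. A direct expansion of the $3\times 3$ determinant gives $\det T = abc + \tfrac{def}{4} - \tfrac{ad^2 + be^2 + cf^2}{4}$, so $4\det T = 4abc + def - ad^2 - be^2 - cf^2$, the classical discriminant of $q_T$. Next I would compute the discriminant of the quaternion order $\Lambda_T$ directly from the good basis $1, v_1, v_2, v_3$. Using the good-basis relations one has $n(v_1)=bc$, $n(v_2)=ca$, $n(v_3)=ab$ and $\tr(v_i)=d,e,f$; from $v_1v_2=c v_3^*$ one extracts the pairings such as $(v_1,v_2)=\tr(v_1 v_2^*)=de-cf$, and cyclically. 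Forming the $4\times 4$ Gram matrix $\bigl(\tr(e_ie_j^*)\bigr)_{0\le i,j\le 3}$ with $e_0=1$, $e_i=v_i$, a (tedious but routine) expansion gives the determinant $(4\det T)^2$, so that the reduced discriminant of $\Lambda_T$ is $|4\det T|$.

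Finally, I would invoke the standard theorem in the arithmetic of quaternion algebras that a maximal order $\Lambda\subset B$ over $\Q$ has reduced discriminant equal to $D_B$. Applying this with $\Lambda=\Lambda_T$ under our hypothesis yields $|4\det T|=D_B$, and hence $4\det T=\pm D_B$.

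The main obstacle is the bookkeeping in the $4\times 4$ Gram-matrix determinant. This can be bypassed by simply citing the discriminant-preserving portion of Proposition~\ref{gLBij}, which identifies the classical discriminant $4\det T$ of $q_T$ with the reduced discriminant of $\Lambda_T$. As a sanity check, the Hurwitz matrix in the introduction has $\det T=\tfrac12$, giving $4\det T=2=D_B$ for Hamilton's quaternions, while $T=1_3$ (the non-maximal order $\Z+\Z i+\Z j+\Z k$) gives $4\det T=4\ne 2=D_B$, as expected.
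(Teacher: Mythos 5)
Your proposal is correct and takes essentially the same route as the paper: the paper derives the corollary in one line from the discriminant-preserving property of the Gross--Lucianovic bijection (Proposition~\ref{gLBij}) together with the standard fact that a maximal order has reduced discriminant $D_B$. Your direct Gram-matrix computation is a fine backup but unnecessary, exactly as you note, and your sanity checks (Hurwitz order versus $\Z+\Z i+\Z j+\Z k$) are accurate.
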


We now work over $\Z$.  If $\Lambda$ is a quaternion ring, and $x \in \Lambda$, we denote by $\iIm(x)$ the element of $\Lambda \otimes \Q$ with $x = \tr(x)/2 + \iIm(x)$. The following lemma is the same as the uniqueness of the $x_i$ so that $w_i - x_i$ form a good basis.
\begin{lemma}\label{ImPartsRk1} Suppose $B$ a quaternion algebra over $\Q$, $A_1, A_2 \in H_3(B)$.  Suppose furthermore that the off-diagonal elements of $A_1$ span the three-dimensional $\Q$ vector space $B/(\Q \cdot 1)$, and similarly for $A_2$.  If $A_1, A_2$ are rank one matrices with $\iIm(A_1) = \iIm(A_2)$, then $A_1 = A_2$. \end{lemma}

Below we will require the following definition.
\begin{definition}\label{A(T)defn} Suppose $T$ is a half-integral symmetric matrix, with entries as in (\ref{Tentries}). Let $\Lambda_{T}, (v_1,v_2,v_3)$ be the associated quaternion ring with good basis.  We define $A(T)$ to be the matrix
\begin{equation}\label{A(T)eqn} A(T) = \left(\begin{array}{ccc} a & v_3 & v_2^* \\ v_3^* & b & v_1 \\ v_2 & v_1^* & c\end{array}\right).\end{equation}
One has that $A(T)$ is rank one in the sense of Definition \ref{rnkH3B}. \end{definition}

For further applications, one wants to understand suborders of quaternion orders, together with their good bases.  Let $T$ be a half-integral symmetric matrix, and $\Lambda_T, (v_1,v_2,v_3)$ the associated quaternion ring with its good basis.  Suppose $m \in M_3(\Z) \cap \GL_3(\Q)$. Consider the three elements $w_1, w_2, w_3 \in \Lambda_T$ defined by the matrix equation
\begin{equation}\label{w_i}\left(\begin{array}{ccc} w_1 & w_2 & w_3 \end{array}\right) := \left(\begin{array}{ccc} v_1 & v_2 & v_3 \end{array}\right)m.\end{equation}
That is, $w_i = \sum_j{m_{ji}v_j}$.  Now consider the lattice $\Lambda_T(m) \subset \Lambda_T$ spanned by $1, w_1, w_2, w_3$.  It is a natural question to ask when the lattice $\Lambda_T(m)$ is closed under multiplication.  Lemma \ref{AITlemma} below answers this question.
 
\begin{definition} For $m \in \GL_3$, denote $c(m) = \det(m)\,^t m^{-1}$ the cofactor matrix of $m$. \end{definition}

\begin{lemma}\label{AITlemma} Denote by $J_T$ the set of $3 \times 3$ Hermitian matrices 
\[\left(\begin{array}{ccc} c_1 & x_3 & x_2^* \\ x_3^* & c_2 & x_1 \\ x_2 & x_1^* & c_3\end{array}\right)\]
with $c_i \in \Z$ and $x_i \in \Lambda_T$.  For $m \in \GL_3(\Q)$, denote by $v_i'$ the off-diagonal entries of $m^{-1}A(T)c(m)$, and $\Lambda'$ the lattice in $\Lambda \otimes \Q$ spanned by $1$ and $v_1',v_2',v_3'$.  Consider the following four statements:
\begin{enumerate}
\item $\Lambda_T(m)$ is closed under multiplication;
\item $m^{-1}Tc(m)$ is half-integral;
\item $m^{-1}A(T)c(m)=:A'$ is in $J_T$.
\item $\Lambda'$ is closed under multiplication.
\end{enumerate}
Then, if $m \in M_3(\Z)$, all four statements are equivalent.  If 3 holds, then $m \in M_3(\Z)$. In the case these equivalent conditions are satisfied, $1, v_1', v_2', v_3'$ is a good basis of $\Lambda_T(m)$. \end{lemma}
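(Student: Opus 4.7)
The plan has three parts: (i) establish the structural identity $v_i' = w_i - x_i$ with $x_i \in \Q$, bridging the matrix and lattice viewpoints; (ii) translate each of (1)--(4) into arithmetic conditions on $x_i$ and on the diagonal entries $(a', b', c')$ of $A'$; (iii) close the final equivalence $(2) \Rightarrow (3)$ via a norm-parity argument.

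For (i), a direct calculation using $c(m)^T = \det(m) m^{-1}$ shows $A' = m^{-1} A(T) c(m)$ is Hermitian. It has rank one because $A' = \det(m) \cdot m^{-1} A(T) (m^{-1})^T$ is, up to the scalar $\det(m)$, the image of the rank-one matrix $A(T)$ under the $\GG$-automorphism $m(t)$ of Section \ref{sec:Freud} with linear part $b \mapsto (m^{-1})^T b m^{-1}$, which preserves the sharp. Decomposing $A(T) = T + N$ into its rational-symmetric and purely-imaginary-antisymmetric parts, identifying antisymmetric $3 \times 3$ matrices with their axial vectors, and using the cofactor identity $M a \times M b = \det(M) (M^{-1})^T (a \times b)$, one checks that the axial vector of $m^{-1} N c(m) = \det(m) m^{-1} N (m^{-1})^T$ equals $m^T (\iIm v_1, \iIm v_2, \iIm v_3)^T$. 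Hence $\iIm(v_i') = \sum_j m_{ji} \iIm(v_j) = \iIm(w_i)$, and one sets $x_i := w_i - v_i' \in \Q$.

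For (ii): if $A' \in J_T$ then $v_i' \in \Lambda_T$, so $\iIm(v_i') \in \iIm(\Lambda_T) = \bigoplus_j \Z \iIm(v_j)$ (the $\iIm v_j$ are $\Q$-linearly independent since $B$ is nonsplit), forcing $m_{ji} \in \Z$; this handles the implication (3) $\Rightarrow m \in M_3(\Z)$. Assume henceforth $m \in M_3(\Z)$, so $w_i \in \Lambda_T$ and $v_i' \in \Lambda_T \Leftrightarrow x_i \in \Z$. Expanding the rank-one multiplication rules $v_i' v_j' = a'_{ij} v_k'^*$ (cyclic, with $a'_{ij} \in \{a',b',c'\}$) and $v_i'^2 = \tr(v_i') v_i' - n(v_i')$ (using $n(v_i') \in \{a'b', a'c', b'c'\}$), together with $w_i w_j = (v_i' + x_i)(v_j' + x_j)$, direct inspection shows: (2) and (4) are each equivalent to $a', b', c' \in \Z$ together with $\tr(v_i') \in \Z$ (for (2), the diagonals of $m^{-1} T c(m)$ are $(a',b',c')$ and the off-diagonals are $\tr(v_i')/2$); while (3) and (1) are each equivalent to $a', b', c' \in \Z$ together with $x_i \in \Z$, which automatically forces $\tr(v_i') \in \Z$. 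So (1) $\Leftrightarrow$ (3), (2) $\Leftrightarrow$ (4), and (3) $\Rightarrow$ (2).

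The hard direction is (2) $\Rightarrow$ (3): given $\tr(v_i') \in \Z$ and $a', b', c' \in \Z$, one must show $x_i \in \Z$ (not merely $\tfrac{1}{2}\Z$). The key is the norm identity $n(v_i') = n(w_i - x_i) = n(w_i) - x_i \tr(w_i) + x_i^2$: combined with $n(v_i') \in \Z$ (from $n(v_i') \in \{a'b', a'c', b'c'\}$) and $n(w_i), \tr(w_i) \in \Z$ (since $w_i \in \Lambda_T$), this yields $x_i(x_i - \tr(w_i)) \in \Z$. From $\tr(v_i') \in \Z$ and $\tr(w_i) \in \Z$ one has $x_i = (\tr(w_i) - \tr(v_i'))/2 \in \tfrac{1}{2}\Z$; writing $x_i = k/2$ with $k \in \Z$ and $\ell = \tr(w_i)$, the condition becomes $k(k - 2\ell) \in 4\Z$. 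But if $k$ were odd then $k - 2\ell$ would also be odd and their product would be odd, violating divisibility by $4$; so $k$ is even and $x_i \in \Z$. This closes the chain of equivalences. Finally, under these conditions $x_i \in \Z$ yields $\Lambda' = \Lambda_T(m)$, and the rank-one relations encoded by $A' \in J_T$ exhibit $1, v_1', v_2', v_3'$ as a good basis of $\Lambda_T(m)$.
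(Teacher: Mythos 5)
Your proof is correct and follows essentially the same route as the paper: the cofactor identity giving $\iIm(v_i') = \iIm(w_i)$, the translation of each condition into integrality of $a',b',c'$, $\tr(v_i')$ and $x_i$, and the norm-parity argument forcing $x_i \in \Z$ are exactly the paper's own ingredients. The only differences are organizational: you prove $(1)\Leftrightarrow(3)$ by directly expanding the products $w_i w_j$ (where the paper instead invokes Lemmas \ref{uniquexi} and \ref{ImPartsRk1}), and you route the parity argument through $(2)\Rightarrow(3)$ rather than $(4)\Rightarrow(1)$.
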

Actually, we prove the following stronger statement: $(3)$ implies $(2)$; $(2)$ and $(4)$ are equivalent.  Suppose $(1)$ holds, and denote by $A_0$ the rank one matrix associated to the good basis of $\Lambda_T(m)$ obtained by translating the $w_i$ by integers.  Then $A_0 = A'$, and thus $\Lambda_T(m) = \Lambda'$.  Hence $(1)$ implies $(2)$, equivalently $(4)$, and if $m \in M_3(\Z)$, then $(1)$ implies $(3)$.  If $m \in M_3(\Z)$, then $(4)$ implies $\Lambda_T(m) = \Lambda'$, and thus $(4)$ implies $(1)$.
\begin{proof}[Proof of Lemma \ref{AITlemma}] Denote by $V_3$ the defining three-dimensional representation of $\GL_3$.  One has the equivalence $\wedge^2 V_3^* \otimes \det \simeq V_3$.  Suppose $x_1, x_2, x_3$ are indeterminates.  In coordinates, this equivalence of representations of $\GL_3$ amounts to the identity
\begin{equation*} \det(m) m^{-1} \left(\begin{array}{ccc} & x_3 & -x_2 \\ -x_3 & & x_1 \\ x_2 & -x_1 & \end{array}\right)\,^tm^{-1} = \left(\begin{array}{ccc} & x_3' & -x_2' \\ -x_3' & & x_1' \\ x_2' & -x_1' & \end{array}\right), \end{equation*}
where $x_i' = \sum_j{m_{ji} x_j}$.  Thus since $\iIm(A(T))$ is antisymmetric,
\begin{equation}\label{Im(w_i)}\iIm(A') = m^{-1}\iIm(A(T))c(m) = \left(\begin{array}{ccc} 0 & \iIm(w_3) & -\iIm(w_2) \\ -\iIm(w_3) & 0 & \iIm(w_1) \\ \iIm(w_2) & - \iIm(w_1) & 0 \end{array}\right).\end{equation}

Now, immediately item 3 implies item 2, and 2 and 4 are equivalent because $m^{-1}Tc(m)$ exactly describes the multiplication rules for $\Lambda'$ in $\Lambda \otimes \Q$.  Suppose now that item 1 holds.  Then one may change $w_1, w_2, w_3$ by integers to make them into a good basis. Hence we have $A_0, A'$ both rank one, and with the same imaginary parts by (\ref{Im(w_i)}).  Thus by Lemma \ref{ImPartsRk1}, $A_0 = A'$.  Since by definition the off-diagonal entries of $A_0$ generate $\Lambda_T(m)$, and the off-diagonal entries of $A'$ generate $\Lambda'$, we get $\Lambda_T(m) = \Lambda'$.  Hence item 1 implies 4 or equivalently 2.  If $m \in M_3(\Z)$, then this $A_0$ is in $J_T$, and hence when $m$ is integral, item 1 implies item 3.

Now suppose item 4 holds, and $m$ is integral. We have that $\iIm(w_i) = \iIm(v_i')$. Since we assume item 4, the $v_i'$ have integral trace, and since $m$ is integral, the $w_i$ have integral trace.  Hence $w_i - v_i' \in \frac{1}{2}\Z$.  We claim the difference is in $\Z$.  Indeed, write $v_i' = w_i + \ell/2$, with $\ell$ an integer.  Then 
\[2N(v_i') = 2(w_i + \ell/2)(w_i^* + \ell/2) = 2N(w_i) + \ell \tr(w_i) + \ell^2/2.\]
Again since $\Lambda'$ is closed under multiplication and $m$ is integral, $v_i'$ and $w_i$ have integral norm.  Hence $\ell^2/2$ is an integer, and thus $\ell$ is even.  Consequently, if $m$ is integral, then item 4 implies $\Lambda' = \Lambda_T(m)$, and hence 4 implies 1.

Finally, suppose item 3 holds. Then by definition, $v_i'$ is in $\Lambda_T$.  On the other hand, we have $\iIm(v_i') = \sum_{j}{m_{ji}\iIm(v_j)}$, and thus $v_i' = a_i + \sum_{j}{m_{ji}v_j}$ for some $a_i$ in $\Q$.  Since $v_i'$ is in $\Lambda_T$ and $1, v_1, v_2, v_3$ are a basis on $\Lambda_T$, it follows that the $a_i$ and $m_{ji}$ are in $\Z$, and thus $m \in M_3(\Z)$, as desired.\end{proof}

\begin{remark} The equivalence of (1), (2) and (4) in Lemma \ref{AITlemma} can also be deduced easily from Proposition 22.4.12 of \cite{voightBook}.  We thank John Voight for pointing this out to the author. \end{remark}

\begin{remark} Lemma \ref{AITlemma} suggests the following partial order on non-degenerate ternary quadratic forms.  Suppose $q_1(x,y,z)$ and $q_2(x,y,z)$ are integral ternary quadratic forms, with associated half-integral symmetric matrices $T_1$ and $T_2$, respectively.  We say that $q_i$ is non-degenerate to mean $\det(T_i) \neq 0$.  Define a partial order on the set of non-degenerate integral ternary quadratic forms by $q_1 \leq q_2$ if there exists $m \in M_3(\Z) \cap \GL_3(\Q)$ such that $T_1 = m^{-1}T_2 c(m)$.  Suppose $\Lambda_i$ is the even Clifford algebra associated to $q_i(x,y,z)$.  Then Lemma \ref{AITlemma} implies that there is an injective ring map $\Lambda_1 \rightarrow \Lambda_2$ if and only if $q_1 \leq q_2$.\end{remark}
\section{A Dirichlet Series for the Spin $L$-function}\label{sec:dirichlet}
In this section we combine Lemma \ref{AITlemma} with a result of Evdokimov \cite[Theorem 3]{evdokimov} to give a Dirichlet Series for the Spin $L$-function of level one Siegel modular forms on $\GSp_6$.  Most of the work has already been done in \cite{evdokimov}.  Namely, \cite[Theorem 3]{evdokimov} very nearly gives a Dirichlet series for this $L$-function, except this result of Evdokimov involves an ``error term'' that must be removed on the way to proving Theorem \ref{MainThm}\footnote{The result \cite[Theorem 4]{evdokimov} is a Dirichlet series for this $L$-function, but the hypotheses of this theorem of loc. cit. are too stringent to use in the proof of Theorem \ref{MainThm} for general quaternion algebras $B$ ramified at infinity; see Remark \ref{evDS}.}. However, it turns out that this error term can be trivialized by applying the arithmetic invariant theory of ternary quadratic forms, in the form of Lemma \ref{AITlemma}, together with the fact that $B_0$ is a maximal order.  We first recall precisely \cite[Theorem 3]{evdokimov}, and then apply Lemma \ref{AITlemma} to obtain Corollary \ref{dirSeriesAIT}, which gives the desired Dirichlet series.

To state \cite[Theorem 3]{evdokimov}, we need to introduce some notation, largely following \cite{evdokimov}. Denote by $P_6$ the Siegel parabolic of $\GSp_6$, i.e., the set of elements $\mm{A}{B}{C}{D}$ in $\GSp_6$ with $C=0$.  Set $S_p = \{M \in \GSp_6(\Z[p^{-1}]): \nu(M) \in p^\Z\}$, $\Gamma_0 = \Sp_6(\Z) \cap P_6(\Q)$, and $S_{0,p} = S_p \cap P_6(\Q)$.  Denote by $L_{0,p}$ the double coset ring of $\Gamma_0$ in $S_{0,p}$. 

We define some elements of $L_{0,p}$.  Set
\[\Pi_0(p) = \Gamma_0\diag(p,p,p,1,1,1)\Gamma_0 \text{ and } \Pi_1(p) =\Gamma_0 \diag(p,p,1,1,1,p)\Gamma_0.\]
Define
\[\Pi_{2,0}^0(p) = \Gamma_0 \diag(p^2,p,p,1,p,p)\Gamma_0 \text{ and } \Pi_{3,0}^0(p) = \Gamma_0 \diag(p,p,p,p,p,p)\Gamma_0\]
For integers $0 \leq r \leq i$, set $S_i^r$ to be the set of $A \in M_i(\Z)$ with $\,^t A = A$, and $rank_{\F_p}A = r$.  Define an equivalence relation on $S_i^r$ by $A \sim A'$ if there exists $U \in \GL_i(\Z)$ such that $A' \equiv \,^tU A U$ modulo $p$.  Now, define
\[ \Pi_{2,0}^1(p) = \sum_{A \in S_2^1 \slash \sim}{\Gamma_0 \left(\begin{array}{ccc|ccc} p^2 & & & & & \\  & p& & & a_{22} & a_{23}\\ & &p & &a_{23} &a_{33} \\ \hline  & & & 1& & \\  & & & & p& \\  & & & & & p\end{array}\right) \Gamma_0}\]
where $A = \mm{a_{22}}{a_{23}}{a_{23}}{a_{33}}$.  Similarly, define
\[ \Pi_{3,0}^1(p) = \sum_{A \in S_3^1 \slash \sim}{\Gamma_0 \left(\begin{array}{ccc|ccc} p & & & a_{11}& a_{12}& a_{13}\\  & p& & a_{12}& a_{22} & a_{23}\\ & &p &a_{13} &a_{23} &a_{33} \\ \hline  & & & p& & \\  & & & & p& \\  & & & & & p\end{array}\right) \Gamma_0}\]
where $A = (a_{ij})$.  Set $\Pi_1^2(p) = p\left(\Pi_{2,0}^1(p) + \Pi_{2,0}^0(p)\right)$ and $\Pi_{1}^3(p) = p^3\left(\Pi_{3,0}^1(p) + \Pi_{3,0}^0(p)\right)\Pi_0(p)$.

Denote by $\mathcal{F}_k$ the space of holomorphic functions $f: \mathcal{H}_3 \rightarrow \C$ that satisfy $f((AZ + B) D^{-1}) = \det(D)^k f(Z)$ for all $\mm{A}{B}{}{D} \in P_6(\Q) \cap \Sp_6(\Z)$.  The ring $L_{0,p}$ acts on $\mathcal{F}_k$ as follows.  Suppose $\Gamma_0 h \Gamma_0 \subseteq S_{0,p}$ is a double coset.  Then $\Gamma_0 h \Gamma_0$ has a single coset decomposition
\[\Gamma_0 h \Gamma_0 = \bigsqcup_i \Gamma_0 \left(\begin{array}{cc} A_i & B_i \\ & D_i \end{array}\right),\]
$\nu \left(\mm{A_i}{B_i}{}{D_i}\right) = p^{\delta_i}$.  One defines
\[ \left(f|\Gamma_0 h \Gamma_0\right)(Z) = \sum_i{ p^{(3k-6)\delta_i}(\det(D_i))^{-k}f((A_iZ+B_i)D_i^{-1})}.\]

Suppose $\phi$ is as in Definition \ref{holAssumption}, and $f_\phi(Z) = \sum_{T' > 0}{a(T') e^{2\pi i \tr(T'Z)}}$ is the associated level one Siegel modular form.  Let us write $L_{classical}(\pi,Spin,s) = L(\pi,Spin,s-3r+3)$ for the classically normalized Spin $L$-function of $\pi$. Define $Q_p^1(z) = 1 - \Pi_0(p) z$ and $Q_p^2(z) = 1 - \Pi_1(p) z + \Pi_1^2(p) z^2 - \Pi_1^3(p) z^3$, elements of $L_{0,p}[z]$, the polynomial ring over $L_{0,p}$ in the indeterminate $z$.  For $m \in \GL_3(\Q)$, define 
\[\Xi_T(m) = \begin{cases} 1 &\mbox{if } m^{-1}T c(m) \in M_3(\Z) \\ 0 &\text{otherwise} \end{cases}.\]
The following theorem is essentially \cite[Theorem 3]{evdokimov}.
\begin{theorem}[Evdokimov]\label{evdThm3} Let the notations be as above.  Set $M_3^+(\Z)$ to be the elements of $M_3(\Z)$ with positive determinant. Then
\begin{equation}\label{evdExp}\sum_{\lambda \geq 1, m \in M_3^+(\Z) \slash \SL_3(\Z)}{\frac{\Xi_T(m)}{\lambda^s \det(m)^{s-2r+3}} a(\lambda m^{-1}Tc(m))} = C_{f_\phi}(s,T) \frac{L_{classical}(\pi,Spin,s)}{\zeta_{D_B}(2s-6r+8)\zeta(2s-6r+6)},\end{equation}
where
\[ \zeta_{D_B}(s) = \prod_{p < \infty, p \nmid D_B}{\left(1 - p^{-s}\right)^{-1}}\] 
and $C_{f_\phi}(s,T)$ is the $T$'th Fourier coefficient of
\[f_\phi \left| \prod_{p| 4 \det(T) }{Q_p^1(p^{-s})Q_p^2(p^{-s})}.\right.\]
\end{theorem}
The function $C_{f_\phi}(s,T)$ in Theorem \ref{evdThm3} is the ``error term'' that we eluded to above.  By applying Lemma \ref{AITlemma}, we will see that the hypothesis that $T$ corresponds to a maximal order implies that $C_{f_\phi}(s,T) = a(T)$ is independent of $s$.  Hence under this assumption, the left-hand side of (\ref{evdExp}) yields a Dirichlet series for the Spin $L$-function.  

If $\mathcal{O}$ is an order in some quaternion algebra over $\Q$ ramified at infinity, then $\mathcal{O} = \Lambda_{T'}$ is the even Clifford algebra associated to some half-integral, symmetric, positive-definite matrix $T'$.  We write $a(\mathcal{O}) := a(T')$.  Since $a(T') = a(k^{-1} T' c(k))$ for $k \in \SL_3(\Z)$, the notation $a(\mathcal{O})$ is independent of the choice of $T'$, and thus well-defined. We have the following corollary of Lemma \ref{AITlemma} and Theorem \ref{evdThm3}.
\begin{corollary}\label{dirSeriesAIT} Suppose $\pi$ is as in Definition \ref{holAssumption}, $f_\phi(Z) = \sum_{T' > 0}{a(T') e^{2\pi i \tr(T'Z)}}$ is the Fourier expansion of the associated Siegel modular form, and the even Clifford algebra $B_0$ associated to $T$ is a maximal order in $B_0 \otimes \Q$.  Then
\begin{equation}\label{dSAIT} \sum_{\lambda \geq 1, \mathcal{O} \subseteq B_0} \frac{a(\Z + \lambda \mathcal{O})}{\lambda^s [B_0: \mathcal{O}]^{s-2r+3}} = a(B_0) \frac{L_{classical}(\pi,Spin,s)}{\zeta_{D_B}(2s-6r+8)\zeta(2s-6r+6)}, \end{equation} 
where $[B_0: \mathcal{O}]$ denotes the index of $\mathcal{O}$ in $B_0.$ \end{corollary}
\begin{proof} Applying Lemma \ref{AITlemma}, the left-hand side of (\ref{dSAIT}) is seen to be equal to the left-hand side of (\ref{evdExp}).  

The key point of the corollary is that, as mentioned above, $B_0$ being a maximal order implies $C_{f_\phi}(s,T) = a(T) = a(B_0)$.  To see this equality, consider the action of the operators $\Pi_0(p),$ $\Pi_1(p)$, $\Pi_1^2(p)$ and $\Pi_1^3(p)$ on the Fourier coefficients of $f_\phi$.  Suppose $\Pi(p)$ is one of these operators.  For a quaternion order $\mathcal{O}$ and a Siegel modular form $f$, set $a(\mathcal{O}, f|\Pi(p))$ the $\mathcal{O}$ Fourier coefficient of $f|\Pi(p)$.  Applying Lemma \ref{AITlemma}, one verifies that $a(\mathcal{O}, f|\Pi(p))$ is a linear combination of $a(\mathcal{O}', f)$ for certain orders $\mathcal{O}' \supseteq \mathcal{O}$ with $[\mathcal{O}':\mathcal{O}] = p, p^2$ or $p^3$.  Since $B_0$ is a maximal order, $a(B_0, f|\Pi(p)) = 0$, and the equality $C_{f_\phi}(s,T) =a(B_0)$ follows. \end{proof}

\begin{remark}\label{evDS} In \cite[Theorem 4]{evdokimov}, Evdokimov also uses his result \cite[Theorem 3]{evdokimov} to obtain a Dirichlet series for the Spin $L$-function, but only under the assumption that $f_\phi$ satisfies $a(T') = 0$ for all $T'$ with $4\det(T')$ strictly dividing $4\det(T)$.\end{remark}
\section{The Rankin-Selberg integral}\label{sec:Global}
In this section we define the map $\GSp_6 \rightarrow \GG$ and the Eisenstein series on $\GG$.  We also give the global Rankin-Selberg convolution and unfold it.
\subsection{Construction of global integral}\label{gsp6toG}
We first embed $\GSp_6$ into $\GG$, preserving the similitude.  Denote by $W_6$ the defining six-dimensional representation of $\GSp_6$, and $\langle \; , \; \rangle$ the invariant symplectic form.  We let $\GSp_6$ act on $W_6$ on the right.  In matrices, $\GSp_6$ is the set of $g \in \GL_6$ satisfying $g J_6 \,^t g = \nu(g) J_6$, where $J_6 = \mm{}{1_3}{-1_3}{}$ and the similitude $\nu(g) \in \GL_1$.  The action of $\GSp_6$ on $W_6$ is then the usual right action of matrices on row vectors.

Pick a symplectic basis $e_1, e_2, e_3, f_1, f_2, f_3$ of $W_6$; that is, $\langle e_i , f_j \rangle = \delta_{ij}$.  We fix a quaternion algebra $B$ over $\Q$ that is ramified at infinity.  For such a $B$, denote by $W = \Q \oplus H_3(B) \oplus H_3(B) \oplus \Q$ Freudenthal's space, as considered in section \ref{sec:Freud}.  Now consider the linear map $W \rightarrow (\bigwedge^3 W_6\otimes \nu^{-1}) \otimes_\Q B$ defined as follows:  Set $e_i^* = e_{i+1} \wedge e_{i+2}$ and $f_i^* = f_{i+1} \wedge f_{i+2}$ with the indices taken modulo $3$.  Then we map $(a,b,c,d)$ in $W$ to
\[ a e_1 \wedge e_2 \wedge e_3 + \left(\sum_{i,j}{b_{ij}e_i^* \wedge f_j}\right) +\left(\sum_{i,j}{ c_{ij}f_i^* \wedge e_j}\right) + d f_1 \wedge f_2 \wedge f_3.\]
With this definition, and the natural action of $\GSp_6$ on $\bigwedge^3 W_6 \otimes \nu^{-1}$, $\GSp_6$ preserves the image of $W$, and preserves the symplectic and quartic form on $W$, up to similitude.  These facts are checked in the following lemma.
\begin{lemma} Identify $W$ with its image in $\left(\bigwedge^3 W_6 \otimes \nu^{-1}\right) \otimes_\Q B$.  Then, the element $\mm{1}{X}{}{1}$ of $\GSp_6$ acts on $W$ as $n(X)$, and the element $\mm{0}{1}{-1}{0}$ acts on $W$ via $(a,b,c,d) \mapsto (-d,c,-b,a)$.  If $\mm{m}{}{}{n} \in \GSp_6$ has similitude $\nu$, then
\[(a,b,c,d)\mm{m}{}{}{n} = \nu^{-1}(\det(m)a, \det(m) m^{-1}b n, \det(n)n^{-1}cm,\det(n)d).\]
It follows that the inclusion $W \rightarrow \left(\bigwedge^3 W_6 \otimes \nu^{-1}\right) \otimes_\Q B$ induces an inclusion $\GSp_6 \rightarrow \GG$, preserving similitudes. \end{lemma}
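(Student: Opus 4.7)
The plan is to verify the three displayed identities by direct computation on a generating set of $\GSp_6$, and then deduce the global statement. The Siegel unipotent subgroup $\{\mm{1}{X}{}{1} : X = X^T\}$, the long Weyl element $J_6 = \mm{}{1}{-1}{}$, and the Levi $\{\mm{m}{}{}{n} : mn^T = \nu I\}$ together generate $\GSp_6$. Once the action on the image of $W$ in $(\bigwedge^3 W_6 \otimes \nu^{-1})\otimes_\Q B$ is checked on these three generators and found to land in the image of $W$ with the prescribed formula, one automatically obtains a homomorphism $\GSp_6 \to \GL(W)$. Its image lies in $\GG$ because the symplectic form $\langle\cdot,\cdot\rangle$ and the quartic form $Q$ on $W$ are visibly restrictions of natural invariants on $(\bigwedge^3 W_6 \otimes \nu^{-1})\otimes_\Q B$, both of which are preserved by $\GSp_6$ up to the appropriate power of the similitude.

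For the unipotent $\mm{1}{X}{}{1}$, the right action sends $e_i \mapsto e_i + \sum_k X_{ik} f_k$ and fixes every $f_j$. Expanding each basis wedge of the image of $W$ and sorting by degree in $X$, the terms of degree $0,1,2,3$ produce, respectively, the four components of $n(X)(a,b,c,d)$: $a$, $b+aX$, $c + b\times X + aX^\#$, and $d + \tr(c,X) + \tr(b,X^\#) + aN(X)$. The identities to invoke are the cyclic convention $e_i^* = e_{i+1}\wedge e_{i+2}$, the observation that the $(i,j)$-entry of $X^\#$ matches the signed $2\times 2$ minor appearing in the degree-$2$ term $e_i \wedge (Xe_{i+1})\wedge(Xe_{i+2})$, and that $(Xe_1)\wedge(Xe_2)\wedge(Xe_3) = \det(X) f_1\wedge f_2\wedge f_3 = N(X) f_1\wedge f_2\wedge f_3$ when $B = F$ (the quaternionic case follows by matching coefficient-by-coefficient against the explicit formula for $h^\#$). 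The element $J_6$ sends $e_i \mapsto f_i$ and $f_j \mapsto -e_j$, giving instantly $e_1\wedge e_2\wedge e_3 \mapsto f_1\wedge f_2\wedge f_3$, $e_i^*\wedge f_j \mapsto -f_i^*\wedge e_j$, $f_i^*\wedge e_j \mapsto e_i^*\wedge f_j$, and $f_1\wedge f_2\wedge f_3 \mapsto -e_1\wedge e_2\wedge e_3$, i.e., $(a,b,c,d)\mapsto(-d,c,-b,a)$.

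For the Levi $\mm{m}{}{}{n}$, computing $\bigwedge^3(m\oplus n)$ on basis vectors produces scalars $\det(m)$ and $\det(n)$ on $e_1\wedge e_2\wedge e_3$ and $f_1\wedge f_2\wedge f_3$ respectively, and on the mixed wedges one uses $\bigwedge^2 m = \det(m)(m^{-1})^T$ to identify the induced actions on $b$ and $c$ as $b\mapsto\det(m)\,m^{-1}bn$ and $c\mapsto\det(n)\,n^{-1}cm$. Multiplying by the twist $\nu^{-1}$ yields the stated formula, and the relation $\det(m)\det(n) = \nu^3$ (which follows from $mn^T=\nu I$) is what makes a direct check show that $Q$ scales by $\nu^2$ and $\langle\cdot,\cdot\rangle$ by $\nu$, confirming membership in $\GG$ with similitude $\nu$.

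For the final clause, the image of $W$ in $\bigwedge^3 W_6 \otimes_\Q B$ already spans $(\bigwedge^3 W_6)\otimes 1$ over $F$, since the diagonal scalar entries of $b$ and $c$ produce every basis wedge of mixed type; therefore any $g \in \GSp_6$ acting trivially on $W$ acts as the scalar $\nu(g)$ on $\bigwedge^3 W_6$, which by a standard eigenvalue argument forces $g$ to be a scalar matrix $\lambda I$, and then comparing similitudes forces $\lambda=1$. Matching similitudes across the three generators ($1$, $1$, and $\nu$) confirms that $\GSp_6 \to \GG$ preserves similitudes. The real combinatorial obstacle is the unipotent computation, namely keeping track of the many signs from the cyclic convention in the degree-$2$ and degree-$3$ expansions, and matching the quaternionic $h^\#$ term by term against the wedge output; this is mechanical once conventions are fixed.
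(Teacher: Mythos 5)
Your proposal is correct and has the same overall skeleton as the paper's proof: verify the three formulas generator by generator, with $J_6$ immediate and the Levi handled via the cofactor identity $e_i^*\mapsto\sum_j c(m)_{ij}e_j^*$, $c(m)=\det(m)\,^tm^{-1}$. The one place where you genuinely diverge is the step you yourself flag as the combinatorial bottleneck, namely $\mm{1}{X}{}{1}$. You expand the action on each basis wedge and sort by degree in $X$, which obliges you to match the degree-two and degree-three outputs against the full quadratic map $\#$ and the cubic norm $N$ (with quaternionic $b$ entering the cross terms $b\times X$ and $\tr(b,X^\#)$ even though $X$ itself is scalar). The paper instead passes to logarithms: writing $L(X)$ for the logarithm of $n(X)$ on $W$, so that $(a,b,c,d)L(X)=(0,aX,b\times X,\tr(c,X))$ and $n(X)=1+L(X)+L(X)^2/2+L(X)^3/6$, and $L'(X)$ for the logarithm of the action on the wedge space, it suffices to check the single first-order identity $L'(X)=L(X)$ on $W$; the only nontrivial piece is $(0,b,0,0)L'(X)=(0,0,b\times X,0)$, which reduces to the \emph{linearization} of the formula for $h^\#$ rather than to $h^\#$ and $N$ themselves, and the higher-degree matchings you describe then come for free by exponentiating. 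One further caution: your claim that the image lies in $\GG$ because $Q$ is ``visibly the restriction of a natural invariant'' of $\bigl(\bigwedge^3 W_6\otimes\nu^{-1}\bigr)\otimes_\Q B$ is not really visible for the quartic form; the cleaner route, implicit in the paper, is that the three displayed formulas identify the generators with $n(X)$, $J$, and an element of the form $m(t)$ composed with central similitude maps, all of which were already shown in Section~2 to lie in $\GG$, with similitudes multiplying out to $\nu$. Your explicit injectivity argument is a point the paper leaves unspoken and is a reasonable addition.
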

\begin{proof} The formula for the action of $\mm{}{1}{-1}{}$ is immediate.  We check some of the other formulas.

Consider the action of $\mm{m}{}{}{n}$.  One first computes that under the action of $m$, $e_i^* \mapsto \sum_j{c(m)_{ij} e_j^*}$,  where $c(m)= \det(m)\,^t m^{-1}$ is the cofactor matrix of $m$.  Ignoring the twist by the similitude $\nu^{-1}$ in $\wedge^3 W_6 \otimes \nu^{-1}$, one then obtains
\begin{align*} \sum_{i,j}{b_{ij} e_i^* \wedge f_j} & \mapsto \sum_{i, j, t, s}{b_{ij} c(m)_{it} n_{js} e_t^* \wedge f_s} \\ & = \sum_{t, s}{(\,^tc(m) b n)_{t s} e_t^* \wedge f_s}. \end{align*}
This proves the formula for the action of $\mm{m}{}{}{n}$ on the $b$'s of $(a,b,c,d)$.  The action on the $c$'s is similar, and the action on the $a$'s and $b$'s are trivial.

For $X$ in $H_3(B)$, denote by $L(X)$ the logarithm, inside $\mathrm{End}(W)$, of the element $n(X)$ of $\GL(W)$.  That is, set $L(X) = \sum_{m \geq 1}{(-1)^{m-1}\frac{(n(X)-1)^{m}}{m}}.$  From the definition (\ref{n(X)form}) of $n(X)$, one sees $(n(X) - 1)^4 = 0$, and thus the sum defining $L(X)$ terminates after $4$ terms.  One gets
\[(a,b,c,d)L(X) = (0, a X, b \times X, \tr(c,X))\]
and $n(X) = 1 + L(X) + L(X)^2/2 + L(X)^3/6.$  

Similarly, for $\mm{1}{X}{}{1}$ in $\GSp_6$, denote by $n_6(X)$ its action on $W_6$, $n'(X)$ its action on $\left(\wedge^3 W_6 \otimes \nu^{-1}\right) \otimes B$, and $L_6(X)$, respectively $L'(X)$, the logarithms of these actions.  Since $n_6(X) = 1 + L_6(X)$, and the action of $n(X)$ on $\wedge^3 W_6$ is defined by cubic polynomials in the entries of $\mm{1}{X}{}{1}$, one obtains $n'(X) = 1 + L'(X) + L'(X)^2/2 + L'(X)^3/6.$  Hence to check that $n'(X)$ restricts to $n(X)$ on $W$, it suffices to check the analogous statement for $L'(X)$ and $L(X)$.

So, suppose $X$ is a $3 \times 3$ symmetric matrix.  We consider the action $(0,b,0,0)L'(X)$.  One has
\[\left( e_i^* \wedge f_j\right) L'(X) = \sum_k{X_{i+1, k} f_k \wedge e_{i+2} \wedge f_j + X_{i+2, k} e_{i+1} \wedge f_k \wedge f_j},\]
where $X_{a,b}$ denotes the $a,b$ matrix entry of $X$.  Hence
\begin{align}\label{bL'(X)} \left(\sum_{ij}{b_{ij} e_i^* \wedge f_j} \right) L'(X) &= \sum_{j, k, \ell}{\left( b_{\ell + 1, j} X_{\ell -1, k} - X_{\ell+1, k} b_{\ell -1, j}\right) f_j \wedge f_k \wedge e_\ell} \\ &= \sum_{\alpha, \beta}{V_{\alpha, \beta} f_{\alpha}^* \wedge e_\beta} \nonumber \end{align}
for some element $V$ of $M_3(B)$.  For a general element $Y$ of $M_3(B)$, denote by $c_i(Y), a_i(Y), a_i(Y)'$ the coordinates of $Y$:
\[Y = \left(\begin{array}{ccc} c_1(Y) & a_3(Y) & a_2(Y)' \\ a_3(Y)' & c_2(Y) & a_1(Y) \\ a_2(Y) & a_1(Y)' & c_3(Y) \end{array}\right).\]
Then (\ref{bL'(X)}) yields
\begin{align}\label{Vcoords} c_t(V) &= c_{t-1}(X)c_{t+1}(b) + c_{t+1}(X)c_{t-1}(b) - (a_t(X),a_t(b)), \nonumber\\ 
a_t(V) &= a_{t+1}'(b) a_{t-1}'(X) + a_{t+1}'(X) a_{t-1}'(b) - \left(c_t(X) a_t(b) + c_t(b) a_t(X)\right), \\ a_t'(V) &= a_{t+1}(b) a_{t-1}(X) + a_{t+1}(X) a_{t-1}(b) - \left(c_t(X) a_t'(b) + c_t(b) a_t'(X)\right). \nonumber \end{align}
The indices in (\ref{Vcoords}) are taken modulo $3$.  Linearizing the equation (\ref{hSharp}) for the map $\#$ yields exactly the formulas (\ref{Vcoords}), and thus $V = b \times X$.  Hence $(0,b,0,0)L'(X) = (0,0, b\times X, 0)$.

The formulas $(a,0,0,0) L'(X) = (0,a X, 0,0)$ and $(0,0,c,d)L'(X) = (0,0,0,\tr(c,X))$ are much easier to establish.  Hence $L'(X)$ preserves $W$, and restricts to the action of $L(X)$.  This completes the proof of the lemma. \end{proof}

We need to define an integral structure on $W$.  To do this, first we fix a maximal order $B_0$ in $B$.  Then
\begin{equation}\label{W(Z)} W(\Z) := W(\Q) \cap \left\{\left(\wedge^3 W_6(\Z) \otimes \nu^{-1}\right) \otimes_\Z B_0\right\},\end{equation} and for a finite prime $p$, $W(\Z_p) := W(\Z)\otimes \Z_p$.  Define $J_0$ to be the lattice in $H_3(B)$ consisting of those
\begin{equation}\label{J_0}\left(\begin{array}{ccc} c_1 & x_3 & x_2^* \\ x_3^* & c_2 & x_1 \\ x_2 & x_1^* & c_3\end{array}\right)\end{equation}
with $c_i \in \Z$ and $x_i \in B_0$.  Then $(a,b,c,d) \in W(\Q)$ is in $W(\Z)$ if and only if $a, d \in \Z$ and $b, c \in J_0$.  It is clear from (\ref{W(Z)}) that $\GSp_6(\Z_p)$ stabilizes $W(\Z_p)$ for all finite primes $p$.

We now define the Eisenstein series on $\GG$.  Recall the rank one element $f = (0,0,0,1)$ of $W$.  Denote by $P$ the parabolic subgroup of $\GG$ stabilizing the line $\Q f$.  The Eisenstein series will be associated to a character of $P$.  To define this Eisenstein series, we select a particular Schwartz-Bruhat function $\Phi = \prod_{v}\Phi_v$ on $W(\A)$.  At finite primes, $\Phi_p$ is the characteristic function of $W(\Z_p)$.  In section \ref{arch} will define a Schwartz function $\Phi_{\infty,2r}$ on $W(\R)$ that is appropriate for cusp forms $\phi$ that correspond to a Siegel modular form of weight $2r$.  The Schwartz function $\Phi_{\infty,2r}$ will be a Gaussian times a polynomial of degree $2r$.  We set
\[f^\Phi(g,s) = |\nu(g)|^s \int_{\GL_1(\A)}{\Phi(tfg)|t|^{2s}\,dt},\]
and 
\[E^\Phi(g,s) = \sum_{\gamma \in P(\Q) \backslash \GG(\Q)}{f^\Phi(\gamma g,s)}.\]
This is not yet the normalized Eisenstein series.  Assume $\Phi_{\infty,2r}$ is chosen as in section \ref{arch}, and denote by $D_B = \prod_{p < \infty, \text{ ramified}}{p}$ the product of the finite primes ramified in $B$.  Then the normalized Eisenstein series is
\begin{align*} E_{2r}^*(g,s) =& \pi^{-(s+r)}D_B^s \Gamma_\R(2s+2r-4)\Gamma_\R(2s+2r-8) \left(\prod_{p < \infty \text{ split}}{\zeta_p(2s-2)\zeta_p(2s-4)}\right)\\ &\times \left(\prod_{p < \infty \text{ ramified}}{\zeta_p(2s-4)} \right) E^\Phi(g,s). \end{align*}
Here, as usual, $\Gamma_\R(s) = \pi^{-s/2}\Gamma(s/2)$.  The following theorem is proved in section \ref{Eisenstein}.
\begin{theorem}\label{NormEis} The Eisenstein series $E_{2r}^*(g,s)$ has finitely many poles, all contained in the set $\frac{1}{2} \Z \cap [0,5]$, and satisfies the functional equation $E_{2r}^*(g,s) = E_{2r}^*(g,5-s)$. \end{theorem}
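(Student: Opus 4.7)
By Langlands' theory of constant terms, the meromorphic continuation and functional equation of a (degenerate) Eisenstein series on a self-associate maximal parabolic reduce to the analogous statements for its constant term. Since $P \subset \GG$ is maximal with abelian unipotent radical $N \cong J$, and since the longest Weyl element of the type-$D_6$ root system fixes the Dynkin diagram, $P$ is self-associate and the constant term of $E^\Phi(g,s)$ along $P$ takes the form
\[f^\Phi(g,s) + M(s)f^\Phi(g,s),\]
where $M(s)$ is the standard intertwining operator attached to the longest Weyl element $w_P$ with $w_P^{-1}Pw_P = P$. The plan is thus to show that the normalized intertwiner $M^*(s)$, obtained from $M(s)$ by incorporating the prefactors in the definition of $E_{2r}^*$, intertwines $f^{\Phi}(g,s)$ with $f^{\Phi}(g,5-s)$ and is holomorphic away from finitely many points of $\frac{1}{2}\Z\cap[0,5]$. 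This will be done place-by-place, and the archimedean place is the main obstacle.

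Next, factor $M(s) = \prod_v M_v(s)$. The Tate-style definition of $f^{\Phi_v}$ rewrites $M_v(s)f^{\Phi_v}$ as a similar integral against the Fourier transform of $\Phi_v$ along $N$. At a split finite prime, the Fourier transform of the characteristic function of $W(\Z_p)$ equals itself up to a unit, and the remaining $\GL_1(\Q_p)$-integral decomposes along the $\GL_1$-orbits on $J(\Q_p)$ stratified by the rank function of Definition \ref{rnkH3B}, yielding the two local zeta factors $\zeta_p(2s-2)\zeta_p(2s-4)$; the normalization inverts this pair into its image at $5-s$. At a ramified prime an analogous but simpler calculation yields only $\zeta_p(2s-4)$ together with a power of $p$ absorbed into $D_B^s$. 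This gives $s \mapsto 5-s$ invariance at every finite place and shows that no finite-place factor contributes poles after normalization.

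The archimedean computation is the main obstacle. I would first treat the scalar case $r=0$: the Gaussian $\Phi_{\infty,0}$ on $W(\R)$ equals its own Fourier transform, and the radial Mellin integral over $\GL_1(\R)^+$ evaluates to $\pi^{-s}\Gamma_\R(2s-4)\Gamma_\R(2s-8)$, supplying both the archimedean normalization and the $r=0$ functional equation. For general $r$, I would use Maass-Shimura-type differential operators on $\mathcal H_3$ -- equivalently, raising operators in the complexified Lie algebra of $K_{\infty,\Sp_6}$ -- to generate the weight-$2r$ section $\Phi_{\infty,2r}$ from $\Phi_{\infty,0}$. Such operators are known to shift $s \mapsto s+r$ in the Gamma arguments and to contribute an extra factor of $\pi^{-r}$, matching the $\pi^{-(s+r)}\Gamma_\R(2s+2r-4)\Gamma_\R(2s+2r-8)$ in the normalization; their compatibility with $M_\infty(s)$ propagates the $r=0$ functional equation to all $r \geq 0$.

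Assembling the local contributions yields $E_{2r}^*(g,s) = E_{2r}^*(g,5-s)$. For the pole statement, the normalization makes all local factors entire at the finite places, so every pole of $E_{2r}^*$ must originate from the archimedean Mellin expression; since $\Gamma_\R$ has simple poles only at non-positive integers and the Maass-Shimura shifts are half-integer translations, the poles lie in $\frac{1}{2}\Z$, and they are confined to $[0,5]$ by the half-plane of absolute convergence combined with the functional equation.
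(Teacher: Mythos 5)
Your overall architecture --- prove the $r=0$ case via Langlands' theory of the constant term and functional equation, then propagate to all $r$ with Maass--Shimura type differential operators --- is the same as the paper's, and the differential-operator half is correct in outline: the paper constructs $\mathcal D\in\mathcal U(\mathfrak g)\otimes\C$ from the norm form on $J$ conjugated by a Cayley element and proves $\mathcal D E_{2r}^*(g,s)=E_{2r+2}^*(g,s)$ with no shift in $s$, the crux being the polynomial identity $(s+r+2)(s+r+1)-9(s+r+1)+15=(s+r-2)(s+r-4)$ that matches the ratio of the normalizing Gamma factors; your sketch asserts this compatibility but does not verify it. The $r=0$ half, however, has two genuine gaps. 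First, the constant term of $E^\Phi(g,s)$ along $P$ is not $f^\Phi(g,s)+M(s)f^\Phi(g,s)$: the relative root system of $\GG$ is of type $C_3$ (Proposition \ref{typeC3}) and $P$ is the Siegel parabolic, so $P(\Q)\backslash\GG(\Q)/P(\Q)$ has four Bruhat cells (indexed by the rank of elements of $J$), and the middle terms of the constant term are not sections of $\operatorname{Ind}_P^{\GG}(\chi_\bullet)$. The paper instead computes the constant term along the minimal parabolic $P_0$ as a sum over the eight cosets in $P_0(\Q)\backslash\GG(\Q)/P(\Q)$, using the Gindikin--Karpelevich factorization into rank-one intertwining operators; the essential local input, which your sketch never engages, is the rank-one computation for the short roots, which live in a $\GL_2(B)$ inside $\GG$ and produce the quaternionic factors $\zeta_{B_v}$ (Lemma \ref{GL2(B)}) --- this is exactly where the split/ramified dichotomy and the power $D_B^s$ enter. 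Your alternative of computing $M_v(s)f^{\Phi_v}$ via the Fourier transform of $\Phi_v$ along the abelian radical is plausible in principle, but as stated your predicted local factors are wrong (at a split prime the denominator of $c_v(s)$ is $\zeta_v(2s)\zeta_v(2s-2)\zeta_v(2s-4)$, three factors, not two) and the "rank stratification" integral is never carried out.

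Second, the pole argument does not work. It is false that the poles must "originate from the archimedean Mellin expression" because each finite local factor is entire: the poles of the Eisenstein series are those of its constant term, which is a finite sum of terms containing global completed zeta functions such as $\Lambda_\Q(2s-5)$, $\Lambda_\Q(2s-7)$, $\Lambda_\Q(2s-9)$, and these have poles even though every Euler factor is entire. The finiteness of the poles and their confinement to $\tfrac12\Z\cap[0,5]$ is read off from the explicit list of $c$-functions $c_1(s),\dots,c_8(s)$ after multiplying in the normalization; meromorphy in a strip plus the functional equation does not bound the number of poles. Relatedly, the archimedean Mellin transform of the Gaussian supplies only one Gamma factor (essentially $\Gamma(s+r)$, i.e.\ $\Gamma_\R(2s+2r)$ up to constants); the factors $\Gamma_\R(2s+2r-4)\Gamma_\R(2s+2r-8)$ are not produced by that integral but are inserted by hand, and verifying that they close up the functional equation requires the archimedean intertwining computation together with the final bookkeeping $D_B^{8-2s}\prod_{p\mid D_B}(-p^{2s-8})\cdot\sin(\pi(s-3))/\sin(\pi s)=1$, which uses that $B$ is ramified at an even number of places --- a step with no counterpart in your sketch.
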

For a cusp form $\phi$ associated to a level one, holomorphic Siegel modular form of weight $2r$, the global integral is
\[I_{2r}^*(\phi,s) = \int_{\GSp_6(\Q)Z(\A) \backslash \GSp_6(\A)}{\phi(g)E_{2r}^*(g,s) \,dg}.\]
By Theorem \ref{NormEis}, $I_{2r}^*(\phi,s) = I_{2r}^*(\phi,5-s)$.
\begin{remark} This is the first place where defining the group $\GG$ in terms of the Freudenthal construction gives one a substantial technical advantage.  Namely, because we have defined $\GG$ in terms of the representation $W$, we were able to construct the Eisenstein section $f^\Phi$ in terms of a characteristic function on $W$.  We will see in section \ref{unram} that this definition plays a significant role in easing the computation. \end{remark}
\subsection{Unfolding}\label{Unfolding}
To unfold this integral, we rely on the following lemma.
\begin{lemma}\label{d=1}Every rank one element of $W$ has a $\GSp_6(\Q)$ translate with $d =1$. \end{lemma}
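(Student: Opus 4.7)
The plan is to reduce to the model rank one vector $f = (0,0,0,1)$ by case analysis on the scalar coordinates of $v = (a,b,c,d)$, using only three families of explicit $\GSp_6(\Q)$-elements: the scalar matrices $\lambda I_6$, the Weyl element $J_6 = \mm{}{1_3}{-1_3}{}$, and the unipotents $\mm{1}{X}{}{1}$ for rational symmetric $3 \times 3$ matrices $X$, which by the preceding lemma act on $W$ as the operators $n(X)$.

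First I observe that $\lambda I_6$ has similitude $\lambda^2$ and scales $\bigwedge^3 W_6$ by $\lambda^3$, so the twist by $\nu^{-1}$ in the embedding $W \hookrightarrow (\bigwedge^3 W_6 \otimes \nu^{-1}) \otimes B$ makes it act on $W$ as multiplication by $\lambda$. Hence if $d \neq 0$, scaling by $d^{-1} I_6$ puts the last coordinate at $1$; and if $d = 0$ but $a \neq 0$, applying $J_6$ yields $(0, c, -b, a)$ and we are back in the previous case after one more scaling.

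The essential case is therefore $v = (0, b, c, 0)$ with $(b,c) \neq 0$. After a further application of $J_6$, which interchanges $b$ and $c$ up to sign, I may assume $c \neq 0$. By Proposition \ref{rk1Sharp}, the rank one hypothesis forces $c^\# = 0$. Applying $n(X)$ to $(0, b, c, 0)$ produces $(0,\, b,\, c + b \times X,\, \tr(c, X) + \tr(b, X^\#))$, and the choice $X = E_{ii}$ (for which $X^\# = 0$) makes the last coordinate equal to $c_{ii}$. So it remains to show that some diagonal entry of $c$ is nonzero, which will produce a translate with nonzero $d$-coordinate and reduce to the first case.

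This last claim is the main obstacle and the unique point at which the hypothesis that $B$ is ramified at infinity enters. Suppose by contradiction that all $c_{ii}$ vanish; writing the off-diagonal entries of $c$ as $a_1, a_2, a_3 \in B$, formula \eqref{hSharp} gives $(c^\#)_{ii} = -n(a_i)$ for each $i$, and $c^\# = 0$ then forces $n(a_i) = 0$. Because $B$ is definite, its reduced norm is anisotropic over $\Q$, so $a_1 = a_2 = a_3 = 0$, contradicting $c \neq 0$. This finishes the reduction to $d = 1$.
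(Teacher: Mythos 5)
Your proof is correct and follows essentially the same route as the paper's: reduce to $v=(0,b,c,0)$, note $c^{\#}=0$ via Proposition \ref{rk1Sharp}, and use the operators $n(X)$ together with the anisotropy of the norm form on $B$ to produce a translate with nonzero $d$-coordinate. The only (harmless) divergence is in the last step: the paper shows $c$ is not purely imaginary and then chooses $\lambda$ so that $\lambda\tr(c,X)+\lambda^{2}\tr(b,X^{\#})\neq 0$, whereas you prove the sharper statement that some diagonal entry $c_{ii}$ is nonzero and take $X=E_{ii}$ (for which $X^{\#}=0$), which avoids the quadratic term altogether.
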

\begin{proof}  Suppose our rank one element $v$ is $(a,b,c,d)$.  If either $a$ or $d$ is nonzero, we are done.  So, suppose $v = (0,b,c,0)$.  Then $(0,b,c,0)n(X) = (0,b,c+b \times X,\tr(c,X) + \tr(b,X^\sharp))$.  Since $v$ is rank one, by Proposition \ref{rk1Sharp}, $b^\sharp = c^\sharp = 0$.  From the formula (\ref{hSharp}) for $\sharp$, since $B$ is not split, we cannot have both $c$ purely imaginary and $c^\sharp = 0$, unless $c=0$.  The same for $b$.  Without loss of generality, assume $c \neq 0$.  Since $c$ is not purely imaginary, there exists $X \in H_3(\Q)$ with $\tr(c,X) \neq 0$.  Hence $\lambda \tr(c,X) + \lambda^2\tr(b,X^\sharp)$ is not zero for some $\lambda$ in $\Q$, proving the lemma. \end{proof}

To do the unfolding, we need to understand the double coset space $P(\Q) \backslash \GG(\Q) \slash \GSp_6(\Q)$, or equivalently, the $\GSp_6(\Q)$ orbits on the rank one lines.  Recall that $T$ denotes a half-integral symmetric $3 \times 3$ matrix that corresponds to the order $B_0$, and set $A(T)$ to be the rank one matrix in $H_3(B)$ from Lemma \ref{AITlemma}.  Define $f_\O = (0,0,A(T),0)$ in $W$.  By Proposition \ref{rk1Sharp}, $f_\O$ is rank one.  If $v$ is a rank one element of $W(\Q)$, an orbit $\Q v \GSp_6(\Q)$ is said to be \emph{negligible} if the stabilizer of $\Q v$ in $\GSp_6(\Q)$ contains the unipotent radical of a parabolic subgroup of $\GSp_6$.
\begin{proposition} \label{orbit calc} The line $\Q f_\O$ represents the unique open orbit in $P(\Q) \backslash \GG(\Q) \slash \GSp_6(\Q)$.  There are an infinite number of other orbits, but they are all negligible. The stabilizer of the line $\Q f_\O$ is the set of elements of $\GSp_6$ of the form $g = \mm{\lambda_1}{}{}{\lambda_2} \mm{1}{U}{}{1}$ with $\lambda_1, \lambda_2$ in $\GL_1(\Q)$ and $\tr(TU) = 0$.\end{proposition}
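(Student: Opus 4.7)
First, I would verify that $f_\O = (0, 0, A(T), 0)$ is rank one: by Proposition \ref{rk1Sharp} applied with $a = b = d = 0$, all four conditions collapse to $A(T)^\# = 0$, which is given by the preceding lemma. The main computation is then the stabilizer of $\Q f_\O$ in $\GSp_6(\Q)$, which I would carry out via the Bruhat decomposition $\GSp_6 = P_s \sqcup P_s J_6 P_s$ with $P_s = \{\mm{m}{X}{}{\nu m^{-T}}\}$ the Siegel parabolic.

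For $g = \mm{m}{}{}{\nu m^{-T}} \mm{1}{X}{}{1} \in P_s$, the unipotent factor acts as $n(X) \in \GG$, sending $f_\O$ to $(0, 0, A(T), \tr(A(T), X))$; direct expansion gives $\tr(A(T), X) = \tr(TX)$ (the quaternion pairing against a rational entry $X_{ij}$ reduces to twice the scalar part of $(A(T))_{ij}$ times $X_{ij}$). The Levi then rescales the $c$-slot by $(\nu/\det m)\, m^T (\cdot) m$, so the line condition forces $\tr(TX) = 0$ and $m^T A(T) m \in \Q^\times A(T)$. Decomposing $A(T) = T + A^-$ into scalar and imaginary parts (the off-diagonal entries $\iIm(v_i) \in B_0^-$ of $A^-$ being linearly independent, since $\{1, v_1, v_2, v_3\}$ is a basis of $B_0$), the imaginary-part equation $m^T A^- m = \lambda A^-$ splits into the single statement that $y \mapsto m^T y m$ acts as the scalar $\lambda$ on the space of skew-symmetric $3 \times 3$ rational matrices, i.e., $\wedge^2 m = \lambda I$. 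Routine linear algebra then forces $m = \lambda_1 I$ for a scalar $\lambda_1$, after which $m^T T m = \lambda T$ is automatic with $\lambda = \lambda_1^2$; setting $\lambda_2 = \nu/\lambda_1$ gives the stated Levi contribution. For $g = p_1 J_6 p_2$ in the big cell, the action of $J_6$ turns the $c$-slot of $f_\O p_1$ into a $b$-slot that no subsequent $p_2$-action can restore without producing nonzero $a, d$ components. A dimension count ($\dim\mathrm{Stab} = 2 + 5 = 7$, $\dim \GSp_6 = 22$, giving orbit dimension $15 = \dim \GG/P$) then shows the orbit of $\Q f_\O$ is open, and uniqueness of the open orbit follows from the irreducibility of $\GG/P$.

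For the remaining orbits, I would apply Lemma \ref{d=1} to put each rank one line into the form $v = (N(x), x^\#, x, 1)$, and then apply $\mm{1}{}{-x^+}{1} \in \GSp_6$ to reduce to $x = x^- \in H_3(B)$ purely imaginary. The $\GSp_6$-orbit of $\Q v$ is then determined by the orbit of $x^-$ under $x^- \mapsto \nu^{-1} m^T x^- m$ of $\GL_3(\Q) \times \GL_1(\Q)$. Viewing $x^- \in H_3(B)^- \cong \wedge^2 \Q^3 \otimes B_0^-$ as a $\Q$-linear map $\Q^3 \to B_0^-$, non-open orbits correspond to $x^-$ of rank $r < 3$, parameterized by image subspaces in $B_0^-$---a $\Q$-Grassmannian that accounts for the infinitely many claimed orbits. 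For each such $x^-$, the $\GL_3(\Q)$-stabilizer contains the unipotent radical of the $\GL_3$-parabolic fixing $\ker(x^-) \subset \Q^3$, which lifts to the unipotent radical of a parabolic of $\GSp_6$ inside $P_s$, giving negligibility.

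The main technical step in the above is the identification $m^T A^- m = \lambda A^- \Leftrightarrow \wedge^2 m = \lambda I$, which depends crucially on the linear independence of the $\iIm(v_i)$ in $B_0^-$; the main potential obstacle is the case-by-case matching of $\GL_3$-parabolics to $\GSp_6$-parabolics in the negligibility step.
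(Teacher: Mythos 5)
Your stabilizer computation and the dimension count for openness are essentially correct and match the paper's conclusion. But the enumeration of the remaining orbits contains a genuine error. After normalizing to $v=(N(x),x^\#,x,1)$ with $x$ purely imaginary, you assert that the $\GSp_6(\Q)$-orbit of $\Q v$ is determined by the $\GL_3(\Q)\times\GL_1(\Q)$-orbit of $x^-$, and hence that the rank-$2$ stratum yields further (negligible) orbits. This is false: the rank of $x^-$ is an invariant only of the $M_{P,6}$-orbit, not of the $\GSp_6$-orbit, because elements of $\GSp_6(\Q)$ outside the Siegel parabolic can move one standard form to another. In fact the paper shows that the rank-$2$ stratum lies \emph{inside} the open orbit: writing $c=h_1\otimes f_1+h_2\otimes f_2$, applying the Weyl element $e_3\mapsto f_3$, $f_3\mapsto -e_3$ produces an element whose imaginary part involves $h_1,h_2$ and $h_3=\iIm(h_1h_2)$, and since $B$ is a division algebra $h_1h_2\notin\langle 1,h_1,h_2\rangle$, so one lands back in the rank-$3$ case. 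This point is not cosmetic: if the rank-$2$ elements were discarded as negligible, the unfolding in Theorem \ref{thm:unfold} would be wrong.

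Relatedly, your negligibility argument for the lower strata does not work as stated. The unipotent radical of a parabolic of $\GL_3$, embedded in the Levi $M_{P,6}$ of the Siegel parabolic, is \emph{not} the unipotent radical of any parabolic of $\GSp_6$; the latter always contains genuinely "symplectic" unipotent elements (for instance, any parabolic contained in $P_s$ has unipotent radical containing $U_{P_s}$, which does not stabilize these lines since $n(X)$ shifts the $d$-slot by $\tr(c,X)$). For the $\dim=1$ stratum the paper instead verifies directly that the line is stabilized by $U_{Q(f_1)}$, the unipotent radical of the Klingen-type parabolic stabilizing $\Q f_1\subseteq W_6$, which is not contained in $P_s$. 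Finally, uniqueness of the open orbit does not follow from irreducibility of $\GG/P$: the $\Q$-points of the open geometric orbit could a priori split into several $\GSp_6(\Q)$-orbits, so one needs the explicit transitivity computation (every $\dim=3$, and hence every $\dim=2$, element can be moved to $f_\O$ over $\Q$) that the paper carries out.
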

\begin{remark} This is another spot where defining the group $\GG$ in terms of the Freudenthal construction gives one a serious technical advantage.  Here, since $P(\Q) \backslash \GG(\Q)$ parametrizes the rank one lines in $W(\Q)$, the use of $W$ allows one to see the action of $\GSp_6(\Q)$ on the coset space $P(\Q) \backslash \GG(\Q)$ as a \emph{linear} action.  This linearization makes the double coset computation simple. \end{remark}
Before giving the proof, define $J_6 = e_1 \wedge f_1 + e_2 \wedge f_2 + e_3 \wedge f_3$. Note that if $a_1, a_2, a_3$ in $B$ are purely imaginary, and $c_1, c_2, c_3$ are in $\Q$, then 
\begin{equation}\label{c matrix}(a_1f_1 + a_2 f_2 + a_3f_3) \wedge J_6 + c_1e_1 \wedge f_2 \wedge f_3 + c_2f_1 \wedge e_2 \wedge f_3 + c_3f_1 \wedge f_2 \wedge e_3\end{equation}
is the element
\[(0,0,\left(\begin{array}{ccc} c_1 & a_3 & -a_2 \\ -a_3 & c_2 & a_1 \\ a_2 & -a_1 & c_3 \end{array}\right),0)\]
of $W$.
\begin{proof}[Proof of Proposition \ref{orbit calc}.] Suppose we are given a rank one element $v = (a,b,c,d)$ of $W(\Q)$.  We will first use the action of $\GSp_6(\Q)$ to put it into a standard form.  By Lemma \ref{d=1}, we may assume $d = 1$.  Then our element is of the form $(N(c),c^\#,c,1)$.  Now use the transformations $\overline{n}(Y)$ to make $c$ purely imaginary.

Denote by $M_{P,6}$ the Levi of the Siegel parabolic on $\GSp_6$, i.e., the elements of $\GSp_6$ of the form $\mm{m}{}{}{n}$.  By formula (\ref{c matrix}), $M_{P,6}$ acts on the space of such $c$'s as $B^{\tr=0} \otimes_\Q span\{ f_1, f_2, f_3 \}$.  That is, $M_{P,6}$ acts on these $c$'s via its usual action on $span\{f_1, f_2, f_3\} \subseteq W_6$, linearly extended to $B^{\tr=0} \otimes_\Q span\{ f_1, f_2, f_3 \}$.  Pick a basis $i, j, k$ of $B^{\tr = 0}$, and suppose $c = i \otimes v_1 + j \otimes v_2 + k \otimes v_3$. We have four different cases, depending on whether $dim\, span \{ v_1, v_2, v_3 \}$ is 0,1,2 or 3.  (Note that this dimension is not a $\GSp_6$ invariant of the orbit, just an $M_{P,6}$ invariant.) If the dimension is 0, i.e, $c = 0$, then we are in the orbit of $f$, which is negligible.  There are an infinite number of orbits with $dim\, span \{ v_1, v_2, v_3 \} = 1$. Given such an orbit, we can use $M_{P,6}$ to move $c$ to $h \otimes f_1$ with $h \in B^{\tr=0}$.  Then this orbit is stabilized by $U_Q(f_1)$, the unipotent radical of the parabolic subgroup of $\GSp_6$ stabilizing the line $\Q f_1 \subseteq W_6$.  Thus, this orbit is also negligible. We will show that there is a unique orbit with $dim=3$, and that this orbit is represented by $f_\mathcal{O}$.  Furthermore, we will see that the orbit corresponding to $dim = 2$ is the same as the $dim = 3$ orbit.

First, for $dim = 3$, use $M_{P,6}$ to move $c$ to $i \otimes f_1 + j\otimes f_2 + k \otimes f_3$, proving that there is one such orbit.  

Now we consider a $dim = 2$ orbit.  Here we have $c = i \otimes v_1 + j \otimes v_2 + k \otimes v_3$ with one of the $v_i$ in the span of the other two.  Thus we may use $M_{P,6}$ to move this orbit to one with $c = h_1 \otimes f_1 + h_2 \otimes f_2$, with $h_1, h_2$ in $B^{\tr=0}$.  If $dim\, span \{ h_1, h_2 \} = 1$, then we are really in the $dim =1$ case.  Thus we have that $h_1, h_2$ are linearly independent.

Now 
\[c = \left(\begin{array}{ccc} 0 &0  & -h_2 \\ 0 & 0 & h_1 \\ h_2 & -h_1 & 0 \end{array}\right),\]
from which we obtain 
\[c^\sharp = \left(\begin{array}{ccc} -n(h_1) & h_2h_1 & 0 \\ h_1h_2 & -n(h_2) & 0 \\ 0 & 0 & 0 \end{array}\right)\]
and $N(c) = 0$.  Hence we have that $v = (N(c), c^\#, c, 1)$ is
\[-n(h_1)e_1^* \wedge f_1 - n(h_2) e_2^* \wedge f_2 + h_1h_2 e_2^* \wedge f_1 + h_2h_1 e_1^* \wedge f_2 + (h_1f_1 + h_2 f_2) \wedge J_6 + f_1\wedge f_2 \wedge f_3.\]
Now apply to $v$ the element of $\GSp_6$ that takes $e_3 \mapsto f_3$, $f_3 \mapsto -e_3$ and is the identity on $e_1,e_2,f_1,f_2$.  One obtains the element $v' = (0,0,h,0)$, with
\[h = \left(\begin{array}{ccc} -n(h_2) & -h_2h_1 & -h_2 \\ -h_1h_2 & -n(h_1) & h_1 \\ h_2 & -h_1 & -1 \end{array}\right).\]
Since $B$ is not split, $h_1 h_2$ is not in the span of $1, h_1, h_2$.  Take $h_3$ in $B^{\tr=0}$ with $h_1h_2 -h_3$ in $\Q$.  Then $\iIm(h) = (h_1 f_1 + h_2 f_2 + h_3 f_3) \wedge J_6$.  Since 
\[h_1 \otimes f_1 + h_2 \otimes f_2 + h_3 \otimes f_3 = i \otimes v_1 + j \otimes v_2 + k \otimes v_3\]
for some $v_1, v_2, v_3$ spanning $\langle f_1, f_2, f_3 \rangle$, the $dim =2$ orbit is the same as the $dim = 3$ orbit.

The element $f_\mathcal{O}$ clearly represents the $dim = 3$ orbit.  To compute the stabilizer, note that by (\ref{c matrix}) we immediately see that if $f_\mathcal{O}g = f_\mathcal{O}$, then $f_1g = f_1, f_2g = f_2, f_3g = f_3$.  Hence an element of the stabilizer must be of the form $g = \mm{\lambda}{}{}{1} \mm{1}{U}{}{1}$.  Applying this element to $f_\mathcal{O}$, we find it stabilizes $f_{\mathcal O}$ if and only if $\tr(TU) = 0$, completing the proof.
\end{proof}

Recall the partially normalized Eisenstein series $E^\Phi(g,s)$ from above, and set
\[I^\Phi(\phi,s) = \int_{\GSp_6(\Q) Z(\A) \backslash \GSp_6(\A)}{\phi(g)E^\Phi(g,s)\,dg}.\]
We will consider the integral $I^\Phi(\phi,s)$ for now, and add back in the other normalizing factors later. 

Write $U_P$ for the unipotent radical of the Siegel parabolic of $\GSp_6$.  The group $U_P$ consists of the matrices $\mm{1}{u}{}{1}$ with $u = \,^tu$.  Define $U_0 \subseteq U_P$ to be the elements with $\tr(Tu) = 0$.  Write $\psi: \Q \backslash \A \rightarrow \C^\times$ for the usual additive character of conductor 1 satisfying $\psi_\infty(x) = e^{2\pi i x}$ for $x \in \R$.  Define $\chi: U_P(\Q) \backslash U_P(\A) \rightarrow \C^{\times}$ via $\chi(\mm{1}{h}{}{1}) = \psi(\tr(Th)),$ and set 
\begin{equation}\label{phichi1}\phi_\chi(g) = \int_{U_P(\Q) \backslash U_P(\A)}{\chi^{-1}(u)\phi(ug) \, du}.\end{equation}
\begin{theorem}\label{thm:unfold} The global integrals $I^\Phi(\phi,s), I_{2r}^*(\phi,s)$ converge absolutely for all $s$ for which the Eisenstein series $E^\Phi(g,s)$, respectively, $E_{2r}^*(g,s)$ is finite, and thus define a meromorphic function of $s$.  For $Re(s)$ sufficiently large, the integral $I^\Phi(\phi,s)$ unfolds as
\begin{align*} I^\Phi(\phi,s) &= \int_{U_0(\A)\backslash \GSp_6(\A)}{\phi_\chi(g) |\nu(g)|^s \Phi(f_\O g)\,dg}\\ &= \int_{U_P(\A)\backslash \GSp_6(\A)}{\phi_\chi(g) |\nu(g)|^s \left(\int_{U_0(\A)\backslash U_P(\A)}{\chi(u)\Phi(f_\O u g)\,du}\right)\,dg}. \end{align*} \end{theorem}
\begin{proof} The Eisenstein series $E^\Phi(g,s)$ is a function of moderate growth wherever it is finite.  Since the cusp form $\phi$ decreases rapidly on Siegel sets, the integral $I^\Phi(\phi,s)$ converges absolutely for all $s$ for which the Eisenstein series is finite.

Suppose now $Re(s) >> 0$, and denote by $\mathcal{F}$ a fundamental domain for $\GSp_6(\Q)Z(\A)\backslash \GSp_6(\A)$.  Similar to the above, but easier, $\sum_{\gamma \in P(\Q)\backslash \GG(\Q)}{|f^\Phi(\gamma g,s)|}$ converges to a function of moderate growth since $Re(s) >>0$.  Then again since $\phi$ decreases rapidly on Siegel sets, the sum
\[\sum_{\gamma \in P(\Q)\backslash \GG(\Q)}{\int_{\mathcal{F}}{|f^\Phi(\gamma g,s)||\phi(g)|\,dg}}\]
is finite.  Hence even though there are an infinite number of $\GSp_6(\Q)$ orbits on $P(\Q)\backslash \GG(\Q)$, we may analyze them, and check the vanishing of the associated integrals, one-by-one.

By Proposition \ref{orbit calc}, we have
\begin{equation}\label{unfold1}I^\Phi(\phi,s) = \int_{\GL_1(\Q) U_0(\Q)Z(\A) \backslash \GSp_6(\A)}{\phi(g)f^\Phi(\gamma_0 g,s)\,dg}\end{equation}
where the $\GL_1(\Q)$ is the set of elements of $\GSp_6(\Q)$ of the form $\mm{\lambda}{}{}{1}$, with $\lambda \in \Q^\times$, and $\gamma_0 \in \GG(\Q)$ satisfies $(0,0,0,1)\gamma_0 = f_\mathcal{O}$.  Indeed, this is the term from the open orbit, and the negligible orbits contribute zero by the cuspidality of $\phi$.

Now we integrate $\phi$ over $U_0(\Q)\backslash U_0(\A)$ and Fourier expand:
\[\int_{U_0(\Q)\backslash U_0(\A)}{\phi(ug)\,du} = \sum_{\gamma \in \GL_1(\Q)}{\phi_\chi(\gamma g)}.\]
Placing this Fourier expansion into (\ref{unfold1}) we get
\begin{equation}\label{unfold2}I^\Phi(\phi,s) = \int_{U_0(\A)Z(\A)\backslash \GSp_6(\A)}{\phi_\chi(g)f^\Phi(\gamma_0 g,s)\,dg}.\end{equation}
Integrating
\[\int_{U_0(\A)\backslash \GSp_6(\A)}{|\nu(g)|^s \phi_\chi(g)\Phi(f_\mathcal{O} g)\,dg}\]
over $Z(\A)$ one obtains (\ref{unfold2}), giving the theorem. 
\end{proof}

\begin{remark} In Theorem \ref{thm:unfold}, we could have chosen to unfold the global integral $I^\Phi(\phi,s)$ to any Fourier coefficient $\phi_{\chi'}(g)$, where $\chi'\left(\mm{1}{h}{}{1}\right) = \psi(\tr(T'h))$ and the quaternion algebra associated to $T'$ is $B$.  Using $T'$ instead of $T$ would have no effect on the results or proofs in section \ref{Unfolding}.  However, using $T$ that corresponds to the maximal order $B_0$ did make a difference in Corollary \ref{dirSeriesAIT}, and will also make a difference in Proposition \ref{charInt} below.\end{remark}

\section{Evaluation of the integral}\label{unram}
In this section we relate the unfolded Rankin-Selberg integral to the Spin $L$-function.
\subsection{The finite places}
Our first task is to compute the unipotent integral $\int_{U_0\backslash U_P}{\chi(u)\Phi(f_\O u g)\,du}.$  For an element $m \in \GL_3$, and an element $\lambda \in \GL_1$, we set 
\[(\lambda, m) = \left(\begin{array}{c|c}\lambda \det(m)\,^{t}m^{-1} & \\ \hline & m \end{array}\right)\]
an element of $\GSp_6$.  For $(\lambda, m) \in \GL_1(\A_f) \times \GL_3(\A_f)$, we define
\[\Xi(\lambda,m) := \begin{cases} 1 &\mbox{if } \lambda \in \widehat{\Z}, m \in M_3(\widehat{\Z}), \text{ and } m^{-1}T c(m) \in M_3(\widehat{\Z}) \\ 0 &\text{otherwise} \end{cases}.\]

\begin{proposition}\label{charInt} The integral
\begin{equation}\label{eq:charInt}\int_{U_0(\A_f) \backslash U_P(\A_f)}{\chi(u)\Phi_f(f_\O u(\lambda,m))\,du} = |\lambda|_f \Xi(\lambda,m).\end{equation} \end{proposition}
\begin{proof} Recall the lattice $J_0$ in $H_3(B)$ defined in (\ref{J_0}), and set $J_0(\widehat{\Z}) = J_0 \otimes_\Z \widehat{\Z}$.  Suppose $h = (\lambda,m)$.  One computes
\begin{equation}\label{f_O action}f_\mathcal{O}n(u)h = \left(0,0,m^{-1}A(T)c(m),\frac{\tr(Tu)}{\lambda}\right).\end{equation}
Hence for the integral (\ref{eq:charInt}) to be nonzero, one needs $m^{-1}A(T)c(m)$ to be in $J_0(\widehat{\Z})$ and the integral
\begin{equation}\label{abs lambda int}\int_{U_0(\A_f) \backslash U_P(\A_f)}{\psi(\tr(Tu)) \charf\left(\frac{\tr(Tu)}{\lambda} \in \widehat{\Z}\right)\,du}\end{equation}
to be nonzero. By Lemma \ref{AITlemma}, $m^{-1}A(T)c(m)$ is in $J_0(\widehat{\Z})$ if and only if $m \in M_3(\widehat{\Z})$ and $m^{-1}Tc(m) \in M_3(\widehat{\Z})$.  The integral (\ref{abs lambda int}) is zero when $\lambda \notin \widehat{\Z}$, and is $|\lambda|_f$ when $\lambda \in \widehat{\Z}$. The proposition follows.
\end{proof}

The global integral we must compute is
\begin{align*}I^\Phi(\phi,s) &= \int_{U_0(\R)\backslash \GSp_6(\R)}{|\nu(g_\infty)|_\infty^s \Phi_\infty(f_\O g_\infty)} \\ & \times \left(\int_{U_0(\A_f)\backslash \GSp_6(\A_f)}{\phi_\chi(g_f g_\infty) |\nu(g_f)|_f^s \Phi_f(f_\O g_f)\,dg_f}\right)\,dg_\infty \\& = \int_{U_0(\R)\backslash \GSp_6(\R)}{|\nu(g_\infty)|_\infty^s \Phi_\infty(f_\O g_\infty) I_f(\phi,s,g_\infty) \, dg_\infty}\end{align*}
where
\begin{align}\label{fteInt} I_f(\phi,s,g_\infty) &= \int_{U_0(\A_f)\backslash \GSp_6(\A_f)}{\phi_\chi(g_f g_\infty) |\nu(g_f)|_f^s \Phi_f(f_\O g_f)\,dg_f} \nonumber \\ &= \int_{\left(\GL_1 \times \GL_3\right)(\A_f)}{\delta_P^{-1}(h)|\det(m)|_f^{-1}\phi_\chi(\pi(h)g_\infty) |\nu(h)|_f^{s+1} \Xi(\lambda,m)\,dh}. \end{align}
Here $h = (\lambda, m)$, $\delta_P$ denotes the modulus character of the Siegel parabolic in $\GSp_6$, and we have applied Proposition \ref{charInt}, the Iwasawa decomposition, and the fact that $\lambda = \nu(h) \det(m)^{-1}$.

Translating between the classical and adelic language, the Dirichlet series of Corollary \ref{dirSeriesAIT} exactly evaluates the integral (\ref{fteInt}), as we check in the following proposition.
\begin{proposition}\label{unramCalc} Assume $\phi$ is as in Definition \ref{holAssumption}.  Then
\[I_f(\phi,s,g_\infty)=\frac{L(\pi,Spin,s-2)}{\zeta(2s-4)\zeta_{D_B}(2s-2)}\phi_\chi(g_\infty).\]
\end{proposition}
\begin{proof} If $\nu(g_\infty) > 0$, one has 
\begin{equation}\label{phichi} \phi_\chi(g_\infty) = a_f(T)\nu(g_\infty)^{-r}j(g_\infty,i)^{-2r}e^{2\pi i \tr(T g_\infty\cdot i)}, \end{equation}
while $\phi_\chi(g_\infty) = 0$ if $\nu(g_\infty) < 0$.  Suppose $h = (\lambda, m) \in M_{P,6}(\A_f)$.  Then $h = h_{\Q} h_\infty^{-1} k$ for some $h \in M_{P,6}(\Q)$ and $k \in M_{P,6}(\Z_p)$, where $h_\infty = \left(h_{\Q}\right)_\infty$ is the component of $h_\Q$ at the archimedean place.  Hence $\phi_\chi(h g_\infty) = \phi_\chi(h_\Q h_\infty^{-1} g_\infty) = \phi_{\chi \cdot h_\Q}(h_\infty^{-1} g_\infty)$, where $\chi \cdot h_\Q$ is the Fourier coefficient (\ref{phichi1}) except with $T$ replaced by $\lambda m^{-1} T c(m)$.  Thus,
\begin{align*} \phi_\chi(h g_\infty) &= a_f(\lambda m^{-1} T c(m)) \nu(h_\infty^{-1} g_\infty)^{-r} j(h_\infty^{-1} g_\infty, i)^{-2r} e^{2 \pi i \tr\left( (\lambda m^{-1} T c(m)) (h_\infty^{-1} g_\infty) \cdot i \right)} \\ &= a_f(\lambda m^{-1} T c(m)) \lambda^{-3r} \det(m)^{-r} \left( \nu(g_\infty)^{-r} j(g_\infty,i)^{-2r} e^{2 \pi i \tr(T g_\infty \cdot i)}\right). \end{align*}
Hence if $h \in (h_\Q)_f M_{P,6}(\Z_p)$, with $h_\Q = (\lambda, m)$, then the integrand in (\ref{fteInt})
\begin{align*} & =\left(\lambda^6 \det(m)^2\right)\det(m) \left(a_f(\lambda m^{-1} T c(m)) \lambda^{-3r} \det(m)^{-r} \right) (\lambda \det(m))^{-s-1} \Xi(\lambda,m) \\ & \qquad \times \left( \nu(g_\infty)^{-r} j(g_\infty,i)^{-2r} e^{2 \pi i \tr(T g_\infty \cdot i)}\right) \\ & = \lambda^{-(s+3r-5)} \det(m)^{-(s+r-2)}\Xi(\lambda,m)a_f(\lambda m^{-1}Tc(m)) \times \left( \nu(g_\infty)^{-r} j(g_\infty,i)^{-2r} e^{2 \pi i \tr(T g_\infty \cdot i)}\right) .\end{align*}
Applying Corollary \ref{dirSeriesAIT} and (\ref{phichi}) again gives the proposition.
\end{proof}

\subsection{The infinite place}\label{arch}
Suppose $\phi$ is a cusp form corresponding to a level one, holomorphic Siegel modular form of weight $2r$, as in Definition \ref{holAssumption}.  We now compute the normalized archimedean integral
\begin{equation*}I^*_{\infty,2r}(\phi,s) = \pi^{-(s+r)}D_B^s \Gamma_\R(2s+2r-4)\Gamma_\R(2s+2r-8) I^{\Phi}_{\infty,2r}(\phi,s),\end{equation*}
where
\begin{equation}\label{IPhi}I_{\infty,2r}^\Phi(\phi,s) = \int_{U_0(\R)\backslash \GSp_6(\R)}{\phi_\chi(g) |\nu(g)|^s \Phi_{\infty,2r}(f_{\mathcal{O}} g)\,dg}.\end{equation}
We prove
\begin{theorem}\label{archCalc} Suppose $\phi$ is as in Definition \ref{holAssumption}, and that $\phi$ corresponds to the level one Siegel modular form $f$.  Then $I^*_{\infty,2r}(\phi,s)$ equals 
\[a_f(T)\Gamma_\C(s+r-4)\Gamma_\C(s+r-3)\Gamma_\C(s+r-2)\Gamma_\C(s+3r-5)\]
up to a nonzero constant. \end{theorem}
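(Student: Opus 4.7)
The plan is to evaluate $I^\Phi_{\infty,2r}(\phi,s)$ in three steps: isolate $a_f(T)$ via the holomorphy of $f$, carry out the inner integral over $U_0(\R)\backslash U_P(\R)$ using formula (\ref{f_O action}), and then evaluate the remaining integral over the Levi of the Siegel parabolic as a matrix-argument Mellin transform.

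To start, I would use the Iwasawa decomposition $g_\infty = \mm{1}{X}{}{1}\, m_Y\, k$ on $\GSp_6^+(\R)$, chosen so that $g_\infty(i) = X + iY$ and $j(g_\infty,i) = \det(Y)^{-1/2}\, j(k,i)$. Combining the relation $\phi(g) = \nu(g)^{-r} j(g,i)^{-2r} f(g(i))$ with the Fourier expansion of $f$ yields
\[\phi_\chi\!\left(\mm{1}{X}{}{1}\, m_Y\, k\right) = a_f(T)\, \det(Y)^r\, j(k,i)^{-2r}\, e^{2\pi i \tr(TX)}\, e^{-2\pi \tr(TY)},\]
which extracts the promised factor of $a_f(T)$.

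Next, formula (\ref{f_O action}) gives $f_\O u M = (0,0,\, m^{-1}A(T)c(m),\, \tr(Tu)/\lambda)$ for $M = (\lambda,m)$. Since $U_P(\R)/U_0(\R) \cong \R$ via $u \mapsto \tr(Tu)$, the inner integral $\int_{U_0(\R)\backslash U_P(\R)} \chi(u)\Phi_{\infty,2r}(f_\O u M)\,du$ is a one-variable Fourier transform of $\Phi_{\infty,2r}$ along its $d$-coordinate, and because $\Phi_{\infty,2r}$ is a Gaussian times a polynomial of degree $2r$ it evaluates explicitly. After pairing the $K_\infty$-equivariance of $\phi_\chi$ and $\Phi_{\infty,2r}$ to collapse the $k$-integration up to a nonzero constant, the outer integral reduces to one over $(\lambda, Y) \in \R^+ \times H_3^+(\R)$. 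The $\lambda$-integration is an elementary scalar Mellin and produces the factor $\Gamma_\C(s+3r-5)$; the $3r$ reflects the combined contributions of $\det(Y)^r$ from $\phi_\chi$ and a $\det^{2r}$-type Levi weight carried by the polynomial part of $\Phi_{\infty,2r}$. The $Y$-integration over the symmetric cone $H_3^+(\R)$ is a matrix-argument Mellin of Gindikin-Koecher type; combined with the normalizing factors $\pi^{-(s+r)}D_B^s\, \Gamma_\R(2s+2r-4)\,\Gamma_\R(2s+2r-8)$ and using the Legendre duplication identity $\Gamma_\R(s)\Gamma_\R(s+1) = \Gamma_\C(s)$ along with the polynomial-dependent shifts contributed by $\Phi_{\infty,2r}$, the three gamma factors $\Gamma_\C(s+r-4)\,\Gamma_\C(s+r-3)\,\Gamma_\C(s+r-2)$ emerge, with the half-integer shifts of the bare matrix gamma converted to integer shifts by duplication. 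The residual factor $\det(T)^{-\beta}$ from the matrix Mellin is absorbed into the nonzero constant.

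The main obstacle is the polynomial bookkeeping for $\Phi_{\infty,2r}$: its degree-$2r$ polynomial part must be engineered so that (i) it has the correct $K_\infty$-type to match the weight of $\phi_\chi$, (ii) the inner Fourier integral in the $d$-direction produces the needed Gaussian $e^{-c\,\tr(TY)}$ together with clean powers of $\det(Y)$ and $\lambda$, and (iii) the matrix Mellin combined with the $\Gamma_\R$-normalization assembles into exactly $\Gamma_\C(s+r-4)\,\Gamma_\C(s+r-3)\,\Gamma_\C(s+r-2)$ via Legendre duplication, rather than the half-integer shifts of a naive Gindikin-Koecher application. Because the shift $s+3r-5$ in the last factor is a combined Levi-weight computation, any miscount in weights propagates into an incorrect final answer, so the precise construction of the polynomial factor of $\Phi_{\infty,2r}$ is the combinatorial heart of the proof.
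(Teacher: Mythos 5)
Your overall skeleton --- extract $a_f(T)$ from the holomorphy of $f$, reduce to a zeta integral over the Siegel parabolic of $\GSp_6$, evaluate the $Y$-integral as a Siegel (Gindikin--Koecher) integral, and convert $\Gamma_\R$'s to $\Gamma_\C$'s by duplication --- is indeed the paper's. The ``polynomial bookkeeping'' you flag as the heart of the matter is in fact the easy part: the choice $\Phi_{\infty,2r}(v)=e^{-||v||^2}\langle r(i),v\rangle^{2r}$, together with the identities $||v||^2=|\langle r(i),v\rangle|^2$ for rank-one $v$ and $\nu(g)^{-1}j(g,i)^{-1}\langle r(i),f_\O g\rangle=\tr\left(T(g\cdot i)\right)$, converts the entire integrand into an explicit function of $Z=g\cdot i$, and no further engineering is needed.

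The genuine gap is that the successive integrations do not decouple the way you claim. After the center/$\GL_1$ integration one is left with
\[
J_T(s)=\int N(TY)^{s+r-2}e^{-2\pi\tr(TY)}\left\{\int_\R\frac{(x+i\tr(TY))^{2r}}{|x+i\tr(TY)|^{2s+2r}}\,e^{2\pi ix}\,dx\right\}d^*Y,
\]
and the inner $x$-integral (your $d$-direction integral) is a confluent-hypergeometric/Bessel-type integral, not an elementary Gaussian Fourier transform, because the Gaussian has already been Mellin-transformed away. If you instead perform the $d$-direction Fourier transform first, as you propose, the Gaussian produces a factor of the shape $e^{-\pi^2/c}$ with $c$ depending on $\lambda$ and $Y$, and the subsequent $\lambda$-integration is then a $K$-Bessel integral, not ``an elementary scalar Mellin'' yielding $\Gamma_\C(s+3r-5)$; likewise the $Y$-integral is not a clean Gindikin integral because the output of the $x$-integration depends on $\tr(TY)$. (Note also that the exponential $e^{-2\pi\tr(TY)}$ comes from the Fourier coefficient of the holomorphic cusp form, not from the Fourier transform of $\Phi_{\infty,2r}$.) The missing idea is Garrett's interchange trick: insert an auxiliary integral representing $\Gamma(2s-1)$ via $\int_{t>0}e^{-2\pi t\tr(TY)}t^{2s-1}\,dt/t$, interchange the order of integration so that the Siegel integral is evaluated with the \emph{complex} argument $(1+t-ix)$ --- producing $\Gamma_\R(2s+2r-4)\Gamma_\R(2s+2r-5)\Gamma_\R(2s+2r-6)$ --- and then evaluate the residual two-variable integral
\[
K(s,r)=\int_\R\int_{t>0}\frac{(x+i)^{2r}}{|x+i|^{2s+2r}}(1+t-ix)^{-3(s+r-2)}\,t^{2s-1}\,\frac{dt}{t}\,dx
\]
as in Garrett's triple-product computation. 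It is this last integral, not any single Mellin transform, that produces $\Gamma(s+3r-5)$ while cancelling the auxiliary $\Gamma(2s-1)$ and the extraneous $\Gamma(2s+2r-5)$. Without this interchange the computation stalls at Bessel functions, so the claimed sequential evaluations do not go through.
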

Our first task is to define the Schwartz function $\Phi_{\infty,2r}$ on $W(\R)$.  To do this, we digress slightly and discuss the Hermitian symmetric space $\mathcal{H}$ associated to $\GG(\R)$.
\subsubsection{The Hermitian symmetric space for $\GG(\R)$}
We assume $B_\R =B \otimes \R$ is the Hamilton quaternions.  Set $B_\C = B_\R \otimes_\R \C$, with the involution on $B_\C$ the one from $B_\R$, extended $\C$-linearly to $B_\C$.  Associated to $B_\C$, we have the space $H_3(B_\C)$ of three-by-three Hermitian matrices over $B_\C$.  Elements of $H_3(B_\C)$ are formal expressions $Z = X + iY$, with $X, Y$ in $H_3(B_\R)$.  Note that the $i$ in this expression is not the $i$ in $B_\R$; this $i$ is in the center of $B_\C$.  The Hermitian symmetric space $\mathcal{H}$ is the set of $Z = X + iY$ in $H_3(B_\C)$ with $Y$ positive definite.

Denote by $\GG^+(\R)$ the subgroup of $\GG(\R)$ that is the connected component of the identity.  We explain the action of $\GG^+(\R)$ on $\mathcal{H}$.  First, note that we have an inclusion $\GG(\R) \rightarrow \GG(\C)$, and $\GG(\C)$ acts on $W(\C) = W(\R) \otimes_\R \C$.  Set $e = (1,0,0,0)$, a rank one element of $W$. Identify $Z \in \mathcal{H}$ with the rank one element 
\[r(Z) = e \, n(-Z) = (1,-Z,Z^\sharp,-N(Z)).\]
Consider the projective space $\mathbf{P}(W(\C)^{rk=1})$ of rank one $\C$-lines in $W(\C)$.  Then $\GG(\R)$ acts on this space on the left via $g \cdot v = vg^{-1}$.  We will see momentarily that $\GG^+(\R)$ preserves the image of $\mathcal H$ inside $\mathbf{P}(W(\C)^{rk=1})$, and acts on it transitively.  In fact, the factor of automorphy 
\[j_{\GG(\R)}(g,Z): \GG^+(\R) \times \mathcal{H} \rightarrow \C^\times\]
and the action of $\GG^+(\R)$ on $\mathcal{H}$ are simultaneously defined by the equality
\[r(Z)g^{-1} = j_{\GG(\R)}(g,Z)r(gZ).\]
We summarize what we need in a proposition.  Recall the rank one element $f = (0,0,0,1)$.
\begin{proposition}\label{Herm action} Suppose $Z \in \mathcal H$, and $g \in \GG^+(\R)$.  Then the complex number $j_{\GG(\R)}(g,Z) = \langle r(Z)g^{-1}, f \rangle$ is nonzero, and thus
\[r(Z)g^{-1} = j_{\GG(\R)}(g,Z)r(gZ)\]
for some $gZ$ in $H_3(B_\C)$.  The element $gZ$ is in $\mathcal H$, and thus this equality defines an action of $\GG^+(\R)$ on $\mathcal{H}$.  Under the natural embedding $\mathcal{H}_3 \rightarrow \mathcal{H}$, the two actions of $\GSp_6^+(\R)$ are the same.  Furthermore, $j_{\GSp_6}(g,Z) = j_{\GG(\R)}(g,Z)$ when $g \in \GSp_6^+(\R)$ and $Z \in \mathcal{H}_3$. \end{proposition}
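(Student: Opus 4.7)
The plan is to first extract the algebraic content of the proposition from the rank-one structure of $W(\C)$, and then to establish the geometric content---namely, $j_{\GG(\R)}(g,Z) \neq 0$ and $gZ \in \mathcal{H}$---by direct verification on generators of $\GG^+(\R)$ together with a connectedness argument. From the definition of the symplectic pairing one reads off $\langle (a,b,c,d), f\rangle = a$, so $j_{\GG(\R)}(g,Z) = \langle r(Z)g^{-1}, f\rangle$ is simply the $a$-coordinate of $r(Z)g^{-1}$. Moreover, $r(Z) = (1,0,0,0)\cdot n(-Z)$ is rank one in $W(\C)$ by the $\GG$-invariance of rank applied to the evidently-rank-one element $(1,0,0,0) = -J_6\cdot f$; hence $r(Z)g^{-1}$ is likewise rank one. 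Assuming its $a$-coordinate $j_{\GG(\R)}(g,Z)$ is nonzero, the normalization $r(Z)g^{-1}/j_{\GG(\R)}(g,Z)$ is rank one with $a$-coordinate $1$, and Proposition \ref{rk1Sharp} (applied over $\C$) shows it must equal $r(W)$ for a unique $W \in H_3(B_\C)$; set $gZ := W$.

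For the geometric step, I would verify the proposition on a generating set of $\GG^+(\R)$: the translations $n(X)$ and $\bar{n}(Y)$ for $X,Y \in H_3(B_\R)$, the positive scalar similitudes, the Levi elements $m(t)$, and the involution $J_6$. Each action on $Z$ and the associated $j$ can be computed directly from the formulas in section \ref{sec:Freud}; for instance, $n(X)$ acts as $Z\mapsto Z+X$ with $j=1$, and $J_6$ acts as $Z\mapsto -Z^{-1} = -N(Z)^{-1}Z^\#$ with $j = -N(Z)$. For the $\GSp_6$-type generators, the fact that $gZ$ lies in $\mathcal{H}$ reduces to the classical preservation of the Siegel upper half-space. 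For $\bar{n}(Y)$ and for the Jordan-algebraic inverse $Z\mapsto Z^{-1}$, it follows from the standard positivity properties of the norm $N$ and the sharp operation on the self-dual cone in $H_3(B_\R)$. The subset $S \subseteq \GG^+(\R)$ of elements satisfying the conclusion of the proposition forms a submonoid by the cocycle relation for $j$, is open since $\mathcal{H}$ is open in the space of rank one lines and $j\neq 0$ is an open condition, and contains the generators just discussed; by connectedness of $\GG^+(\R)$ it equals the whole group.

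The compatibility with $\GSp_6$ is then almost automatic: under the embedding $\GSp_6 \to \GG$, the Siegel-parabolic elements and $J_6$ of $\GSp_6^+(\R)$ correspond to members of the generating set above, and direct computation shows $j_{\GG(\R)}(g,Z) = \nu(g)^{-2}\det(CZ+D) = j_{\GSp_6}(g,Z)$ on these generators; the cocycle relation extends this equality to all of $\GSp_6^+(\R)$. The main obstacle is the verification that $J_6$ and related operators preserve $\mathcal{H}$---that is, that the Jordan-algebraic inverse $-Z^{-1}$ has positive-definite imaginary part whenever $Z$ does. This is a nonlinear positivity statement about $H_3(B_\R)$ qua Jordan algebra, rather than a matrix calculation as in the classical Siegel case, and it ultimately rests on the identification of $\{Y \in H_3(B_\R) : Y > 0\}$ with the self-dual cone of the Jordan algebra.
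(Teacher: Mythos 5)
Your first paragraph is fine and matches the algebra implicit in the paper: $j_{\GG(\R)}(g,Z)$ is the $a$-coordinate of the rank-one element $r(Z)g^{-1}$, and once that coordinate is nonzero, Proposition \ref{rk1Sharp} forces the normalized element to be $r(W)$ for a unique $W\in H_3(B_\C)$. But your second step has two genuine gaps. First, the closing topological argument does not work as stated: an open submonoid of a connected group containing the identity need not be the whole group (in $(\R,+)$ the set $(-1,\infty)$ is an open submonoid containing $0$). What you actually need is that your list of elements generates $\GG^+(\R)$ as a group and is closed under inverses, so that the submonoid property alone finishes the job; that generation claim itself deserves a sentence (it follows because $\mathfrak{u}_P$ and $\overline{\mathfrak{u}_P}$ generate the derived Lie algebra, plus the similitude/scalar elements). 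Second, and more seriously, the entire analytic content of the proposition in your approach is concentrated in the claims that $N(Z)\neq 0$, that $-Z^{-1}\in\mathcal H$, and that $j(\bar n(Y),Z)=\langle r(Z)\bar n(-Y),f\rangle\neq 0$ with $\bar n(Y)\cdot Z\in\mathcal H$ --- these are exactly the statements you defer to ``standard positivity properties'' of the self-dual cone. They are true (this is the theory of tube domains over symmetric cones), but as written you have reduced the proposition to an unproved statement of comparable depth, so the proposal is not yet a proof.

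The paper takes a different route that sidesteps the cone theory entirely. It first observes that $P_0^+(\R)$ (translations $n(X)$, $X$ real, together with Levi elements acting on $\{Y>0\}$ by $Y\mapsto m^*Ym$) acts transitively on $\mathcal H$ with nonvanishing $j$; writing $Z=p\cdot i$ one then has $\langle r(Z)g^{-1},f\rangle = \nu(gp)^{-1}j(p,i)^{-1}\langle r(i),fgp\rangle$, and the nonvanishing for \emph{arbitrary} $g$ reduces to Lemma \ref{rk1 r(i)}: $|\langle r(i),v\rangle|^2=\|v\|^2>0$ for any nonzero real rank-one $v$. For the containment $gZ\in\mathcal H$, instead of proving that the Jordan inverse preserves the cone, the paper verifies the identity $N(\iIm(W))=\tfrac{1}{8i}\langle\overline{r(W)},r(W)\rangle$; since $g$ is real and scales the symplectic form by a positive similitude, this gives $N(\iIm(gZ))>0$ uniformly in $g$, and a continuity/connectedness argument in $g$ then pins $\iIm(gZ)$ to the positive-definite component of $\{N>0\}$. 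So the two hard points you left open are replaced by one elementary lemma and one algebraic identity. If you want to keep your generator-based structure, you must either import the Faraut--Kor\'anyi results you allude to with precise references, or substitute the paper's norm identity for your cone-positivity step. Your last paragraph on the $\GSp_6$-compatibility is fine and agrees with the paper (check on $n(X)$, $J_6$, and $M_{P,6}^+(\R)$, then use the cocycle relation).
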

\begin{proof} We sketch the proof.  First, a minimal parabolic subgroup $P_0$ of $\GG$ is discussed in section \ref{minPar}.  One can check that the subgroup $P_0^+(\R)$ of $\GG^+(\R)$ has $j_{\GG(\R)}(P_0^+(\R),\mathcal{H}) \neq 0$ and acts transitively on $\mathcal{H}$.  Now suppose $g \in \GG^+(\R)$, $Z \in \mathcal{H}$, and $p \in P_0^+(\R)$ satisfies $p \cdot i = Z$.  Then
\[\langle r(Z) g^{-1}, f \rangle = j_{\GG(\R)}(p,i)^{-1} \langle r(i) p^{-1} g^{-1}, f\rangle = \nu(gp)^{-1}j_{\GG(\R)}(p,i)^{-1} \langle r(i), fgp\rangle. \]
This last quantity is then nonzero by Lemma \ref{rk1 r(i)} below.  Hence $r(Z)g^{-1} = j_{\GG(\R)}(g,Z)r(gZ)$ for some $gZ$ in $H_3(B_\C)$.  Next, one verifies the equality $N(\iIm(W)) = \frac{1}{8i} \langle \overline{r(W)}, r(W) \rangle$ for any $W \in H_3(B_\C)$.  One deduces from this equality that $N(\iIm(gZ)) > 0$.  Then that $gZ \in \mathcal{H}$ follows from a continuity argument.

To check that the actions and factors of automorphy agree for $g \in \GSp_6^+(\R)$ and $Z \in \mathcal{H}_3$, it suffices to check this on generators $n(X), J_6$ and $M_{P,6}^+(\R)$, which may be done easily. \end{proof}

Before defining $\Phi_{\infty,2r}$, we need an additional lemma.  Recall the element $J$ of $\GG(\Q)$ that sends $(a,b,c,d) \mapsto (-d,c,-b,a)$.
\begin{lemma}\label{rk1 r(i)} If $v = (a,b,c,d)$, define $||v||^2 = \langle v, vJ \rangle$.  Then $||v||^2 = a^2 + \tr(b,b) + \tr(c,c) + d^2$, and hence if $v \in W(\R)$, $||v||^2 \geq 0$ and is only $0$ when $v = 0$.  If furthermore $v$ is of rank one, then $|\langle r(i),v \rangle|^2 = ||v||^2$. \end{lemma}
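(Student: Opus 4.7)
The plan is to dispatch the first formula by straight expansion of the symplectic form, and to reduce the rank-one claim to two clean identities: an explicit formula for $r(i)$ in $W(\C)$, and the trace identity $\tr(b,b) = \tr(b)^2 - 2\tr(b^\#)$, which lets the rank-one relations from Proposition \ref{rk1Sharp} do the rest of the work.

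First I would compute $||v||^2$ directly. Since $vJ = (-d, c, -b, a)$, applying the definition of $\langle\,,\,\rangle$ gives
\[\langle v, vJ \rangle = a\cdot a - \tr(b, -b) + \tr(c, c) - d \cdot (-a) = a^2 + \tr(b,b) + \tr(c,c) + d^2.\]
Over $\R$, using the coordinate formula $\tr(x,y) = \sum_i x_{ii}y_{ii} + \sum_{i<j}(x_{ij},y_{ij})$ together with $(u,u) = 2n(u) \geq 0$ (since $B_\R$ is the positive definite Hamilton quaternions), one reads off $\tr(b,b) = \sum c_i(b)^2 + 2\sum n(a_j(b)) \geq 0$ with equality iff $b = 0$, and similarly for $c$. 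This gives nonnegativity and the vanishing criterion.

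For the rank-one identity, I would first compute $r(i)$ explicitly. Taking $Z = i\cdot 1_3 \in H_3(B_\C)$, direct application of the formula (\ref{hSharp}) gives $(i\cdot 1_3)^\# = -1_3$ and $N(i\cdot 1_3) = -i$, so
\[r(i) = (1, -i\cdot 1_3, -1_3, i).\]
Pairing this against $v = (a,b,c,d)$ via $\langle\,,\,\rangle$ yields
\[\langle r(i), v \rangle = d - \tr(b) + i(\tr(c) - a),\]
where I use $\tr(1_3, x) = \tr(x)$. Hence
\[|\langle r(i), v\rangle|^2 = (d - \tr(b))^2 + (a - \tr(c))^2.\]

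The remaining task is to show that, under the rank-one hypothesis, this equals $a^2 + d^2 + \tr(b,b) + \tr(c,c)$. The key identity is $\tr(b,b) = \tr(b)^2 - 2\tr(b^\#)$, which I would verify by directly expanding both sides using the coordinate expressions for $\tr(\,,\,)$ and for the diagonal of $b^\#$ from (\ref{hSharp}); the same identity holds for $c$. Now Proposition \ref{rk1Sharp} gives $b^\# = ac$ and $c^\# = db$, whence $\tr(b^\#) = a\tr(c)$ and $\tr(c^\#) = d\tr(b)$. Substituting,
\[\tr(b,b) + \tr(c,c) = \tr(b)^2 + \tr(c)^2 - 2a\tr(c) - 2d\tr(b),\]
and adding $a^2 + d^2$ completes the square to $(d-\tr(b))^2 + (a - \tr(c))^2$, matching $|\langle r(i), v\rangle|^2$.

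The only non-routine step is recognizing the trace identity $\tr(b,b) = \tr(b)^2 - 2\tr(b^\#)$; once it is in hand, the rank-one relations from Proposition \ref{rk1Sharp} convert the cross terms in $|\langle r(i), v\rangle|^2$ into the traces appearing in $||v||^2$ with no room for error. Everything else is a bookkeeping calculation with the explicit formulas already set up in Section \ref{sec:Freud}.
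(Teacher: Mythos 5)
Your proof is correct and follows essentially the same route as the paper: both rest on the explicit formula $\langle r(i),v\rangle = (d-\tr(b)) + i(\tr(c)-a)$, the general identity $\tr(W,W)=\tr(W)^2-2\tr(W^\#)$, and the rank-one relations $b^\#=ac$, $c^\#=db$ from Proposition \ref{rk1Sharp}; you merely run the computation from $\|v\|^2$ toward $|\langle r(i),v\rangle|^2$ rather than the reverse. (There is a harmless typo in your expansion of $\langle v,vJ\rangle$: the last term should be $-d\cdot(-d)$, not $-d\cdot(-a)$, and your stated result $d^2$ is the correct one.)
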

\begin{proof} The formula for $||v||^2$ is trivial.  Suppose $v$ is rank one.  We have $\langle r(i),v \rangle = (d-\tr(b)) + i(\tr(c)-a)$.  Hence 
\begin{align*} |\langle r(i),v \rangle|^2 &= (d-\tr(b))^2 + (a-\tr(c))^2 \\ &=d^2 -2d\tr(b) + \tr(b)^2 + a^2 - 2a\tr(c)+\tr(c)^2 \\ &= a^2 + (\tr(b)^2-2\tr(b^\sharp)) + (\tr(c)^2 - 2\tr(c^\sharp)) + d^2, \end{align*}
where in the last equality, since $v$ is rank one, $db = c^\sharp$ and $ac = b^\sharp$.  For general $W$ one has the equality $\tr(W)^2 - 2\tr(W^\sharp) = \tr(W,W)$.  The lemma follows. \end{proof} 

\subsubsection{Archimedean calculation}
Define 
\begin{equation}\label{Phi2r}\Phi_{\infty,2r}(v) = e^{-||v||^2}\langle r(i), v \rangle^{2r}.\end{equation}
This is obviously a Schwartz function on $W(\R)$.  Note that we have $\langle r(i), vgk\rangle = j(k,i) \langle r(i),vg\rangle $ for $k$ in $K_{\infty,\Sp_6}$.  Our assumption on $\phi$ gives
\[\phi_\chi(g) = a_f(T)\nu(g)^{-r}j(g,i)^{-2r}e^{2\pi i \tr(T g\cdot i)}\]
if $g \in \GSp_6^+(\R)$, and $\phi_\chi(g) = 0$ if $\nu(g) < 0$.  (The modular form associated to $\phi$ has \emph{negative definite} Fourier coefficients when restricted to the lower half-space, $\mathcal{H}_3^-$.)  Furthermore, for $g \in \GSp_6^+(\R)$, one has the identity
\begin{equation}\label{f_O eqn}\nu(g)^{-1}j(g,i)^{-1} \langle r(i), f_\mathcal{O} g \rangle = \tr\left(T (g\cdot i)\right).\end{equation}
Note that the left hand side of (\ref{f_O eqn}), as a function of $g$, is right $Z(\R)K_{\infty,\Sp_6}$-invariant.  Hence to prove the equality, it suffices to take $g$ in the Siegel parabolic of $\GSp_6$, and then (\ref{f_O eqn}) follows from (\ref{f_O action}).

Now, putting (\ref{Phi2r}) into (\ref{IPhi}) and integrating over the center $Z(\R)$, we obtain
\begin{align*}I^\Phi_{\infty,2r}(\phi,s) &= \Gamma(s+r) \int_{U^0(\R)Z(\R)\backslash \GSp_6(\R)}{\frac{|\nu(g)|^s \langle r(i), f_\mathcal{O} g \rangle^{2r} \phi_\chi(g)}{|\langle r(i), f_\mathcal{O} g \rangle|^{2s+2r}}\,dg}\\ & = \Gamma(s+r)\int_{U^0(\R)Z(\R)\backslash \GSp_6^+(\R)}{a(T)\frac{|\nu(g)|^{s+r} \tr(TZ)^{2r} e^{2 \pi i \tr(TZ)}}{|\langle r(i), f_\mathcal{O} g\rangle|^{2s+2r}}\,dg}, \end{align*}
where $Z = g \cdot i = X+iY$.  One has
\[\frac{|\nu(g)|^{s+r}}{|\langle r(i),f_\mathcal{O} g \rangle|^{2s+2r}} = \frac{|\nu(g)^{-1}j(g,i)^{-2}|^{s+r}}{|\tr(TZ)|^{2s+2r}} = \frac{N(Y)^{s+r}}{|\tr(TZ)|^{2s+2r}}.\]
Hence, $I^\Phi_{\infty,2r}(\phi,s) = a(T)\Gamma(s+r)N(T)^{-(s+r)}J(s)$, with
\begin{align}\label{J(s)} J(s) &= \int_{U^0(\R)Z(\R)\backslash \GSp_6^+(\R)}{\frac{N(TY)^{s+r}\tr(TZ)^{2r} e^{2 \pi i \tr(TZ)}}{|\tr(TZ)|^{2s+2r}}\,dg}\nonumber \\ &=  \int_{U^0_1(\R)Z(\R)\backslash \GSp_6^+(\R)}{\frac{N(Y)^{s+r}\tr(Z)^{2r} e^{2 \pi i \tr(Z)}}{|\tr(Z)|^{2s+2r}}\,dg},\end{align}
where $U^0_1 =\{\mm{1}{X}{}{1} : \tr(X) = 0\}.$  The change of variables implicit in equality (\ref{J(s)}) is $g = m_T^{-1} g'$, where $m_T = \mm{T^{1/2}}{}{}{T^{-1/2}}$ and $T^{1/2}$ a positive-definite symmetric squareroot of $T$.

We now explain how to compute the integral $J(s).$  As the global integral representation is an analogue of Garrett's triple product \cite{garrett}, we follow the archimedean calculation of \cite{garrett}.  We compute up to nonzero constants.  First, we have that 
\[J(s) = \int_{Z(\R)\backslash M_{P,6}^+(\R)}{N(Y)^{s+r-2}e^{-2\pi \tr(Y)}\int_\R{\frac{(x+i \tr(Y))^{2r}}{|x+i \tr(Y)|^{2s+2r}}e^{2\pi i x}\,dx}\,d^*Y}.\]
Here $d^*Y$ is the $M_{P,6}^+(\R)$ invariant measure on $Y$, and we have used that $\delta_P(g) = \det(Y)^2$ for $g \in M_{P,6}^+(\R)$.  Making a variable change, the inner integral becomes 
\[\tr(Y)^{1-2s} \int_\R{\frac{(x+i)^{2r}}{|x+i|^{2s+2r}}e^{2\pi i \tr(Y)x}\,dx},\]
using that $\tr(Y)$ is positive.  We now introduce a $\Gamma$-integral, and change the order of integration to obtain
\begin{align*} (2\pi)^{1-2s}\Gamma(2s-1)J(s) =&\, \int_{\R}{\frac{(x+i)^{2r}}{|x+i|^{2s+2r}}\int_{Y}{N(Y)^{s+r-2}e^{2 \pi i (x+i) \tr(Y)} }} \\ & \times \left(\int_{t \geq 0}{e^{-2\pi t \tr(Y)}t^{2s-1}\,\frac{dt}{t}}\right)\,d^*Y\,dx.\end{align*}
Define a logarithm on $\C$ by removing the nonpositive reals, and use this logarithm to define $z^s$ for such a $z$.  Then switching the order of integration we obtain
\begin{align*} (2\pi)^{1-2s}\Gamma(2s-1)J(s) =&\, \int_{\R}{\int_{t \geq 0}{\frac{(x+i)^{2r}}{|x+i|^{2s+2r}}(1+t-ix)^{-3(s+r-2)}t^{2s-1}}}\\
& {{\times \left(\int_{Y}{N(Y')^{s+r-2}e^{-2\pi\tr(Y')}d^*(Y')}\right)\,\frac{dt}{t}}\,dx},\end{align*}
where $Y' = (1+t-ix)Y$.
The integral over $Y$ is the so-called Siegel integral, and it is a nonzero constant times
\[2^{-(3s+3r-6)}\Gamma_\R(2s+2r-4)\Gamma_\R(2s+2r-5)\Gamma_\R(2s+2r-6).\]
See, for instance, \cite[Theorem VII.1.1]{fK} for this fact.

We are thus left to evaluate the integral
\[K(s,r) = \int_{\R}{\int_{t \geq 0}{\frac{(x+i)^{2r}}{|x+i|^{2s+2r}}(1+t-ix)^{-3(s+r-2)}t^{2s-1}\frac{dt}{t}}\,dx},\]
which may be done exactly as in \cite[Proof of Proposition 5.1]{garrett}.  Up to a nonzero constant, one obtains
\[K(s,r) = 2^{-(3s+3r-6)}\frac{\Gamma(s+3r-5)\Gamma(2s-1)}{\Gamma(s+r)\Gamma(2s+2r-5)}.\]
Thus, again up to nonzero constants,
\[J(s) = (2\pi)^{2s-1} 4^{-(3s+3r-6)}\frac{\Gamma(s+3r-5) }{\Gamma(s+r)\Gamma(2s+2r-5)} \Gamma_\R(2s+2r-4)\Gamma_\R(2s+2r-5)\Gamma_\R(2s+2r-6).\]
Via the duplication formula $\Gamma_\R(2s+2r-5)\Gamma_\R(2s+2r-4) = \Gamma_\C(2s+2r-5)$, $J(s)$ simplifies to
\[4^{-(3s+3r-6)}\frac{\Gamma(s+3r-5)}{\Gamma(s+r)}\Gamma_\R(2s+2r-6),\]
again up to nonzero constants. Hence $I^\Phi_{\infty,2r}(\phi,s) = a(T)D_B^{-s} 4^{-(2s+2r-6)}\Gamma(s+3r-5) \Gamma_\R(2s+2r-6)$, since $D_B = 4N(T)$ by Corollary \ref{4detT}.  Thus, up to a nonzero constant,
\[I_{\infty,2r}^*(\phi,s) = a(T) \Gamma_\C(s+r-4)\Gamma_\C(s+r-3)\Gamma_\C(s+r-2)\Gamma_\C(s+3r-5),\]
completing the proof of Theorem \ref{archCalc}.

\section{The group $\GG(\A)$ and its Siegel Eisenstein series}\label{Eisenstein}
The purpose of this section is to prove Theorem \ref{NormEis}, where $\Phi_{2r,\infty}$ is defined in (\ref{Phi2r}).  We will prove this theorem by first proving it for $E_{0}^*(g,s)$, the Eisenstein series that is spherical at all places.  Then we will define essentially a Maass-Shimura differential operator $\mathcal D$, and show that $\mathcal{D} E_{2r}^*(g,s) = E_{2r+2}^*(g,s)$.  With this identity, Theorem \ref{NormEis} for $r \geq 0$ implies the same statement for $r+1$.  Here, $\mathcal{D}$ is an element of the universal enveloping algebra of the Lie algebra of $\GG(\R)$.  To analyze $E_{0}^*(g,s)$, we apply Langlands' theorem of the constant term and his functional equation \cite{langlandsFE}.  In order to do the requisite intertwining operator calculations needed to apply these theorems of Langlands, we need some basic results about $\GG(\A)$, such as a description of the minimal parabolic subgroup and Iwasawa decomposition.  We begin with a summary of these results on $\GG(\A)$.  

\subsection{Minimal parabolic and Iwasawa decomposition}\label{minPar}
Suppose $F$ is a field of characteristic zero, $B$ is a quaternion algebra over $F$, $J = H_3(B)$, and $\GG$ is the reductive $F$-group that preserves the symplectic and quartic form on $W = F \oplus J \oplus J \oplus F$, up to similitude.  We describe a parabolic subgroup $P_0$ of $\GG$, which is minimal when $B$ is a division algebra, and the corresponding Iwasawa decomposition. 

For an element $x$ of $W$, denote by $a(x), b(x), c(x),d(x)$ the elements of $F, J,J, F$ respectively so that $x = (a(x),b(x),c(x),d(x)).$ If $b$ is in $H_3(B) \subseteq M_3(B)$, we denote $b_{ij}$ the element of $B$ in the $i,j$ position of $b$.  We denote by $e_{ij}$ the element of $M_3(B)$ that is $1$ at the $i,j$ position and zero elsewhere.

Denote by $S_4$ the subspace of $W$ consisting of $(a,b,c,d)$ with $a=0$, $b_{ij} = 0$ unless $i, j = 1$, $c_{1k} = c_{j1} = 0$ for all $j, k$.  Then $S_4$ is an eight-dimensional isotropic subspace of $W$.  Furthermore, every element $s \in S_4$ has rank at most $2$.  Inside of $S_4$ we consider the two dimensional subspace $S_2$ spanned by $(0,0,e_{33},0)$ and $(0,0,0,1)$.  Then every element of $S_2$ has rank at most $1$.  Finally, inside of $S_2$ we consider the line $S_1 = F(0,0,0,1)$.  (The reasoning for the numbering will become apparent shortly.) Set $\mathcal{F}_0$ to be the flag $S_4 \supseteq S_2 \supseteq S_1$, and define $P_0$ to be the subgroup of $\GG$ stabilizing $\mathcal F_0$.  Now, define $S_3$ to be the seven-dimensional space in $S_4$ consisting of the $x = (0,b,c,d)$ with $b=0$, i.e., $S_3 = S_4 \cap (0,0,*,*)$.  It can be shown that since $P_0$ stabilizes $(0,0,0,*)$, it automatically stabilizes $(0,0,*,*)$, and thus $P_0$ stabilizes $S_3$.

\begin{proposition}\label{typeC3} The subgroup $P_0$ is a parabolic subgroup of $\GG$.  Denote by $T$ the diagonal maximal torus in $\GSp_6$, considered inside of $\GG$.  If $B$ is not split over $F$, then $T$ is a maximal split torus of $\GG$, $P_0$ is a minimal parabolic subgroup, and $\GG$ has rational root type $C_3$.\end{proposition}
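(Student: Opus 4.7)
My plan is to verify each of the four assertions in the proposition in turn.

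First, I observe that $P_0$ is defined as the stabilizer in $\GG$ of the flag $\mathcal F_0 = (S_4 \supseteq S_2 \supseteq S_1)$ together with the rank conditions implicit in its definition (every element of $S_4$ has rank $\leq 2$, every element of $S_2$ has rank $\leq 1$, and every nonzero element of $S_1$ has rank one). These conditions are closed and $\GG$-invariant, so the $\GG$-orbit of $\mathcal F_0$ inside the appropriate product of Grassmannians of $W$ is a closed (hence projective) subvariety, and $P_0$ is a parabolic subgroup of $\GG$. This is the same mechanism as in the rank-one case treated just before Proposition \ref{rk1Sharp}.

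Next, I would verify that the diagonal torus $T$ of $\GSp_6$ lies in $P_0$ and then determine the $T$-roots on $\mathrm{Lie}(\GG)$. Via the embeddings $\GSp_6 \hookrightarrow \GG$ and $W \hookrightarrow (\wedge^3 W_6 \otimes \nu^{-1}) \otimes_\Q B$, the torus $T$ acts diagonally on the basis $\{e_1, e_2, e_3, f_1, f_2, f_3\}$, so $W$ decomposes as a direct sum of $T$-weight spaces, and inspection shows that each of $S_1, S_2, S_4$ is a sum of weight spaces. For the weights of $T$ on $\mathrm{Lie}(\GG)$, I would use the explicit generating family $\{n(X), \overline{n}(Y), m(t)\}$ from Section \ref{sec:Freud}. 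Writing the characters of $T$ in coordinates $(e_1, e_2, e_3, \nu)$, one finds that modulo $\nu$ the nonzero weights split into two families: long roots $\pm 2 e_i$ with multiplicity one, coming from the $\GSp_6$-root subgroups (the diagonal entries of $H_3(B)$ in $\mathrm{Lie}\,n(X)$ and $\mathrm{Lie}\,\overline{n}(Y)$, and the $\mathrm{Lie}\,m(t)$ with diagonal $m \in \GL_3$); and short roots $\pm e_i \pm e_j$ for $i \neq j$ with multiplicity $\dim_\Q B$, coming from the $B$-valued off-diagonal entries of $H_3(B)$ in $\mathrm{Lie}\,n(X)$ and $\mathrm{Lie}\,\overline{n}(Y)$, and from $m(t)$ with $t(b) = m^* b m$ for $m \in \GL_3(B)$. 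This set of weights is the $C_3$ root system.

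Fixing the standard ordering $e_1 > e_2 > e_3 > 0$, I would then check that the root subgroups for all positive roots stabilize $\mathcal F_0$ while the negative simple root subgroups do not; this identifies $P_0$ with the minimal parabolic subgroup for this positive system. Finally, for the maximality of $T$ as an $F$-split torus, one computes the centralizer $Z_\GG(T)$ and finds it is the almost-direct product of $T$ with a semisimple anisotropic group built from copies of $\SL_1(B)$ attached to the short-root directions. The hypothesis that $B$ is non-split over $F$ is exactly what forces each $\SL_1(B)$-factor to be $F$-anisotropic, so $Z_\GG(T)$ contains no $F$-split torus strictly larger than $T$, and $T$ is maximal split.

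The main obstacle is the Lie-algebra bookkeeping in the second step: one must carefully unpack $\mathrm{Lie}(\GG)$ using the parametrizations of Section \ref{sec:Freud} and track how the quaternion structure on $B$ splits each short-root relative-weight space into a $\dim_\Q B$-dimensional block. Everything else is either a general principle (stabilizers of flags defined by closed conditions are parabolic; centralizers of split tori detect the anisotropic kernel) or a direct calculation once the explicit $T$-weight decomposition is in hand.
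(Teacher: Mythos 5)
The paper does not actually prove this proposition --- it states that the result is well-known and omits the argument --- so there is nothing to compare against line by line. Your plan is the standard proof, and the substantive computation at its core is correct: the $T$-weights on $\mathrm{Lie}(\GG)$ are $\pm 2e_i$ with multiplicity one (diagonal entries of $H_3(B)$ in $\mathrm{Lie}\,n(X)$ and $\mathrm{Lie}\,\overline{n}(Y)$) and $\pm e_i \pm e_j$, $i\neq j$, with multiplicity $\dim_F B = 4$ (off-diagonal $B$-valued entries, and the root spaces of $\GL_3(B)$ inside the Levi), which is the $C_3$ system; the dimension count $66 = 12 + 6\cdot 1 + 12\cdot 4$ confirms that $Z_{\GG^1}(T)$ has dimension $12$ and is, up to isogeny, $T$ times three copies of the norm-one group of $B$, anisotropic exactly when $B$ is non-split. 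One small slip: the parenthetical attributing part of the long-root contribution to ``$\mathrm{Lie}\,m(t)$ with diagonal $m\in\GL_3$'' is off, since diagonal $m$ lands in the torus, not in a root space; the $\pm 2e_i$ root spaces come only from the diagonal entries of $n(X)$ and $\overline{n}(Y)$.

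The one step you state too quickly is the closedness of the orbit in your first paragraph. That the rank conditions are closed and $\GG$-invariant shows only that the set of flags satisfying them is a closed subvariety of the product of Grassmannians \emph{containing} the orbit of $\mathcal F_0$; it does not by itself show the orbit is closed, and an orbit that is not closed need not have parabolic stabilizer. Note that in the rank-one case the paper explicitly supplies the missing ingredient: it first proves $\GG$ acts \emph{transitively} on the rank-one lines, so the orbit coincides with the closed set cut out by the conditions. To repair your argument you should either prove the analogous transitivity on flags $S_4 \supseteq S_2 \supseteq S_1$ satisfying the stated rank conditions (which requires specifying the conditions sharply enough that they single out a single orbit), or bypass the orbit argument entirely by checking over $\bar F$ --- where $\GG$ is split of type $D_6$ --- that $P_0$ contains a Borel subgroup; a closed subgroup containing a Borel is automatically parabolic and connected. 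With that repair, and the root-subgroup verification you describe (positive root groups stabilize $\mathcal F_0$, negative simple ones do not, so $P_0$ is the minimal parabolic $Z_\GG(T)\prod_{\alpha>0}U_\alpha$), the argument is complete.
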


Now suppose $F = \Q$, and $B_\R$ is a division algebra.  We describe the associated Iwasawa decomposition of $\GG(\A)$.  Define $K_p$ to be the subgroup of $\GG(\Q_p)$ that acts invertibly on the lattice $W(\Z_p)$.  Define $K_\infty$ to be the subgroup of $\GG^{1,+}(\R)$ that commutes with $J$.  Here $\GG^1$ denotes the elements of $\GG$ with similitude equal to $1$.  Because $(v,w) := \langle v, wJ\rangle$ is an inner product on $W(\R)$, $K_\infty$ is compact.
\begin{proposition}\label{Iwasawa} The subgroup $K_\infty$ is the stabilizer of $i$ for the $\GG^{1,+}(\R)$ action on the Hermitian symmetric space $\mathcal{H}$.  For all places $v$ of $\Q$, one has the decomposition $\GG(\Q_v) = P_0(\Q_v)K_v$. \end{proposition}

We omit the proofs of Propositions \ref{typeC3} and \ref{Iwasawa}, as they appear to be well-known.

\subsection{The spherical Eisenstein series}
We are now ready to analyze the \emph{spherical} normalized Eisenstein series.  Recall we denote by $P$ the parabolic subgroup of $\GG$ that is the stabilizer of the line spanned by $f = (0,0,0,1)$.  If $p \in P$, then $p$ acts on this line as multiplication by some $d$ in $\GL_1$, and acts on the one-dimensional space $W/(f)^\perp$ as multiplication by some $a$ in $\GL_1$.  Define a character $\chi_s: P(\A)\rightarrow \C^\times$ as $\chi_s(p) = |a/d|^s$.  For every place $v$ of $\Q$, set $f_v(g,s)$ to be the unique element in $Ind_{P(\Q_v)}^{\GG(\Q_v)}(\chi_s)$ that is $1$ on $K_v$. The unnormalized spherical Eisenstein series is 
\[E_0(g,s) =\sum_{\gamma \in P(\Q) \backslash \GG(\Q)}{f(\gamma g,s)},\]
where $f(g,s) = \prod_{\text{ all } v}{f_v(g_v,s)}$ and the sum converges for $Re(s) >> 0$.  The normalized Eisenstein series $E_0^*(g,s)$ is
\begin{align}\label{normE0} E_0^*(g,s) =& \, D_B^{s}\left(\prod_{v < \infty \text{ split}}{\zeta_v(2s)\zeta_v(2s-2)\zeta_v(2s-4)}\right)\left(\prod_{v < \infty \text{ ramified}}{\zeta_v(2s)\zeta_v(2s-4)} \right) \\ & \times  \Gamma_\R(2s) \Gamma_\R(2s-4) \Gamma_\R(2s-8) E_0(g,s).\end{align}
Here $D_B = \prod_{p < \infty \text{ ramified}}{p}$ is the product of the finite primes $p$ for which $B \otimes \Q_p$ is not split.  By the theory of Langlands, $E_0^*$ has meromorphic continuation in $s$ and is a function of moderate growth where it is finite.  The purpose of this section is to describe the proof of the following theorem.
\begin{theorem}\label{SpherEis} The Eisenstein series $E_0^*(g,s)$ has at most finitely many poles, all contained in the set of integer and half-integer points in the interval $[0,5]$, and satisfies the functional equation $E_0^*(g,s) = E_0^*(g,5-s)$. \end{theorem}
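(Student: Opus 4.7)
My plan is to apply Langlands' constant-term formula and functional equation \cite{langlandsFE}, reducing the theorem to explicit local computations of the spherical long-element intertwining operator on $\GG$. Since $\GG$ has rational root type $C_3$ by Proposition \ref{typeC3} and $P$ is a rational maximal parabolic, $P$ contains the minimal parabolic $P_0$ of section \ref{minPar} and $\chi_s$ lifts to a character $\lambda_s$ of the maximal $\Q$-split torus $T$. First I would write $\lambda_s$ explicitly in terms of the simple roots of $C_3$, identify the longest Weyl element $w_0 \in W(C_3)$, and verify that $w_0$ sends $\lambda_s$ to $\lambda_{5-s}$ after the standard $\rho$-shift; this is the algebraic origin of the symmetry $s \mapsto 5-s$.

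Next, Langlands' functional equation yields an identity of the form $E_0(g,s) = M(w_0, s) E_0(g, 5-s)$, so it suffices to compute $M(w_0, s)$ on the spherical vector. At each unramified place the Gindikin--Karpelevich formula expresses this as a product, over the positive rational roots of $C_3$ made negative by $w_0$, of ratios of local $L$-factors depending on the restricted root and its multiplicity. Working this out should yield the three zeta factors at $2s, 2s-2, 2s-4$ at split finite places, the three gamma factors at $2s, 2s-4, 2s-8$ at the archimedean place, and the shorter product $\zeta_p(2s)\zeta_p(2s-4)$ plus an extra local factor at finite primes ramified in $B$. The normalizing factors in (\ref{normE0}), including $D_B^s$ which globally absorbs these extra local factors, are precisely chosen so that the normalization becomes invariant under $s \mapsto 5-s$ after combining with $M(w_0, s)$, giving $E_0^*(g,s) = E_0^*(g, 5-s)$.

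For the pole bound, $E_0(g,s)$ is holomorphic away from a finite set by Langlands, and the normalizing $\Gamma_\R$ and zeta factors in (\ref{normE0}) have only finitely many poles, all located at integer or half-integer values of $s$. Both $E_0(g,s)$ and the normalizing factors are holomorphic for $Re(s) > 5$ (the former by absolute convergence of the defining sum, the latter by the known pole locations), so $E_0^*(g,s)$ is holomorphic there; the functional equation then gives holomorphy for $Re(s) < 0$, and the remaining poles of $E_0^*$ are confined to $\tfrac12 \Z \cap [0, 5]$.

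The main obstacle I expect is the local intertwining calculation at the ramified finite primes of $B$, where $\GG(\Q_p)$ is an inner form and the standard split-case Gindikin--Karpelevich formula does not apply directly. This calculation must be carried out by hand via the Iwasawa decomposition of section \ref{minPar}, and the resulting local factor must exactly match $\zeta_p(2s)\zeta_p(2s-4)$ together with the local contribution to $D_B^s$. A related subtlety appears at the archimedean place, where $\GG(\R)$ is the inner form coming from Hamilton's quaternions; there the spherical $K_\infty$-intertwining operator must be shown to produce $\Gamma_\R(2s)\Gamma_\R(2s-4)\Gamma_\R(2s-8)$, requiring enough explicit structure of $P_0(\R)$ and $K_\infty$ to mimic the $p$-adic computation.
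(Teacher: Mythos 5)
Your outline of the functional equation matches the paper's argument: write $\chi_s=\lambda_s\delta_{P_0}^{1/2}$, factor the intertwining operator attached to the open cell of $P\backslash\GG/P$ into rank-one pieces, and handle the non-split places by noting that the rank-one subgroups attached to the two short simple roots of the rational $C_3$ system (Proposition \ref{typeC3}) are of the form $\GL_2(B)/\mu_2$ at \emph{every} place; the ``hand computation'' you flag as the main obstacle is then a single $\GL_2(B)$ intertwining integral, which the paper evaluates uniformly (Lemma \ref{GL2(B)}) via a Godement-type section built from a Schwartz function on $B^2$, while the long roots live inside $\Sp_6\subset\GG$ and are standard. Your predicted shape of the local factors and the role of $D_B^s$ are right, though the final cancellation also uses $\zeta_v(s)/\zeta_v(-s)=-p^s$, $\Gamma_\R(2z)\Gamma_\R(2-2z)=1/\sin(\pi z)$, and the fact that $B$ is ramified at an \emph{even} number of places.

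The genuine gap is in the pole bound. Holomorphy of $E_0^*(g,s)$ for $Re(s)>5$ together with the functional equation only confines the singularities to the closed strip $0\le Re(s)\le 5$; it does not force them onto the real axis, let alone onto the discrete set $\tfrac{1}{2}\Z\cap[0,5]$, and multiplying the (unknown) singularities of $E_0$ by normalizing factors whose own poles happen to be half-integral proves nothing about where the poles of the product lie. The missing ingredient is Langlands' theorem that $E_0$ continues holomorphically exactly where its constant term along the \emph{minimal} parabolic $P_0$ does, combined with the explicit computation of that constant term: here $P_0(\Q)\backslash\GG(\Q)/P(\Q)$ has eight elements, so $E_0^{P_0}(g,s)=\sum_{i=1}^{8}c_{w_i}(\lambda_s)\Phi^{\GG}_{w_i(\lambda_s)}$ as in \cite[eq.\ (5.6)]{gpsrDoubling}, each $c_{w_i}$ being a product of ratios $\zeta_{B_v}(\langle\beta^\vee,\lambda_s\rangle)/\zeta_{B_v}(\langle\beta^\vee,\lambda_s\rangle+2)$ and $\zeta_v(2\langle\beta^\vee,\lambda_s\rangle)/\zeta_v(2\langle\beta^\vee,\lambda_s\rangle+1)$ over the rational roots made negative by $w_i$. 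One must check that after inserting the normalization (\ref{normE0}) \emph{each} of these eight terms has only finitely many poles, all at half-integers in $[0,5]$. The long intertwining operator you compute for the functional equation is only the eighth of these terms; the seven intermediate operators are precisely what control the poles, and they are absent from your argument.
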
 
The proof will be an application of Langlands' theory of the constant term, and the Langlands' functional equation.  We begin with the computation of intertwining operators on $\GL_2(B)$.

For a finite prime $p$, set $K^B_p \subseteq \GL_2(B_p)$ to be the subgroup acting invertibly on the lattice $B_0(\Z_p) \oplus B_0(\Z_p)$.  Define $K^B_\infty$ to be the subgroup of $\GL_2(B_\infty)$ consisting of those $m \in \GL_2(B_\infty)$ with $m m^* = 1$.  Consider the function $f_v^B(g,s_1,s_2): \GL_2(B_v) \rightarrow \C$ defined by 
\[f^B_v\left(\mm{b_1}{*}{}{b_2}, s_1,s_2\right) = |n(b_1)|^{s_1+1}|n(b_2)|^{s_2-1}\]
and $f_v^B(K_v,s_1,s_2) = 1$.

\begin{definition} If $v < \infty$, and $B_v$ is split, define $\zeta_{B_v}(s) = \zeta_v(s)\zeta_v(s-1)$.  If $v < \infty$ and $B_v$ is ramified, define $\zeta_{B_v}(s) = \zeta_v(s)$.  If $B_\infty$ is ramified (as we assume throughout this section), define $\zeta_{B_\infty}(s) = D_B^{s/2}\Gamma_\C(s)$. \end{definition}

\begin{lemma}\label{GL2(B)} For $x$ in $B$, define $\overline{n}(x) \in \GL_2(B)$ to be the matrix $\mm{1}{}{x}{1}$.  Suppose the measure on $B(\A)$ is normalized so that $B(\Q) \backslash B(\A)$ has measure one, and $B_0(\Z_p)$ has measure one for all finite primes, both split and ramified.  Then, if $Re(s_1 -s_2) >>0$,
\[\int_{B_v}{f_v^B(\overline{n}(x),s_1,s_2)\,dx} = \frac{\zeta_{B_v}(s_1-s_2)}{\zeta_{B_v}(s_1-s_2 + 2)}.\] \end{lemma}
\begin{proof} Write $e, f$ for the basis of $B^2$.  For a finite prime $v$, define $\Phi_v$ on $B_v^2$ to be the characteristic function of $B_0(\Z_v)^2$.  Define $\Phi_\infty(b_1 e + b_2 f) = e^{-2\pi (n(b_1) + n(b_2))}$.  Note that, for all $v$, $\Phi_v(b_1 e+ b_2 f) = \Phi_v(b_1 e) \Phi_v(b_2 f)$, and that $\Phi_v(w K_v) = \Phi_v(w)$ for all $w$ in $B_v^2$.  Set $s = s_1 - s_2$.  Consider
\[f^\Phi(g,s_1,s_2) = |\det(g g^*)|^{s_1+1}\int_{B_v^\times}{\Phi_v(yfg) |n(y)|^{s+2}\,dy}.\]
Then, the transformation properties of $f^\Phi(g,s_1,s_2)$ implies the identity 
\[f^\Phi(g,s_1,s_2) = f^\Phi(1,s_1,s_2) f_v^B(g,s_1,s_2).\]
We compute
\begin{align*} \int_{B_v}{f^\Phi(\overline{n}(x),s_1,s_2)\,dx} =& \int_{B_v^\times}\int_{B_v}{\Phi(y(xe+f))|n(y)|^{s+2}\,dx}\,dy \\ =& \int_{B_v^\times}|n(y)|^{s+2} \Phi_v(yf) \int_{B_v}{\Phi_v(yxe)\,dx}\,dy \\ =& \left(\int_{B_v^\times}{|n(y)|^{s}\Phi_v(yf)\,dy}\right) \left(\int_{B_v}{\Phi_v(xe)\,dx} \right)\end{align*}
Hence
\[\int_{B_v}{f_v^B(\overline{n}(x),s_1,s_2)\,dx} = \frac{\left(\int_{B_v^\times}{|n(y)|^{s}\Phi_v(yf)\,dy}\right) \left(\int_{B_v}{\Phi_v(xe)\,dx} \right)}{\left(\int_{B_v^\times}{|n(y)|^{s+2}\Phi_v(yf)\,dy}\right)}.\]
When $v < \infty$, the first integral in the numerator of this fraction gives $\zeta_{B_v(s)}$ and the second integral gives $1$. When $v = \infty$, the first integral in the numerator gives $\Gamma_\C(s)$, up to a nonzero constant.  Via Lemma \ref{measInfinity}, the second integral gives $D_B^{-1}$.  The lemma follows.
 \end{proof}

Since $B_\R$ is the Hamilton quaternions, we have $B_\R = \R \oplus \R i \oplus \R j \oplus \R k$ with $i^2 = j^2 = k^2 = -1$, and $ij = k$.
\begin{lemma}\label{measInfinity} Suppose $\mu_{std}$ is the measure on $B_\R$ that gives the box spanned by $1, i , j, k$ measure one, and $\mu_{norm}$ is the measure determined by the statement of Lemma \ref{GL2(B)}.  Then $\mu_{norm} = 4D_B^{-1} \mu_{std}$.\end{lemma}
\begin{proof} The normalized measure is characterized by $\mu_{norm}\left(B_0 \backslash B_\R \right) = 1$.  Suppose $B_0$ corresponds to the half-integral symmetric matrix $T$.  Then the covolume of $B_0$ in $B_\R$ in $\mu_{std}$ is $\det(T)$.  Hence $\mu_{norm} = \det(T)^{-1} \mu_{std}$. Since $D_B = 4 \det(T)$ by Corollary \ref{4detT}, we are done. \end{proof}

Denote by $M_0 \subseteq P_0$ the centralizer of the maximal split torus $T$ defined in Proposition \ref{typeC3}.  We now describe some characters on $M_0(\A)$.  First, the weight spaces for the action of $T$ on $W$ are exactly the coordinate spaces, i.e., the $a(v), b_{ij}(v), c_{ij}(v), d(v)$, with $i \leq j$.  Since $T$ acts on these spaces with distinct weights, $M_0$ is exactly the subgroup of $\GG$ preserving the coordinate spaces.  

Consistent with the notation in (\ref{h def}), we write $a_i(b(v)) = b_{i+1,i-1}(v)$ and $c_i(b(v)) = b_{i,i}(v)$, with the indices in the set $\{1,2,3\}$ and taken modulo $3$.  If $m \in M_0$, one can verify easily that $m$ preserves the norm on the coordinate spaces $a_i(b(v))$, up to scaling.  For $m \in M_0$, define $m_i$ to be the endomorphism of $B = a_i(\{b(v): v \in W\})$ so that $a_i(b(vm)) = \nu(m) (a_i(b(v)))m_i$ for all $v$ in $W$. (In fact, one can show that $\nu$ and the $m_i$ determine $m$.)  Since $m_i$ preserves the norm on $B$ up to scaling, define $|m_i|$ by the equality $|n(b m_i)|=|m_i||n(b)|$, $b \in B$.  Similarly, define $x_i(m)$ by the equality $c_i(b(v m)) = \nu(m) x_i(m) c_i(b(v))$.  One immediately sees $|m_1| = |x_2 x_3|$, $|m_2| = |x_1 x_3|$, $|m_3| = |x_1x_2|$. 

Recall the elements $e = (1,0,0,0)$ and $f = (0,0,0,1)$ of $W$.  Since $M_0$ preserves the quartic form $Q$ up to similitude squared, $fm = (\nu(m)x_1x_2x_3)^{-1}f$.  Since $M_0$ preserves the symplectic form up to similitude, $e m = \nu(m)^2 x_1 x_2 x_3 e$.  Thus $\chi_s(m) = |\nu(m)|^{3s}|m_1|^s|m_2|^s|m_3|^s$.  One has the formula $\delta_{P_0}(m) =|\nu(m)|^{15} |m_1|^9|m_2|^5 |m_3|.$  Set 
\[\lambda_s(m) = |\nu(m)|^{3s-15/2}|m_1|^{s-9/2}|m_2|^{s-5/2}|m_3|^{s-1/2},\]
so that $\chi_s(m) = \lambda_s(m)\delta_{P_0}^{1/2}(m)$.

To understand the analytic properties of $E_0^*(g,s)$, it suffices to restrict to $\GG^1$, so from now on we drop the $\nu$'s.  From Proposition \ref{typeC3}, $\GG$ is of type $C_3$; let us write $\pm u_i \pm u_j,$ $i \neq j$, $\pm 2 u_i$ for its root system determined by the unipotent radical of $P_0$.  Before proceeding to the intertwining operator calculations, we state one more preliminary fact.  Consider the space of characters $\operatorname{Char}(M_0(\A))^{|\cdot |}$ of $M_0(\A)$ that factor through the maps $|m_i|$; i.e., this is the space of characters consisting exactly of the maps
\[m \mapsto |m_1|^{s_1}|m_2|^{s_2}|m_3|^{s_3}\]
for $s_1, s_2, s_3 \in \C$.  We write $s_1 u_1 + s_2 u_2 + s_3 u_3$ for this character.  Denote by $\WW = (\pm 1)^3 \rtimes S_3$ the Weyl group of $\Sp_6$, embedded in $\GG$.  We identify $\WW$ with the subset of $\Sp_6$ that preserves the set $\{\pm e_1, \pm e_2, \pm e_3, \pm f_1, \pm f_2, \pm f_3\}$.  Using the action of $\WW$ on $\wedge^3(W_6)$, one checks easily that $\WW$ preserves $\operatorname{Char}(M_0(\A))^{|\cdot |}$ and acts on it in the usual way; e.g., the transposition $(12)$ in $S_3$ takes $s_1 u_1 + s_2 u_2 + s_3 u_3$ to $s_2 u_1 + s_1 u_2 + s_3 u_3$ and the inversion $(-1,1,1)$ in $(\pm 1)^3$ takes this character to $-s_1 u_1 + s_2 u_2 + s_3 u_3$.  

Restricting characters in $\operatorname{Char}(M_0(\A))^{|\cdot |}$ to $T(\A)$ and dividing by two gives an identification
\[\operatorname{Char}(M_0(\A))^{|\cdot |} \rightarrow X_{un}^*(T(\A)) \simeq X^*(T) \otimes_\Z \C,\]
where $X_{un}^*(T(\A))$ denotes the characters on $T(\A)$ that factor through $| \cdot |$.  Thus, if $\alpha^\vee$ is a coroot for $\Sp_6$, and $\mu$ is a character of $M_0(\A)$ in $\operatorname{Char}(M_0(\A))^{|\cdot |}$, we may write $\langle \alpha^\vee, \mu \rangle$.  For example, for the character $\mu = s_1 u_1 + s_2 u_2 + s_3 u_3$, and $\alpha^\vee = (u_1 + u_2)^\vee$, $\langle \alpha^\vee, \mu \rangle = s_1 + s_2$.

Now we describe the constant term and functional equation computations.  For the functional equation, set $w_8 \in \WW$ to be the element represented by 
\[w_8 = \left(\begin{array}{ccc|ccc} & & & & & 1 \\ & & & &1& \\ & & & 1& & \\ \hline & &-1& & & \\ &-1 & & & & \\ -1& & & & & \end{array}\right) \in \Sp_6,\]
so that $w_8(e_i) = f_{4-i}$ and $w_8(f_i) = -e_{4-i}$.  Then for the spherical section $f(g,s)$ in $Ind_{P_0(\A)}^{\GG(\A)}(\chi_s)$, define
\[M(f)(g,s) = \int_{U_P(\A) \cong H_3(B)(\A)}{f(w_8^{-1}n(X)g,s)\,dX}\]
for $Re(s)$ sufficiently large.  The measure is normalized so that every closed connected subgroup $N$ of $U_0$ has $N(\Q) \backslash N(\A)$ given measure one.  Then $M(f)$ is a spherical section in $Ind_{P_0(\A)}^{\GG(\A)}(\chi_{5-s})$.  Indeed, $w_8$ is the Weyl group element taking $u_1 \mapsto -u_3$, $u_2 \mapsto -u_2$, $u_3 \mapsto -u_1$, and thus that $M(f)$ is in $Ind_{P_0(\A)}^{\GG(\A)}(\chi_{5-s})$ follows immediately from the computation of $\delta_{P_0}$ above.  That $M(f)$ is spherical of course follows from the Iwasawa decomposition.  Hence $M(f_s)$ is equal to $c(s)f(g,5-s)$ for some function $c(s)$.  Langlands' functional equation of the Eisenstein series is $E_0^*(g,f_s) = E_0^*(g, M(f_s))$.  We will determine $c(s)$ shortly, and then use this to check the functional equation stated in Theorem \ref{SpherEis}.

Now for the constant term of the Eisenstein series.  We must compute
\[E_0^{*,P_0}(g,s):=\int_{U_0(\Q)\backslash U_0(\A)}{E_0^*(ug,s)\,du}.\]
Langlands' theory of the constant term says that the Eisenstein series has analytic continuation to the same region as that of the constant term.  For a character $\chi$ in $\operatorname{Char}(M_0(\A))^{|\cdot |}$, denote by $\Phi^{\GG}_\chi$ the unique spherical element of $Ind_{P_0(\A)}^{\GG(\A)}(\delta_{P_0}^{1/2} \chi)$. For $w \in \WW$, set
\[A(w,\chi)(g) := \int_{N_w(\A)}{\Phi^{\GG}_\chi(w^{-1}ng)\,dn}\]
when it is absolutely convergent.  Here $N_w$ is the group $\prod_{ \alpha > 0, w^{-1}(\alpha) < 0}{U_{\alpha}}$, and $U_\alpha$ is the root group in $\GG$ corresponding to the root $\alpha$.  Then $A(w,\chi)(g)$ is in $Ind_{P_0(\A)}^{\GG(\A)}(\delta_{P_0}^{1/2} w(\chi))$, and is equal to $c_w(\chi)\Phi^{\GG}_{w(\chi)}$ for a certain factorizable function $c_w = \prod_v{c^v_w}$ on $\operatorname{Char}(M_0(\A))^{|\cdot |}.$  By the usual argument \cite[equation (5.6)]{gpsrDoubling}, one has
\[E_0^{P_0}(g,s) = \sum_{w \in \Omega}{c_w(\lambda_s)\Phi^{\GG}_{w(\lambda_s)}(g)},\]
where $\Omega$ is a set of representatives in $\WW$ for the double coset $P_0(\Q) \backslash \GG(\Q) \slash P(\Q).$

This double coset has eight elements.  The eight Weyl group elements $w_i \in \Omega$ can be obtained from \cite[Lemma 5.1]{gpsrDoubling}.  They are indexed by the sets $S_i$ of negative roots $\beta$ for which $w_i(\beta)$ is positive.  The sets are
\begin{enumerate}
\item $S_1 = \varnothing$; $c^v_1(s) = 1$.
\item $S_2 = \{-2u_3\}$; $c_2^v(s) = \frac{\zeta_v(2s-1)}{\zeta_v(2s)}$.
\item $S_3 = S_2 \cup \{-(u_2+u_3)\}$; $c_3^v(s) = c_2^v(s) \frac{\zeta_{B_v}(2s-3)}{\zeta_{B_v}(2s-1)}$.
\item $S_4 = S_3 \cup \{-2u_2\}$; $c_4^v(s) = c_3^v(s) \frac{\zeta_v(2s-5)}{\zeta_v(2s-4)}$.
\item $S_5 = S_3 \cup \{-(u_1+u_3)\}$; $c_5^v(s) = c_3^v(s) \frac{\zeta_{B_v}(2s-5)}{\zeta_{B_v}(2s-3)}$.
\item $S_6 = S_5 \cup \{-2u_2\}$; $c_6^v(s) = c_5^v(s) \frac{\zeta_v(2s-5)}{\zeta_v(2s-4)}$.
\item $S_7 = S_6 \cup \{-(u_1+u_2)\}$; $c_7^v(s) = c_6^v(s) \frac{\zeta_{B_v}(2s-7)}{\zeta_{B_v}(2s-5)}$.
\item $S_8 = S_7 \cup \{-2u_1\} = \overline{U_P}$; $c_v(s) = c_8^v(s) = c_7^v(s) \frac{\zeta_{v}(2s-9)}{\zeta_{v}(2s-8)}$. \end{enumerate}

We have also listed the functions $c_i^v(s) := c_{w_i}^v(\lambda_s)$.  Here, for $v = \infty$, $\zeta_v(s)$ means $\Gamma_\R(s)$.  The computation of the functions $c_i(s)$ proceeds by the usual Gindikin-Karpelevich argument of factorization of intertwining operators and reduction to rational rank one.  Recall that if the length of $w \in \WW$ is $r$, and $w = s_1 s_2 \cdots s_r$ is an expression of $w$ as a product of $r$ reflections, $s_i$ the reflection in the simple root $\alpha_i$, then
\begin{equation}\label{cProduct}c(w,\chi) = \prod_{k=1}^r \left(\int_{U_{\alpha_k}(\A)}{\Phi^{\GG}_{s_{k+1}\cdots s_r(\chi)}(s_k^{-1}n)\,dn}\right).\end{equation}

If $\alpha$ is a short simple root, then the root groups $U_\alpha$ and $U_{-\alpha}$ sit inside a $\GL_2(B)/\mu_2$ in $\GG$, that acts on $W$ via the maps $m(t)$ of section \ref{sec:Freud}.  Now, one checks that if $K_v^B$ is the compact subgroup of $\GL_2(B_v)$ defined above Lemma \ref{GL2(B)}, then $K_v^B/\mu_2$ sits inside $K_v$, for both of these short simple roots, and for all places $v$ of $\Q$.  It thus follows from Lemma \ref{GL2(B)} that if $\alpha_k$ is a short simple root,
\[ \int_{U_{\alpha_k}(\Q_v)}{\Phi^{\GG}_{s_{k+1}\cdots s_r(\lambda_s)}(s_k^{-1}n)\,dn} = \frac{ \zeta_{B_v}(\langle \alpha_k^\vee, s_{k+1}\cdots s_r(\lambda_s)\rangle)}{ \zeta_{B_v}(\langle \alpha_k^\vee, s_{k+1}\cdots s_r(\chi)\rangle +2)} = \frac{\zeta_{B_v}(\langle \beta_k^\vee, \lambda_s\rangle)}{\zeta_{B_v}(\langle \beta_k^\vee, \lambda_s\rangle +2)}\]
where $\beta_k = s_r \cdots s_{k+1}(\alpha_k)$.

If $\alpha_k$ is long root, the analogous formula is
\[ \int_{U_{\alpha_k}(\Q_v)}{\Phi^{\GG}_{s_{k+1}\cdots s_r(\lambda_s)}(s_k^{-1}n)\,dn} = \frac{\zeta_v(2 \langle \beta_k^\vee, \lambda_s \rangle)}{\zeta_v(2 \langle \beta_k^\vee, \lambda_s \rangle +1)}.\]
This formula is even easier to establish, because the long root subgroups sit inside $\Sp_6$, so the computation may done in an $\SL_2$ sitting inside $\Sp_6$.

For $w = s_1 \cdots s_r$, the roots $-\beta_k$ are exactly the negative roots $\beta$ for which $w(\beta) > 0$.  The computation of the $c_i^v(s)$ above now follows.  Putting in the normalization from (\ref{normE0}), one now checks without much effort that the constant term of the normalized Eisenstein series $E_0^*(g,s)$ has at worst finitely many poles, and they all occur at the integral and half-integral points in the interval $[0,5]$.  This completes the proof of the statement about the poles of $E_0^*(g,s)$.

Now for the functional equation.  Recall $M(f_s) = c(s)f_{5-s}$.  Then $c(s) = c_8(s)$ is determined above.  For $v < \infty$ split, one finds
\[c_v(s) = \frac{  \zeta_v(2s-5)\zeta_v(2s-7)\zeta_v(2s-9)}{\zeta_v(2s)\zeta_v(2s-2)\zeta_v(2s-4)}.\]
For $v < \infty$ ramified, one gets
\[c_v(s) = \frac{  \zeta_v(2s-5)\zeta_v(2s-7)\zeta_v(2s-9)}{\zeta_v(2s)\zeta_v(2s-4)\zeta_v(2s-8)}.\]
Using the duplication formula $\Gamma_\R(z) \Gamma_\R(z+1) = \Gamma_\C(z)$, one obtains
\[c_\infty(s) = D_B^{-3}\frac{\Gamma_\C(2s-5)\Gamma_\C(2s-7)\Gamma_\C(2s-9)}{\Gamma_\R(2s)^2 \Gamma_\R(2s-4)^2 \Gamma_\R(2s-8)^2}.\]
Now, we have 
\[E_0^*(g,s) = D_B^{s}\Lambda_\Q(2s)\Lambda_\Q(2s-2) \Lambda_\Q(2s-4)\left(\prod_{v < \infty \text{ ramified}}{\frac{1}{\zeta_v(2s-2)}} \right) \frac{\Gamma_\R(2s-8)}{\Gamma_\R(2s-2)} E_0(g,s)\]
where $\Lambda_{\Q}(s) = \prod_v{\zeta_v(s)}$ denotes the completed Riemann zeta function.  Thus, since $\Lambda_\Q(s) = \Lambda_\Q(1-s)$, $E_0^*(g,5-s)$
\[ = D_B^{5-s}\Lambda_\Q(2s-9)\Lambda_\Q(2s-7) \Lambda_\Q(2s-5)\left(\prod_{v < \infty \text{ ramified}}{\frac{1}{\zeta_v(8-2s)}} \right) \frac{\Gamma_\R(2-2s)}{\Gamma_\R(8-2s)} E_0(g,5-s).\]
The intertwining operator gives $E_0^*(g,s)$
\[ = D_B^{s-3}\Lambda_\Q(2s-9)\Lambda_\Q(2s-7) \Lambda_\Q(2s-5)\left(\prod_{v < \infty \text{ ramified}}{\frac{1}{\zeta_v(2s-8)}} \right) \frac{\Gamma_\R(2s-6)}{\Gamma_\R(2s)} E_0(g,5-s).\]
Thus
\[\frac{E_0^*(g,5-s)}{E_0^*(g,s)} = D_B^{8-2s} \left(\prod_{v < \infty \text{ ram}}{\frac{\zeta_v(2s-8)}{\zeta_v(8-2s)}}\right) \frac{\Gamma_\R(2-2s)\Gamma_\R(2s)}{\Gamma_\R(2s-6)\Gamma_\R(8-2s)}.\]
But $\zeta_v(s)/\zeta_v(-s) = -p^s$, and $\Gamma_\R(2z) \Gamma_\R(2-2z) = \frac{1}{\sin(\pi z)}$.
Thus the above fraction is
\[D_B^{8-2s}\left( \prod_{v < \infty \text{ ram}}{-p^{2s-8}}\right) \frac{\sin(\pi(s-3))}{\sin(\pi s)} = 1\]
since $B$ is ramified at an even number of places.  This completes the proof of Theorem \ref{SpherEis}.

\subsection{Differential Operators}
In this section we analyze the normalized Eisenstein series $E^*_{2r}(g,s)$ that is used in the Rankin-Selberg integral for a Siegel modular form on $\GSp_6$ of even weight $2r$.  In all that follows, $r \geq 0$ is an integer.  This Eisenstein series is defined as follows.  At finite places $p < \infty$, define $f_p^*(g,s)$ the normalized section as above, so that $f_p^*(K_p,s) = \zeta_p(2s)\zeta_p(2s-4)$ if $p$ is ramified for $B$, and $f_p^*(K_p,s) = \zeta_p(2s)\zeta_p(2s-2)\zeta_p(2s-4)$ if $p$ is split for $B$.  At infinity, the normalized section is defined as follows.  Define $f_{\infty,2r}$ to be the unique element of $Ind_{P(\R)}^{\GG(\R)}(\chi_s)$ that satisfies $f_{\infty,2r}(k,s) = j(k,i)^{2r}$ for all $k \in K_\infty$.  Then the normalized section 
\[f_{\infty,2r}^*(g,s) = \Gamma_\R(2s+2r)\Gamma_\R(2s+2r-4)\Gamma_\R(2s+2r-8)f_{\infty,2r}(g,s).\]
Finally, 
\[E_{2r}^*(g,s)= D_B^s \sum_{\gamma \in P(\Q) \backslash G(\Q)}{f_{2r}^*(\gamma g,s)},\]
where $f_{2r}^*(g,s) = f_{\infty,2r}^*(g,s)\prod_{v < \infty}{f_v^*(g,s)}$.  Thus when $r=0$, we recover the spherical normalized Eisenstein series.

We will use (essentially) the Maass-Shimura differential operators \cite[\S 19]{maass}, \cite{shimuraDiff} to obtain the desired analytic properties of $E_{2r}^*(g,s)$ from $E_{0}^*(g,s)$, as stated in the following theorem.
\begin{theorem}\label{DE2r} There is an element $\mathcal D$ of $\mathcal{U}(\mathfrak g)\otimes \C$, the complexified universal enveloping algebra of the Lie algebra of $\GG(\R)$, such that $\mathcal{D} E_{2r}^*(g,s) = E_{2r+2}^*(g,s)$.  Consequently, $E_{2r}^*(g,s) = E_{2r}^*(g,5-s)$ and $E_{2r}^*(g,s)$ has at worst prescribed poles at the integral and half-integral points in the interval $[0,5]$. \end{theorem}

The proof will work as follows.  First, recall that $j(g,i) = \langle r(i) g^{-1}, f \rangle$, so that $j( tg,i) = t^{-1}j(g,i)$ if $t \in \GL_1$ denotes, by slight abuse of notation, the element of $\GG$ that acts as multiplication by the scalar $t$ on $W$.  Define $J(g,i) = \nu(g) j(g,i) = \langle r(i), f g \rangle$, and for $v \in W(\R)$, recall that $\Phi_{\infty,2r}(v) = e^{-||v||^2}\langle r(i), v \rangle^{2r}$.  Then
\[\Phi_{\infty,2r}(fg) = e^{-|J(g,i)|^2}J(g,i)^{2r}.\]
One has
\begin{equation}\label{fintDef}f_{\infty,2r}^*(g,s) = \pi^{-(3s+3r-6)}\Gamma(s+r-2)\Gamma(s+r-4)|\nu(g)|^s \int_{\GL_1(\R)}{\Phi_{2r}(tfg)|t|^{2s}\,dt}.\end{equation}
Indeed, the function of $g$ on the right-hand side of (\ref{fintDef}) is in the correct induction space, has the correct transformation with respect to $K_\infty$, and at $g=1$ is $\pi^{-(3s+3r-6)}\Gamma(s+r)\Gamma(s+r-2)\Gamma(s+r-4)$ since the inner integral at $g=1$ is $\Gamma(s+r)$.  Hence this $f_{\infty,2r}^*$ agrees with normalized section used in the body of the paper.  In different notation,
\[f_{\infty,2r}^*(g,s) = \pi^{-(3s+3r-6)}\Gamma(s+r)\Gamma(s+r-2)\Gamma(s+r-4)|\nu(g)|^s \frac{J(g,i)^{2r}}{|J(g,i)|^{2s+2r}}.\]
To compute the action of $\mathcal D$ on $f_{\infty,2r}^*(g,s)$ and thus $E_{2r}^*(g,s)$, we compute it on the function of $g$ $g \mapsto \Phi_{\infty,2r}(f g)$.  We will find
\begin{equation}\label{DPhi2r}\mathcal{D}\Phi_{\infty,2r}(fg) = \pi^{-3}e^{- |J(g,i)|^2}\left\{|J(g,i)|^4 -9 |J(g,i)|^2+15\right\}J(g,i)^{2r+2}. \end{equation}
Now since
\[\int_{\GL_1(\R)}{e^{- t^2|J(g,i)|^2}|J(g,i)|^{2v}J(g,i)^{2r+2}|t|^{2s+2r+2+2v}\,dt} = \Gamma(s+r+1+v) \frac{J(g,i)^{2r+2}}{|J(g,i)|^{2s+2r+2}},\]
(\ref{DPhi2r}) implies
\begin{align*} \int_{\GL_1(\R)}{\mathcal{D}\Phi_{\infty,2r}(tg)|t|^{2s}\,dt} &= \pi^{-3}\left\{\Gamma(s+r+3)-9\Gamma(s+r+2)+15\Gamma(s+r+1)\right\}\frac{J(g,i)^{2r+2}}{|J(g,i)|^{2s+2r+2}} \\ &= \pi^{-3}\left\{ (s+r+2)(s+r+1)-9(s+r+1)+15 \right\}\Gamma(s+r+1) \\ & \qquad \qquad \times \frac{J(g,i)^{2r+2}}{|J(g,i)|^{2s+2r+2}} \\ &= \pi^{-3}(s+r-2)(s+r-4)\Gamma(s+r+1) \frac{J(g,i)^{2r+2}}{|J(g,i)|^{2s+2r+2}}. \end{align*}
Thus Theorem \ref{DE2r} follows as soon as we establish equation (\ref{DPhi2r}).

Let us now define $\mathcal{D}$ and prove the identity (\ref{DPhi2r}).  Set $h = n(i)\overline{n}(\frac{i}{2}) \in \GG(\C)$. Then $r(i) h = e$ and $r(-i) h = -8i f$.  Denote by $\overline{\mathfrak{u}_P}(\R)$ the Lie algebra of the unipotent subgroup of $\GG(\R)$ consisting of the $\overline{n}(X)$, for $X \in J_\R$, with $J = H_3(B)$.  We will define $\mathcal{D}_0$ in $\mathcal{U}(\overline{\mathfrak{u}_P}(\R)) \simeq Sym(J_\R)$, the universal enveloping algebra of $\overline{\mathfrak{u}_P}(\R)$, and then $\mathcal{D}$ will be given by $(2\pi i)^{-3} h \mathcal{D}_0 h^{-1}$.  Here we identify $J$ with $\overline{\mathfrak{u}_P}$ via the logarithm $\overline{L}$ of the map $\overline{n}$.  One has
\[(a,b,c,d) \overline{L}(X) = (\tr(b,X),c \times X, dX,0).\]

To define $\mathcal D_0$, we first rewrite the formula for the norm of an element of $J$ in coordinates.  For $a \in B_\R$, denote by $a(1), a(i), a(j), a(k)$ the real numbers defined by the equality $a = a(1) + a(i)i + a(j)j + a(k)k$.  If
\[x = \left(\begin{array}{ccc} c_1 & a_3 & a_2^* \\ a_3^* & c_2 & a_1 \\ a_2 & a_1^* & c_3 \end{array} \right),\]
then
\begin{align}\label{N(x)}N(x) =&\, c_1 c_2 c_3 - \left(\sum_{i=1,2,3}{\sum_{u \in \{1,i,j,k\}}{c_i a_i(u)^2}}\right) + 2a_1(1)a_2(1)a_3(1)\\ &\, - 2\left( \sum_{i=1,2,3}{\sum_{u \in \{i,j,k\}}{a_i(1)a_{i+1}(u)a_{i+2}(u)}}\right) - 2 \det(a_j(u)). \nonumber \end{align} 
Here 
\[(a_j(u)) = \left(\begin{array}{ccc} a_1(i) & a_2(i) & a_3(i) \\ a_1(j) & a_2(j) & a_3(j) \\ a_1(k) & a_2(k) & a_3(k) \end{array}\right).\]

View the coordinates $c_i, a_j(u)$ on the right hand side of (\ref{N(x)}) as elements of $J^\vee$, the dual of $J$, so that the expression becomes an element of $Sym^3(J^\vee)$.  The trace form $\tr( \;,\;)$ gives an identification $J \simeq J^\vee$, and thus the norm gives us an element $\mathcal D_0$ of $Sym^3(J) \subseteq \mathcal{U}(\overline{\mathfrak{u}_P}) \subseteq \mathcal{U}(\mathfrak g)$.  This is the differential operator $\mathcal D_0$ defined above.  More explicitly, recall that $e_{ij}$ denotes the element of $M_3(B)$ with a $1$ at position $i,j$ and zeroes elsewhere.  Define $v_1(u) = u e_{23} + u^*e_{32}$, and similarly for $v_2(u)$, $v_3(u)$.  Then under the identification of $J^\vee$ with $J$, $c_i$ becomes $e_{ii}$ and $a_i(u)$ becomes $v_i(u)/2$. Hence
\begin{align}\label{D_0} \mathcal D_0 =&\, e_{11}e_{22}e_{33} - \frac{1}{4}\left( \sum_{1 \leq i \leq 3, u \in \{1,i,j,k\}}{e_{ii}v_i(u)^2}\right) + \frac{1}{4}v_1(1)v_2(1)v_3(1)\\ &\, - \frac{1}{4}\left( \sum_{1 \leq i \leq 3, u \in \{i,j,k\}}{v_i(1)v_{i+1}(u)v_{i+2}(u)}\right) - \frac{1}{4} \det(v_j(u)). \nonumber \end{align} 

Now $\mathcal D$ is a product of terms of the form $D(x) := h \overline{L}(-x) h^{-1}$, for $x \in J$.  Let us compute the action of such a term on $\langle r(i), f g \rangle $ and $\langle r(-i), f g \rangle$.
\begin{lemma} $D(x) \langle r(i), f g \rangle  = 0$.  $D(x) \langle r(-i), f g \rangle  = -8i\langle (0,0,x,0), f g h \rangle$. \end{lemma}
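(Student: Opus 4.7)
The plan is to interpret $D(x)=h\,\overline{L}(-x)\,h^{-1}\in \mathcal{U}(\mathfrak{g})\otimes \C$ as a left-invariant differential operator on $\GG(\R)$, acting on a smooth function $F$ by
\[
D(x)F(g) \;=\; \frac{d}{dt}\bigg|_{t=0} F\bigl(g\,h\,\overline{n}(-tx)\,h^{-1}\bigr),
\]
since $\exp(t\,\operatorname{Ad}(h)\overline{L}(-x)) = h\,\overline{n}(-tx)\,h^{-1}$. Apply this to $F(g)=\langle r(\pm i),\, fg\rangle$. Because $h=n(i)\,\overline{n}(i/2)$ and both $n(\cdot)$ and $\overline{n}(\cdot)$ have similitude $1$ by Lemma \ref{n(X)}, one has $\nu(h)=1$; similitude-invariance of the symplectic form then lets me translate both entries by $h$ on the right, giving
\[
D(x)\langle r(\pm i),\, fg\rangle \;=\; \frac{d}{dt}\bigg|_{t=0}\langle r(\pm i)\,h,\; fgh\,\overline{n}(-tx)\rangle.
\]
At this stage the two given identities $r(i)h=e$ and $r(-i)h=-8if$ substitute into the first slot and the argument splits into two short parallel computations.

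For the first identity I would apply the same invariance trick once more, this time multiplying both entries by $\overline{n}(tx)$ on the right, to push the $t$-dependence onto the $e$-side. Since $e=(1,0,0,0)$ has $b=c=d=0$, the explicit formula for the right action of $\overline{n}(Y)$ on $W$ immediately gives $e\,\overline{n}(Y)=e$ for every $Y\in H_3(B)$. The pairing therefore collapses to the $t$-independent quantity $\langle e,\,fgh\rangle$, whose derivative is zero.

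For the second identity the calculation reduces to evaluating
\[
-8i\,\frac{d}{dt}\bigg|_{t=0}\langle f,\; fgh\,\overline{n}(-tx)\rangle.
\]
I would write $fgh=(a,b,c,d)$ and expand $fgh\,\overline{n}(-tx)$ to first order in $t$ via the defining formula for $\overline{n}$; only the $a$-coordinate survives the pairing with $f=(0,0,0,1)$ since $\langle f,(a',b',c',d')\rangle=-a'$, and its first-order variation is $-t\tr(b,x)$, so the derivative comes out to $\tr(b,x)$. A one-line check against the definition of $\langle\,,\,\rangle$ shows $\langle(0,0,x,0),(a,b,c,d)\rangle=\tr(x,b)$, which matches and gives the claimed identity with the constant $-8i$ tracked through.

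The main bookkeeping hurdle is keeping straight the direction of the various right actions and consistently invoking $\nu(h)=1$; once the $\operatorname{Ad}(h)$-translation has been used to re-express $D(x)$ in terms of $\overline{n}(-tx)$, the lemma is immediate from the explicit formulas for $\overline{n}$ and for the symplectic form already in hand.
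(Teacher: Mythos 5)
Your proposal is correct and follows essentially the same route as the paper: interpret $D(x)$ as the left-invariant operator given by $g\mapsto g\,h\,e^{t\overline{L}(-x)}h^{-1}$, use $\nu(h)=1$ and the similitude-invariance of $\langle\,,\,\rangle$ to reduce to $r(i)h=e$ and $r(-i)h=-8if$, and then expand to first order in $t$ (the paper pushes $e^{t\overline{L}(x)}$ onto the first slot in both cases, while you expand $fgh\,\overline{n}(-tx)$ in the second slot for the second identity, which is the same computation). No gaps.
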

\begin{proof} We have 
\[\langle r(i), f g he^{t\overline{L}(-x)}h^{-1} \rangle = \langle e e^{t\overline{L}(x)}, f g h \rangle = \langle e, f g h \rangle\]
is independent of $t$, proving the first claim.  Similarly,
\[\langle r(-i), f g he^{t\overline{L}(-x)}h^{-1} \rangle = -8i\langle f e^{t\overline{L}(x)}, f g h \rangle = -8i\langle f, f gh \rangle - 8i t \langle (0,0,x,0), f g h \rangle + O(t^2) \]
proving the second statement. \end{proof}
We now compute $D(z)D(y)D(x) e^{-|J(g,i)|^2}$.  First, since $|J(g,i)|^2 = \langle r(i), f g \rangle \langle r(-i), f g \rangle$,
\[D(x) e^{-|J(g,i)|^2} = e^{-|J(g,i)|^2}J(g,i)(8i \langle (0,0,x,0), f g h \rangle) = e^{-|J(g,i)|^2}\langle (0,0,x,0), v \rangle\]
where $v = 8i J(g,i) f g h$. Now, we have $D(y) \langle (0,0,x,0), f g h \rangle = \langle (0, x\times y, 0,0), fg h \rangle. $  Continuing, we thus get
\[D(y) D(x) e^{-|J(g,i)|^2} = e^{-|J(g,i)|^2}\left\{\langle (0,0,y,0), v \rangle \langle(0,0,x,0), v \rangle + \langle (0, x\times y, 0,0), v \rangle\right\}.\]
Finally, 
\[D(z)D(y)D(x) e^{-|J(g,i)|^2} = e^{-|J(g,i)|^2}\left(D_1(x,y,z)(v) + D_2(x,y,z)(v) + D_3(x,y,z)(v)\right),\]
where, for $x,y,z \in J$ and $w \in W$,
\begin{align*} D_1(x,y,z)(w) &= \langle (0,0,x,0),w \rangle \langle (0,0,y,0),w \rangle \langle (0,0,z,0),w \rangle, \\ D_2(x,y,z)(w) &=
\langle (0,0,x,0), w \rangle \langle (0,y \times z,0,0), w \rangle + \langle (0,0,y,0), w \rangle \langle (0,z \times x,0,0), w \rangle \\ & \qquad + \langle (0,0,z,0), w \rangle \langle (0,x\times y,0,0), w \rangle, \\ D_3(x,y,z)(w) &= \langle (\tr(z,x\times y),0,0,0), w \rangle. \end{align*}

\begin{proposition} Write $\mathcal D_0 = \sum{\alpha \, x y z}$, $\alpha \in \R$, $x,y,z \in J$, as abstract expression for the right hand side of (\ref{D_0}).  Suppose $w = (a,b,c,d)$. Then
\[\sum{\alpha D_1(x,y,z)(w)} =  \det(b); \; \; \; \sum{\alpha D_2(x,y,z)(w)} = -3\tr(b,c); \; \; \; \sum{\alpha D_3(x,y,z)(w)} = 15 d.\]
\end{proposition}
\begin{proof} This is a straightforward but tedious calculation, so we omit it. \end{proof}
Recall that $v = 8i J(g,i) f g h =:(a,b,c,d)$.  We obtain
\[(8i)^{-1} h(- \mathcal D_0 )h^{-1} e^{-|J(g,i)|^2} = (8i)^{-1}e^{-|J(g,i)|^2} \left(\det(b) - 3 \tr(b,c) + 15 d\right).\] 
But now,
\[a = \langle -f, v \rangle = J(g,i) \langle r(-i),f g \rangle = |J(g,i)|^2\]
and
\[d = \langle e, v \rangle = 8i J(g,i) \langle r(i), f g \rangle = 8i J(g,i)^2.\]
Since $v=(a,b,c,d)$ is rank one, $\det(b) = a^2 d$ and $\tr(b,c) = 3ad$.  Thus
\[\det(b) - 3\tr(b,c) + 15 d  = 8i \left\{|J(g,i)|^4 J(g,i)^2 - 9 |J(g,i)|^2 J(g,i)^2 + 15 J(g,i)^2 \right\}.\]
Hence 
\[(2\pi i)^{-3} h \mathcal D_0 h^{-1}   e^{-|J(g,i)|^2} =\pi^{-3} \left\{|J(g,i)|^4 - 9 |J(g,i)|^2 + 15 \right\}J(g,i)^2 e^{-|J(g,i)|^2}.\]
Since all the differential operators act trivially on the $J(g,i)$, we get 
\[\mathcal{D}\left( J(g,i)^{2r}  e^{-|J(g,i)|^2} \right)= \pi^{-3}\left\{|J(g,i)|^4 - 9 |J(g,i)|^2 + 15 \right\}J(g,i)^{2r+2} e^{-|J(g,i)|^2},\]
proving equation (\ref{DPhi2r}), and thus Theorem \ref{DE2r}.

\appendix

\section{Relation with $\GU_6(B)$}\label{GU6}
As above, we assume our ground field $F$ is of characteristic zero, and $B$ denotes a quaternion algebra over $F$.  To emphasize the relationship to the quaternion algebra $B$, we write $\GG_B = \GG$ for the group of linear automorphisms of $W_B = F \oplus H_3(B) \oplus H_3(B) \oplus F$ that preserve the symplectic and quartic form, up to similitude.  Set $J_6$, as usual, to be the matrix $\mm{}{1_3}{-1_3}{}$.  Define $\GU_6(B)$ to be the pairs $(g, \nu) \in \GL_6(B) \times \GL_1$ that satisfy $g J_6 g^* = \nu(g) J_6$.  The group $\GG_B$ is closely related to $\GU_6(B)$.  In this section, we work out an explicit relationship.  The results of this section are not needed in the proof of Theorem \ref{MainThm}; we have included this section only to help orient the reader.

The explicit relationship between $\GG_B$ and $\GU_6(B)$ takes the following form.  We define a reductive $F$-group $\GS$, that satisfies the following two properties.
\begin{itemize}
\item There are natural, explicit maps $\GS \rightarrow \GG_B$ and $\GS \rightarrow \GU_6(B)$, that are surjective on $\overline{F}$ points, where $\overline{F}$ is an algebraic closure of $F$.
\item The kernels of the two maps from $\GS$ are (distinct) $\mu_2$'s in the center of $\GS$.\end{itemize}
To define $\GS$, we proceed as follows.  First, we make an octonion algebra $\Theta$ out of $B$ using the Cayley-Dickson construction.  Denote by $W_\Theta = F \oplus H_3(\Theta) \oplus H_3(\Theta) \oplus F$ the Freudenthal space made out of the cubic Jordan algebra of Hermitian $3 \times 3$ matrices over $\Theta$.  Next, we define a decomposition $W_\Theta = W_B \oplus W_6 \otimes B$.  This decomposition induces an action of $\GG_B \times \GU_6(B)$ on $W_\Theta$, with $\GG_B$ acting via its defining representation on $W_B$ and as the identity on $W_6 \otimes B$, and vice versa for $\GU_6(B)$.  We define $\GS$ to be the set of pairs $(g,h) \in \GG_B \times \GU_6(B)$ that preserve the symplectic and quartic form on $W_\Theta$, up to similitude.  The natural maps $\GS \rightarrow \GG_B$ and $\GS \rightarrow \GU_6(B)$ are then the projections.

\subsection{The Cayley-Dickson construction} Fix a $\gamma \in \GL_1(F)$.  Then the octonion algebra $\Theta$ is defined to be the pairs $(x,y)$, $x, y$ in $B$, with addition defined component-wise, and multiplication as
\[ (x_1 , y_1) (x_2, y_2) = (x_1 x_2 + \gamma y_2^* y_1, y_2 x_1 + y_1 x_2^*).\]
The conjugation on $\Theta$ is $(x,y)^* = (x^*, -y)$.  If $z = (x,y)$ is in $\Theta$, then $\tr(z):= z+z^* = \tr(x)$ and $n(z) = z z^* = n(x) - \gamma n(y)$.  The choice $\gamma = -1$ is usually made to define the positive-definite octonion algebra.

Recall that the multiplication in $\Theta$ is \emph{not} associative, but that one does have the identity $\tr((z_1 z_2) z_3) = \tr(z_1 (z_2 z_3))$ for $z_1, z_2, z_3 \in \Theta$.

\subsection{The space $W_\Theta$.} The map $B \rightarrow \Theta$, $b \mapsto (b,0)$, induces an obvious inclusion $W_B \rightarrow W_\Theta$.  We define $\iota: W_6 \otimes B \rightarrow W_\Theta$ as follows.  Suppose $w \in W_6 \otimes B$ is $b_1 e_1 + b_2 e_2 + b_3 e_3 + c_1 f_1 + c_2 f_2 + c_3 f_3$, with $b_i, c_j \in B$.  Then $\iota(w)$ is defined to be $(0,B,C,0) \in W_\Theta$, with
\begin{equation}\label{BC} B = \left(\begin{array}{ccc} & (0,-b_3) & (0,b_2) \\ (0,b_3) & & (0,-b_1) \\ (0,-b_2) & (0,b_1) & \end{array}\right) \; \text{ and } \; C = \left(\begin{array}{ccc} & (0,c_3) & (0,-c_2) \\ (0,-c_3) & & (0,c_1) \\ (0,c_2) & (0,-c_1) & \end{array}\right).\end{equation}
(The difference in minus signs is intentional.)  It is clear that $W_\Theta = W_B \oplus \iota(W_6 \otimes B)$.

We now list some useful formulas.  Denote by $\langle \quad , \quad \rangle_B$ the $B$-valued symplectic-Hermitian form on $W_6 \otimes B$ preserved by $\GU_6(B)$.  That is, if $w = \sum_i{b_i e_i} + \sum_i{c_i f_i}$, and $w' = \sum_i{b'_i e_i} + \sum_i{c'_i f_i}$, then $\langle w, w' \rangle_B = \sum_{i}{b_i (c_i')^*} - \sum_i{c_i (b_i')^*}$.
\begin{itemize}
\item With $B, C$ as in (\ref{BC}), $\tr(B,C) = \gamma\left( (b_1,c_1) + (b_2,c_2) + (b_3,c_3)\right).$
\item The symplectic form on $W_\Theta$ restricts to the form $ (-\gamma) \langle \, , \, \rangle \otimes (\, , \, )$ on $W_6 \otimes B$.  That is, if $w = \sum_i{b_i e_i} + \sum_i{c_i f_i}$, and $w' = \sum_i{b'_i e_i} + \sum_i{c'_i f_i}$, then 
\begin{align}\label{ww'} \langle \iota(w), \iota(w') \rangle  &= -\gamma \left( \sum_i{(b_i,c'_i)} - \sum_i{(c_i, b'_i)}\right)\nonumber \\ &= -\gamma \tr(\langle w, w' \rangle_B). \end{align}
\item If
\[Y = \left(\begin{array}{ccc} & (0,y_3) & (0,-y_2) \\ (0,-y_3) & & (0,y_1) \\ (0,y_2) & (0,-y_1) & \end{array}\right),\]
then
\[Y^\# = \gamma \left( \begin{array}{c} y_1^* \\ y_2^* \\ y_3^* \end{array}\right) \left(\begin{array}{ccc} y_1 & y_2 & y_3 \end{array}\right) = \gamma \left(\begin{array}{ccc} n(y_1) & y_1^* y_2 & y_1^* y_3 \\ y_2^* y_1 & n(y_2) & y_2^* y_3 \\ y_3^* y_1 & y_3^* y_2  & n(y_3) \end{array}\right).\]
\item $Q(\iota(w)) = -\gamma^2 N( \langle w, w \rangle_B)$.  Indeed, suppose $w = (0,B,C,0)$, with $B, C$ as in (\ref{BC}).  Define $v_b = (b_1, b_2, b_3)$ and define $v_c$ similarly.  Then
\begin{align*} \gamma^{-2} Q(\iota(w)) &= \gamma^{-2}\tr(B,C)^2 - 4 \gamma^{-2} \tr(B^\#, C^\#) \\ &= \left(\sum_i{(b_i,c_i)}\right)^2 - 4\tr(v_b^* v_b, v_c^* v_c) \\ &= \tr( v_b v_c^*)^2 - 4 n( v_b v_c^*) \\ &= -n(v_b v_c^* - v_c v_b^*) \\ &= - n(\langle w, w \rangle_B). \end{align*}
\item For $b = (b_1, b_2, b_3)$ and $c = (c_1,c_2,c_3)$ in $B^3$, let us write $\iota_e(b) = B$, $\iota_f(c) = C$, with $B, C$ as in (\ref{BC}).  Then if $X, Y$ in $J = H_3(B)$, 
\begin{equation}\label{bX} X \times \iota_e(b) = \iota_f( b X)\; \text{ and } \; Y \times \iota_f(c) = \iota_e(cY).\end{equation}
\item Suppose $v \in W_B$, $\delta \in \iota(W_6 \otimes B)$.  Then $(v,v,v,\delta) = 0$ and $(v,\delta,\delta,\delta) = 0$.  Suppose $v = (a,b,c,d)$ and $\delta = (0,B,C,0)$. Then $(v,v,\delta,\delta)$ is proportional to (by a nonzero constant)
\begin{equation}\label{quad} \tr(B,C)\left(ad - \tr(b,c)\right) + \tr(b \times B, c \times C) + 2\tr(B^\#,c^\# - d b) + 2\tr(C^\#, b^\# - ac).\end{equation}
\end{itemize}

\subsection{The group $\GS$} We now have enough formulae to easily prove the facts about $\GS$ claimed above.  First note that by (\ref{ww'}), $(g,h) \in \GG_B \times \GU_6(B)$ preserves the symplectic form on $W_\Theta$ if and only if $\nu(g) = \nu(h)$.  

Now we construct some explicit elements in $\GS$. First, if $X \in H_3(B)$, then one computes $n_\Theta(X) = (n_B(X), \mm{1}{X}{}{1})$, and thus $n_\Theta(X)$ is an element of $\GS$.  The equality $n_\Theta(X) = (n_B(X), \mm{1}{X}{}{1})$ follows from (\ref{bX}).  Next, one verifies immediately that the following are all elements of $\GS$:
\begin{itemize}
\item $J_\Theta = (J_B, \mm{}{1_3}{-1_3}{})$.  Here $J_B, J_\Theta$ act on $W_B$, respectively $W_\Theta$, as $(a,b,c,d) \mapsto (-d,c,-b,a)$.
\item $m_\Theta(\lambda) = (m_B(\lambda),\mm{\lambda}{}{}{1})$.  Here $m_B(\lambda), m_\Theta(\lambda)$ act on $W_B$, respectively $W_\Theta$, as $(a,b,c,d) \mapsto (\lambda^2 a,\lambda b,c,\lambda^{-1}d)$.
\item $z_\Theta(\lambda) = (z_B(\lambda),\mm{\lambda}{}{}{\lambda})$.  Here $z_B(\lambda), z_\Theta(\lambda)$ act on $W_B$, respectively $W_\Theta$, as $(a,b,c,d) \mapsto \lambda (a,b,c,d)$. \end{itemize}
Now suppose $m \in \GL_3(B)$, and $N(m^* m)$ is a square in $F^\times$, say $N(m^* m ) = \delta^2$.  Associated to such a pair $(m,\delta)$, we construct an element $(M(m,\delta), \mm{m}{}{}{\,^*m^{-1}})$ in $\GS$.  To do this, define $M(m, \delta)$ by
\[(a,b,c,d) M(m, \delta) = (\delta a, \delta m^{-1} b \,^*m^{-1}, \delta^{-1} m^* c m, \delta^{-1} d).\]
If $b \in H_3(B)$, then $N( \delta m^{-1} b \,^*m^{-1}) = \delta^3 N(m^* m)^{-1} N(b) = \delta N(b)$, and hence $M(m, \delta)$ defines an element of $\GG^1_B$.  To check that $(M(m,\delta), \mm{m}{}{}{\,^*m^{-1}})$ defines an element of $\GS$, we must see that the map
\[ X + \iota_f(c) \mapsto \delta^{-1} m^* X m + \iota_f(c \,^*m^{-1})\]
on $H_3(\Theta)$ multiples the norm by $\delta^{-1}$.  But indeed, $N(X + \iota_f(c)) = N(X) + \tr(X, \iota_f(c)^\#)$, and then
\begin{align*} N(\delta^{-1} m^* X m + \iota_f(c \,^*m^{-1})) & = N(\delta^{-1} m^* X m) + \tr( \delta^{-1} m^* X m, \iota_f(c \,^*m^{-1})^\#) \\ &= \delta^{-1}N(X) + \delta^{-1} \tr(m^* X m,  m^{-1} v_c^* v_c \,^*m^{-1}) \\ &= \delta^{-1} N(X) + \delta^{-1} \tr(X, v_c^* v_c) \\ &= \delta^{-1} N(X + \iota_f(c)).\end{align*}
The explicit constructions now prove that $\GS \rightarrow \GG_B$ and $\GS \rightarrow \GU_6(B)$ are surjections on $\overline{F}$-points.

Let us now prove that the kernels of the maps $\GS \rightarrow \GG_B$ and $\GS \rightarrow \GU_6(B)$ are $\mu_2$'s.  First, one checks that easily that $\mu_2 \times \mu_2 \subseteq \GS$ via $(\pm 1, \pm 1)$, with the $\pm 1$'s acting by scalar multiplication on $W_B$ and $W_6 \otimes B$.  Hence a $\mu_2$ is contained in the each of the kernels.  Now for the converse.

First consider the map $\GS \rightarrow \GG_B$, and suppose $(1,h)$ is in the kernel.  Then $\nu(h) = 1$, and the element $(1,h)$ of $\GG_\Theta$ fixes $(1,0,0,0)$ and $(0,0,0,1)$.  It follows that it preserves the spaces $(0,*,0,0)$ and $(0,0,*,0)$, and thus $h = \mm{m}{}{}{\,^*m^{-1}}$ for some $m \in \GL_3(B)$.  Furthermore, we must have that $N(X + \iota_e(b)) = N(X+ \iota_e(b m))$ for all $X$ in $H_3(B)$ and $b \in B^3$, and thus
\[\tr(X, \iota_e(bm)^\#) = \tr(X, m^* v_b^* v_b m) = \tr(m X m^*, v_b^* v_b)\]
is preserved for all $b, X$.  Since the rank one elements of the form $v_b^* v_b$ span $H_3(B)$, we have $m X m^* = X$ for all $X \in H_3(B)$, and thus $m = \pm 1$, as desired.

Now consider the map $\GS \rightarrow \GU_6(B)$, and suppose $(g,1)$ is in the kernel.  Again we have $\nu(g) = 1$.  We will show $g = \pm 1 $ by considering the fact that $(g,1)$ preserves $(v,v,\delta, \delta)$.  By taking $C = 0$ and letting $B$ vary in (\ref{quad}), we see $g$ fixes the quantity $c^\# - db$, since the rank one elements $B^\# = v_b^* v_b$ span $H_3(B)$.  Similarly, by taking $B= 0$ in (\ref{quad}), we see $g$ fixes the quantity $b^\# - ac$.

Set $f =(0,0,0,1)$, and suppose $f g = (A,B,C,D)$, $(a,b,c,d)g = (a',b',c',d')$.  Then for any $\lambda \in F$ we get
\begin{align*} b^\# - ac &= (b' + \lambda B)^\# - (a' + \lambda A)(c' + \lambda C) \\ &= (b')^\# - a'c' + \lambda(b' \times B - a' C - A c'), \end{align*}
and thus $b' \times B - a' C - A c' = 0$.  By varying $(a,b,c,d)$, we can make $a', b', c'$ arbitrary.  It follows that $A = B = C = 0$.  Similarly, since $g$ fixes $c^\# - db$, $g$ preserves the line $F (1,0,0,0)$.  Hence, $(1,0,0,0) g = \lambda (1,0,0,0)$,  and $(0,0,0,1) g = \lambda^{-1}(0,0,0,1)$ for some $\lambda \in F^\times$.  It follows that $g$ preserves the spaces $(0,*,0,0)$ and $(0,0,*,0)$ in $W_\Theta$ as well, and scales the norm on $H_3(\Theta) = (0,*,0,0)$ by $\lambda$.  Write $X \mapsto t(X)$ for the action on $g$ on $H_3(B) = (0,*,0,0)$ in $W_B$.  Then we have $\tr(t(X),v_b^* v_b) = \lambda \tr(X, v_b^* v_b)$ for all $X \in H_3(B)$, $b \in B^3$.  Hence $t(X) = \lambda X$.  But since $t$ also scales the norm by $\lambda$, we get $\lambda^3 = \lambda$, and hence $\lambda = \pm 1$, as desired.  This completes the proof of the facts about $\GS$.

\section{The magic triangle of Deligne and Gross}\label{magic}
In this section we very briefly describe the magic triangle of Deligne and Gross \cite{dG}, and its mysterious apparent connection to some Rankin-Selberg integrals. This possible connection appears to be closely related to the so-called \emph{towers} of Rankin-Selberg integrals introduced by Ginzburg-Rallis \cite{ginzRallis}.  That there may be a connection between the Freudenthal magic square and Rankin-Selberg integrals is discussed, from a different point of view, in Bump's survey article \cite{bumpRS}.

The magic triangle is an extension of the Freudenthal magic square.  Roughly speaking, each row of the triangle contains a sequence of groups $H_1 \subseteq H_2 \subseteq H_3 \subseteq ....$, each group included in the next.  And the groups in the same row all share similar properties.  The final row of the triangle is the exceptional series of Deligne \cite{deligne1}, \cite{deligneDeMan}:
\begin{equation}\label{adjoint} A_1 \subset A_2 \subset G_2 \subset D_4 \subset F_4 \subset E_6 \subset E_7 \subset E_8.\end{equation}
The penultimate row of the triangle is the one most relevant for this paper:
\begin{equation}\label{Freudenthal} \mathbf{G}_m \subset A_1 \subset A_1^3 \subset C_3 \subset A_5 \subset D_6 \subset E_7.\end{equation}
Each group in the triangle has a preferred representation.  Many of the groups also have a preferred parabolic subgroup, obtained by taking the stabilizer of a highest line in the preferred representation.  In row (\ref{adjoint}), the preferred representation is the adjoint representation, and the preferred parabolic subgroup is a Heisenberg parabolic.  In row (\ref{Freudenthal}), the preferred representation is given by Freudenthal's space $F \oplus J \oplus J \oplus F$, where $J$ is a certain cubic Jordan algebra that corresponds to the group in question.  The preferred parabolic subgroup is the stabilizer of a rank one line in this space, which is a Siegel parabolic.  So, the group $D_6$ in row (\ref{Freudenthal}) is the group $\GG$ from the main body of the paper, its preferred representation being $W$ and the preferred parabolic subgroup being the Siegel parabolic $P$, the parabolic used to define the Eisenstein series in the integral representation.

One can find many Rankin-Selberg integrals by applying the following procedure.  Take two groups $H_i \subset H_j$ or $H_j \subset H_i$ in the same row.  Take a cusp form $\phi_i$ on $H_i(\A)$, and an Eisenstein series $E_j(g,s)$ corresponding to the preferred parabolic subgroup on $H_j(\A)$.  Then one gets a global Rankin-Selberg convolution
\begin{equation} \label{RS1} \int_{H_i(\Q)\backslash H_i(\A)}{\phi_i(g) E_j(g,s)\,dg}\end{equation}
if $H_i \subset H_j$ or
\begin{equation}\label{RS2}\int_{H_j(\Q)\backslash H_j(\A)}{\phi_i(g) E_j(g,s)\,dg}\end{equation}
if $H_j \subset H_i$.  Sometimes one must add in some additional unipotent integration to make the Rankin-Selberg convolution produce an $L$-function.  Of course, considering the inclusion $C_3 \subset D_6$ yields the integral in this paper, whereas the inclusion $A_1^3 \subset C_3$ yields the integral of \cite{pollackShah}, if one puts the cusp form on $C_3$ and the Eisenstein series on $A_1^3$.  If one instead puts the cusp form on $A_1^3$ and the Eisenstein series on $C_3$, one gets the triple product $L$-function of Garrett \cite{garrett} and Piatetski-Shapiro--Rallis \cite{psrRankin3}.  The inclusion $A_1 \subset C_3$ yields the Whittaker integral of Bump-Ginzburg \cite{bg} for the Spin $L$-function on $\GSp_6$, except now one must add a unipotent group to the $A_1$ here.

The integral of Furusawa for the degree eight $L$-function on $\GSp_4 \times \GU(1,1)$ is not technically present in the row (\ref{Freudenthal}), but one can add it there, as follows.  The groups, their preferred representations, and their preferred parabolic subgroups in this row all come from the Freudenthal construction.  The Freudenthal construction can be made for a general cubic Jordan algebra $J$.  For example, taking $J$ to be $H_3(B)$ yields the group $\GG$ of type $D_6$, whereas taking $J$ to be $Sym_3(\Q)$, the $3 \times 3$ symmetric matrices over $\Q$, yields the group $\GSp_6$.  If $E$ is a quadratic imaginary extension of $\Q$, taking $J = H_3(E)$, the three-by-three Hermitian matrices over $E$, yields the group $A_5$ from (\ref{Freudenthal}), which is essentially $\GU(3,3)$.  If one now takes $J = \Q \oplus Sym_2(\Q)$, then the Freudenthal construction produces a group $H_J$ closely related to $\GU(1,1) \times \GSp_4$.  Thus, one can think of the inclusion $H_J \subset \GU(3,3)$ as sitting in the row (\ref{Freudenthal}) as well, and this gives the integral of Furusawa.

Amazingly, at least in the row (\ref{Freudenthal}), the magic triangle also helps to predict the $L$-function realized by the integrals (\ref{RS1}) and (\ref{RS2}).  The groups in row (\ref{Freudenthal}) appear as Levi subgroups of the preferred parabolic subgroup of a corresponding group from row (\ref{adjoint}).  In the case of $C_3$, the inclusion is into the Levi of the Heisenberg parabolic subgroup of the group $F_4$ from the exceptional series.  The Spin $L$-function on $C_3$ appears in the constant term of a cuspidal Eisenstein series for this parabolic on $F_4$.

While we have mentioned some Rankin-Selberg integrals that seem to be connected to the row (\ref{Freudenthal}), one can find more Rankin-Selberg integrals in the literature that appear to be connected to this row, and to the other rows of the triangle.  We also mention that the magic triangle appears both in the literature on dual pairs and the minimal representations of exceptional groups, see Rumelhart \cite{rumelhart}, and in recent works on arithmetic invariant theory, see Bhargava-Ho \cite{bhargavaHo}.
\bibliography{integralRepnBibNEW} 
\bibliographystyle{abbrv}
\end{document}